\pgfplotsset{compat=newest}
\colorlet{color1}{blue}
\colorlet{color2}{red!50!black}
\definecolor{ivory}{RGB}{218,215,203}
\definecolor{cuhkp}{RGB}{98,56,105} 	
\definecolor{cuhkpl}{RGB}{152,24,147} 	
\definecolor{cuhkb}{RGB}{219,160,1} 	
\definecolor{cuhkbd}{RGB}{178,129,0} 	
\definecolor{cuhkr}{RGB}{88,35,155}  	
\definecolor{darkblue}{rgb}{0,0.08,0.45} 
\crefname{section}{section}{sections}
\crefname{subsection}{subsection}{subsections}
\Crefname{figure}{Figure}{Figures}
\theoremstyle{plain}
\newtheorem{theorem}{Theorem}[section]
\newtheorem{lemma}{Lemma}[section]
\newtheorem{corollary}{Corollary}[section]
\newtheorem{proposition}{Proposition}[section]
\newtheorem{assumption}{Assumption}[section]
\newtheorem{fact}{Fact}[section]
\theoremstyle{definition}
\newcommand{\R}{\mathbb{R}}
\newcommand{\N}{\mathbb{N}}
\newcommand{\dist}{\operatorname{dist}}
\newcommand{\sign}{\operatorname{sign}}
\newcommand{\proj}{\operatorname{proj}}
\newcommand{\sB}{{\sf B}}
\newcommand{\be}{\begin{equation}}
\newcommand{\ee}{\end{equation}}
\newcommand\domain[1]{\mathrm{dom\\}(#1)}
\newcommand\diag{\mathrm{diag}}
\newcommand{\calO}{\mathcal{O}}
\newcommand{\setI}{\mathbb{I}}
\newcommand{\setN}{\mathbb{N}}
\newcommand{\setR}{\mathbb{R}}
\newcommand{\setX}{\mathbb{X}}
\newcommand{\en}{\begin{equation*}}
\newcommand{\een}{\end{equation*}}
\title{Randomized Coordinate Subgradient Method \\ for Nonsmooth Composite Optimization\thanks{L. Zhao is also with Xiangfu Laboratory, Jiashan, China. D. Zhu is also with School of Data Science, The Chinese University of Hong Kong, Shenzhen. L. Zhao was partially supported by the National Natural Science Foundation of China (NSFC) under Grant No. 72293580 and 72293582. X. Li  was partially supported by the National Natural Science Foundation of China (NSFC) under Grant No. 12201534 and by Shenzhen Science and Technology Program under Grant No. RCBS20210609103708017.}}
\author{%
  \hspace{-1.3cm} Lei Zhao\\
  \hspace{-1.3cm} Institute of Translational Medicine \\
  \hspace{-1.3cm} National Center for Translational Medicine\\
  \hspace{-1.3cm} Shanghai Jiao Tong University\\
  \hspace{-1.3cm} Shanghai, China 200240 \\
  \hspace{-1.3cm} \texttt{l.zhao@sjtu.edu.cn} \\
  \And
  \hspace{0.3cm} Ding Chen\\
  \hspace{0.3cm} Alibaba Group\\
  \hspace{0.3cm} Hangzhou, China 310000 \\
  \hspace{0.3cm} \texttt{ppposchen@gmail.com} \\
  \AND
  Daoli Zhu\\
  Antai College of Economics and Management \\
  Shanghai Jiao Tong University\\
  Shanghai, China 200030 \\
  \texttt{dlzhu@sjtu.edu.cn} \\
  \And
  Xiao Li\\
  School of Data Science\\
  The Chinese University of Hong Kong, Shenzhen\\
  Shenzhen, China 518172\\
  \texttt{lixiao@cuhk.edu.cn} \\
}
\begin{document}

\maketitle
\begin{abstract}
  Coordinate-type subgradient methods for addressing nonsmooth optimization problems are relatively underexplored due to the set-valued nature of the subdifferential. In this work, our study focuses on nonsmooth composite optimization problems, encompassing a wide class of convex and weakly convex (nonconvex nonsmooth) problems. By utilizing the chain rule of the composite structure properly, we introduce the Randomized Coordinate Subgradient method (RCS) for tackling this problem class. To the best of our knowledge, this is \emph{the first} coordinate subgradient method for solving general nonsmooth composite optimization problems. In theory, we consider the linearly bounded subgradients assumption for the objective function, which is more general than the traditional Lipschitz continuity assumption, to account for practical scenarios. We then conduct convergence analysis for RCS in both convex and weakly convex cases based on this generalized Lipschitz-type assumption. Specifically, we establish the $\widetilde \calO (1/\sqrt{k})$ convergence rate in expectation and the $\tilde o(1/\sqrt{k})$ almost sure asymptotic convergence rate in terms of the suboptimality gap when $f$ is convex, where $k$ represents iteration count. If $f$ further satisfies the global quadratic growth condition, the improved $\calO(1/k)$ rate is derived in terms of the squared distance to the optimal solution set.   For the case when $f$ is weakly convex and its subdifferential satisfies the global metric subregularity property, we derive the $\mathcal{O}(\varepsilon^{-4})$ iteration complexity in expectation, where $\varepsilon$ is the target accuracy. We also establish an asymptotic convergence result. To justify the global metric subregularity property utilized in the analysis,  we establish this error bound condition for the concrete (real-valued) robust phase retrieval problem. We also provide a convergence lemma and the relationship between the global metric subregularity properties of a weakly convex function and its Moreau envelope. These results are of independent interest. Finally, we conduct several experiments to demonstrate the possible superiority of RCS over the subgradient method.
\end{abstract}

\section{Introduction}\label{sec:introduction}
Coordinate-type methods tackle optimization problems by successively solving simpler (even scalar) subproblems along coordinate directions. These methods are widely used in signal processing, machine learning, and other engineering fields due to their simplicity and efficiency, see, e.g.,
\cite{Wright2015,kerahroodi2017coordinate,vandaele2016efficient,Tseng}. Coordinate methods are particularly useful when the problem size (measured as the dimension of the variable) is enormous and the computation of function values or full gradients can exhaust memory, or when the problem data is incrementally received or distributed across a network, requiring computation with whatever data is currently available. For more motivational statements, we refer to \cite{Wright2015, Nesterov, RT}.
\subsection{Motivations, problem setup, and examples}\label{sec:motivation and problem}
Existing works in this field mainly focus on extending gradient-based methods  (e.g., the gradient descent, proximal gradient descent, and primal-dual methods) to coordinate methods; see, e.g., \cite{Nesterov,beck2013convergence,LuXiao,RT,Wright2015,ZhuZhao2020,LiuWright1} and the references therein. In contrast, a parallel line of research on implementing a coordinate-type extension for the \emph{subgradient oracle} is relatively unexplored and the related works in this line are surprisingly limited (to the best of our knowledge). The primary challenge in designing a coordinate-type subgradient method for general nonsmooth optimization problems stems from the \emph{set-valued nature} of the subdifferential. Specifically, the Cartesian product of coordinate-wise subgradients may not necessarily give rise to a complete subgradient, rendering the convergence of such a method unclear in both practical implementation and theoretical analysis. Despite this general difficulty, our goal is to design a coordinate-type subgradient method for a class of \emph{structured} nonsmooth optimization problems.

In this work, we consider the \emph{composite optimization} problem
\begin{equation}\label{eq:opt problem}
\min_{x\in \R^{d}} \ f(x)=h(\Phi(x)),
\end{equation}
where the outer function $h:\R^n \to \R$ is possibly nonsmooth and the inner function $\Phi:\R^d\to \R^n$ is a smooth mapping. In addition, the objective function $f:\R^d\to \R$ is  $\rho$-weakly convex (i.e., $f+\frac{\rho}{2} \|\cdot\|^2$ is convex for some $\rho\geq 0$). When $\rho = 0$, $f$ reduces to a convex function.

The composite form \eqref{eq:opt problem} is a common source of nonsmooth convex and weakly convex functions; see, e.g., \cite{lewis2016proximal,duchi2018stochastic,davis2019stochastic}. It covers many signal processing and machine learning problems as special instances. We list three of them below. One can easily put the three examples into the composite form \eqref{eq:opt problem}, while the weak convexity of $f$ in these examples comes from \emph{one} of the following two cases:
\begin{enumerate}[label=\textup{\textrm{(C.\arabic*)}},topsep=0pt,itemsep=0.2ex,partopsep=0ex, leftmargin = 1.2cm]
\item \label{C1} The outer function $h$ is convex and $L_h$-Lipschitz continuous and the derivative of the inner smooth mapping $\Phi$ (presented by the $n\times d$ Jacobian matrix $\nabla\Phi$) is $L_{\Phi}$-Lipschitz continuous. In this case, $f$ is $L_hL_{\Phi}$-weakly convex according to \cite[Lemma 4.2]{DruPaq19}.
\item \label{C2} The outer function $h$ is $\rho_h$-weakly convex and $L_h$-Lipschitz continuous and the inner smooth mapping $\Phi$ is $L_{\Phi}^0$-Lipschitz continuous and its Jacobian $\nabla \Phi$ is $L_{\Phi}$-Lipschitz continuous. In this case, we can establish that $f$ is  $\left(\rho_h(L_{\Phi}^0)^2+L_hL_{\Phi}\right)$-weakly convex; see~\Cref{lemma:wcvx_composite}.
\end{enumerate}

{\sf Application 1: Robust M-estimators.}
The robust M-estimator problem is defined as \cite{LohWai15}:
\be\label{eq:M-esti}
\min_{x\in\R^d}\ f(x) := \ell(Ax-b)+\sum_{i=1}^d\phi_{p}(x_i),
\ee
where $A\in\R^{n\times d}$ is a matrix, $b\in\R^n$ is a vector, $\ell$ is a Lipschitz continuous loss function like $\ell_1$-norm,  MCP~\cite{MCP} and SCAD~\cite{SCAD}, and $\phi_{p}$ is a Lipschitz continuous penalty function such as $\ell_1$-norm, MCP, SCAD, etc. In this problem, let us define $\Phi: \R^d \to \R^n\times\R^d$ by $\Phi(x)=\left(Ax-b, x\right)$ and $h:\R^n\times\R^d\rightarrow\R$ by $h(y,z)=\ell(y)+\sum_{i=1}^d\phi_{p}(z_i)$. Then, the weak convexity of $f$ follows from the case \ref{C2}. In this way, we have placed problem \eqref{eq:M-esti} in the composite form \eqref{eq:opt problem}.

{\sf Application 2:  SVM.} Support vector machine (SVM) is a popular supervised learning method, which is widely used for classification. The linear SVM problem can be expressed as~\cite{cortes1995, LSVM2}
\be\label{eq:svm}
\min_{x\in\R^d}\ f(x) := \frac{1}{n}\sum_{i=1}^n\max\left\{0,1-b_i(a_i^{\top}x)\right\}+\frac{p}{2}\|x\|^2,
\ee
where $p>0$, $a_i\in\R^d$, and $b_i\in\{\pm1\}$, $i=1,...,n$. Let $A=\left(a_1^\top,...,a_n^\top\right)\in\R^{n\times d}$, $b=\left(b_1,...,b_n\right)\in\R^n$,  $\R^{n\times d}\ni\tilde{A}=\diag(b) A$, and $1_n$ be a $n$-dimensional vector of all $1$s. We define $\Phi:\R^d \to \R^n\times \R$ by $\Phi(x)=(1_n-\tilde{A}x, \frac{p}{2}\|x\|^2)$ and $h:\R^n\times \R \to \R$ by $h(y,z)=\frac{1}{n}\sum_{i=1}^n\max\{0,y_i\} + z$. Furthermore, the weak convexity of $f$ follows from the fact that the problem is even convex (i.e., $\rho =0$). Then, one can see that problem \eqref{eq:svm} has the composite form \eqref{eq:opt problem}.

{\sf Application 3: (Real-valued) Robust phase retrieval problem.}  Phase retrieval is a common computational problem with applications in diverse areas such as physics, imaging science, X-ray crystallography, and signal processing \cite{fienup1982phase}. The (real-valued) robust phase retrieval problem amounts to solving \cite{RobustPhase1}
\be\label{eq:pr}
\min_{x\in\R^d}\ f(x) := \frac{1}{n}\|(Ax)^{\circ2}-b^{\circ2}\|_1,
\ee
where $A\in\R^{n\times d}$, $b\in\R^n$, and $\circ2$ is the Hadamard power. It is obvious that problem \eqref{eq:pr} has the composite form in \eqref{eq:opt problem} by setting $\Phi(x)=(Ax)^{\circ2}-b^{\circ2}$ and $h(\cdot)=\frac{1}{n}\|\cdot\|_1$ and realizing that $f$ is weakly convex due to the case \ref{C1}.

It is interesting to observe that $f$ in problem \eqref{eq:M-esti} is Lipschitz continuous but  can be weakly convex due to SCAD and MCP loss function or penalty.  $f$ in problem \eqref{eq:svm} is convex. However, it is not Lipschitz continuous due to the weight decay term $\|x\|^2$. $f$ in problem \eqref{eq:pr} is simultaneously weakly convex and non-Lipschitz continuous. In this work, we aim at designing a coordinate-type subgradient method for solving the nonsmooth composite optimization problem \eqref{eq:opt problem}. Then,  we derive its convergence theory, which covers the weakly convex and/or non-Lipschitz problems such as Applications 1--3.


\subsection{Main contributions}
Our main contributions are summarized as follows.
\begin{enumerate}[label=\textup{\textrm{(\alph*)}},topsep=0pt,itemsep=0ex,partopsep=0ex]
\item Despite the fundamental bottleneck of designing a coordinate-type subgradient method for general nonsmooth optimization problems, we focus on the structured composite problem \eqref{eq:opt problem}. The key insight lies in that this composite structure admits the chain rule subdifferential calculus, which allows to separate the derivative of the smooth mapping $\Phi$ and the subdifferential of the outer nonsmooth function $h$. Consequently, such a structured subdifferential presents a natural coordinate separation. Based on this insight, we design a Randomized Coordinate Subgradient method (RCS) (see \Cref{alg:RCS}) for solving problem \eqref{eq:opt problem}. To our knowledge, RCS is \emph{the first} coordinate-type subgradient method for tackling general nonsmooth composite optimization problems.

\item Motivated by our examples that the traditional Lipschitz continuity assumption on the objective function $f$ can be stringent, we employ the more general linearly bounded subgradients assumption (see \Cref{assump1}) in our analysis. When $f$ is convex (i.e., $\rho = 0$) and under the linearly bounded subgradients assumption: $\bullet$) We derive the $\widetilde \calO(1/\sqrt{k})$ convergence rate in expectation in terms of the suboptimality gap (see \Cref{theo:rate_cvx}). $\bullet$) We show $\calO(1/k)$  convergence rate in expectation with respect to the squared distance between the iterate and the optimal solution set under an additional global quadratic growth condition (see \Cref{cor:rate_qg}). $\bullet$) We establish the almost sure asymptotic convergence result (see \Cref{theo:convergence_cvx}) and the almost sure $\tilde o(1/\sqrt{k})$  asymptotic convergence rate result (see \Cref{cor:asymptoticrate_cvx}) in terms of the suboptimality gap, which are valid for each single run of RCS with probability 1. One important step for deriving these convergence results is to establish the boundedness of the iterates in expectation so that the technical difficulty introduced by the more general linearly bounded subgradients assumption can be tackled by using diminishing step sizes. We refer to \Cref{sec:cvx} for more details.
	
\item When $f$ is weakly convex (i.e., $\rho>0$) and under the linearly bounded subgradients assumption: $\bullet$) We provide the  iteration complexity of $\calO(\varepsilon^{-4})$ in expectation for driving the Moreau envelope gradient below $\varepsilon$ (see \Cref{theo:rate_wcvx}). $\bullet$) We derive the almost sure asymptotic convergence  result (see \Cref{theo:convergence_wcvx}) and the almost sure $\calO(1/k^{-\frac14})$ asymptotic convergence rate result (in the sense of liminf; see \Cref{cor:asymptoticrate_wcvx}) in terms of the Moreau envelope gradient.  Our analysis for weakly convex case deviates from the standard one \cite{davis2019stochastic} due to the more general \Cref{assump1}. Our key idea is to utilize an error bound condition (the global metric subregularity) to derive the approximate descent property of RCS (see \Cref{lemma:inequality_iter_wcvx}). Towards establishing the desired results, we derive a convergence lemma for mapping (see \Cref{lemma:lemma4in84}) and the relationship between the global metric subregularity properties of a weakly convex function and its Moreau envelope (see \Cref{cor:me_ms}), which are of independent interests.
	
\item We establish that the (real-valued) robust phase retrieval problem \eqref{eq:pr} satisfies the global metric subregularity property under a mild condition (see \Cref{cor:pr}). This finding validates the key assumption we utilized for analyzing RCS in the weakly convex case for this specific problem. This established error bound condition is expected to be useful for analyzing other robust phase retrieval algorithms.
\end{enumerate}

Finally, we remark that if we set the number of blocks $N=1$ in RCS, i.e., the algorithm reduces to the subgradient method,  the above convergence results yield new results for the subgradient method under the  linearly bounded subgradients assumption.

\subsection{Related works}\label{sec:related works}
\emph{The coordinate-type subgradient methods.} To the best of our knowledge, the exploration of extending the subgradient oracle into the coordinate regime remains relatively underexplored due to the set-valued nature of the subdifferential. In \cite{nesterov2014subgradient}, Nesterov proposed a randomized coordinate extension of the subgradient method for solving convex linear composite optimization problems. The convex linear composition structure is a special instance of our problem \eqref{eq:opt problem}.  The reported convergence rate result was achieved using the impractical Polyak's step size rule. The incorporation of randomization is crucial for connecting the analysis of their algorithm with the traditional subgradient method.  The work \cite{dang2015stochastic} follows the same randomization idea and establishes a similar convergence rate result for their coordinate-type subgradient method applied to general nonsmooth convex optimization problems. However, their algorithm needs to calculate the full subgradient at each iteration, and then randomly select one block of the calculated subgradient for the coordinate update, in order to avoid the intrinsic obstacle induced by the set-valued nature of the subdifferential; see the last paragraph of \cite[Section 1]{dang2015stochastic}. This approach negates the inherent advantage of coordinate-type methods over their full counterparts.  In theory, the results in \cite{nesterov2014subgradient,dang2015stochastic} only apply to convex and Lipschitz continuous optimization problems. Unfortunately, this limitation excludes  our three examples outlined in \Cref{sec:motivation and problem}. By contrast, 1) our proposed RCS applies to the more general composite problem class \eqref{eq:opt problem} and 2) our theoretical findings extend to non-Lipschitz and weakly convex cases.

\emph{Analysis under generalized Lipschitz-type assumptions.} The linearly bounded subgradients assumption dates back to \cite{CohenZ}, in which Cohen and Zhu showed the global convergence of the subgradient method for nonsmooth convex optimization. This assumption was later imposed in \cite{culioli1990} to analyze stochastic subgradient method for stochastic convex optimization. However, both works concern the global convergence property while no explicit rate result is reported.

Apart from the assumption we used, there are several other directions for generalizing the theory of  subgradient-type methods to non-Lipschitz convex optimization. In \cite{lu2019}, the author proposed the concept of relative continuity of the objective function, which is  to impose some relaxed bound on the subgradients by using Bregman distance. Then, the author invoked this Bregman distance in the algorithmic design. The resultant mirror descent-type algorithm has a different structure from the classic subgradient method.  Similar idea was later used in \cite{zhou2020} for analyzing online convex optimization problem. The works \cite{renegar2016,grimmer2018} transform the original non-Lipschitz objective function to a radial reformulation, which introduces a new decision variable.  Then, their algorithms are based on alternatively updating the original variable (by a subgradient step) and the new variable.  In \cite{grimmer2019},  a growth condition rather than Lipschitz continuity of $f$ was utilized to analyze the normalized subgradient method. The author obtained asymptotic convergence rate result in terms of the minimum of the suboptimality gap through a rather quick argument that originates in Shor's analysis \cite[Theorem 2.1]{shor2012minimization}. This convergence result finally applies to any convex function that only needs to be locally Lipschitz continuous.

The common feature of the above mentioned works lies in that they all consider convex optimization problems.   By contrast, our work studies both convex and nonconvex cases.

\subsection{Notation}
The notation in this paper is mostly standard.  Throughout this work, we use $\setX^*$ to denote the set of optimal solutions of~\eqref{eq:opt problem}. We assume $\setX^*\neq\emptyset$ without loss of generality. For any $x^*\in\setX^*$, we denote $f^*=f(x^*)$. If problem \eqref{eq:opt problem} is nonconvex, the set of its critical points is denoted by $\overline \setX$.
Note that the indices $i(k)$, $k=0,1,2,\ldots$ generated in RCS are random variables. Thus, RCS generates a stochastic process $\{x^k\}_{k\geq 0}$. Throughout this work, let $(\Omega,\mathcal F,\{\mathcal F_k\}_{k\geq 0},\mathbb{P})$ be a filtered probability space and let us assume that the sequence of iterates $\{x^k\}_{k\geq 0}$ is adapted to the filtration $\{\mathcal F_k\}_{k\geq 0}$, i.e., each of the random vectors $x^k : \Omega \to \R^d$ is $\mathcal F_k$-measurable, which is automatically true once we set
$
\mathcal{F}_{k}=\sigma(i(0),i(1),\ldots,i(k))
$. We use $\mathbb{E}_{i(k)}$ and $\mathbb{E}_{\mathcal{F}_{k}}$ to denote the  expectations taken over the random variable $i(k)$ and the filtration $\mathcal F_k$, respectively. We $\widetilde \calO$ and $\tilde o$ to denote $\calO$ and $o$ with hidden log terms, respectively.

\section{Preliminaries and A Generalized Lipschitz-type Assumption}


\subsection{Convexity, weak convexity, and subdifferential}\label{sec:cvx-wcvx-subgrad}
For a convex function $\psi:\R^d\rightarrow\R$, a vector $v\in \R^d$ is called a {subgradient} of $\psi$ at point $x$ if the subgradient inequality
$
\psi(y)\geq\psi(x)+\langle v,y-x\rangle
$
holds for all $y\in\R^d$.
The set of all subgradients of $\psi$ at $x$ is called the subdifferential of $\psi$ at $x$, denoted by $\partial\psi(x)$.

A function $\varphi:\R^d\rightarrow\R$ is $\rho$-weakly convex if
$\psi = \varphi  + \frac{\rho}{2}\|\cdot\|^2$ is convex. For such a $\rho$-weakly convex function $\varphi$, its subdifferential is given by \cite[Proposition 4.6]{Vial83}
\[
\partial\varphi(x)=\partial \psi(x)-\rho x,
\]
where $\partial \psi$ is the convex subdifferential defined above.
Additionally, it is well known that the $\rho$-weak convexity of $\varphi$ is equivalent to \cite[Proposition 4.8]{Vial83}
\begin{equation}\label{eq:wcvx inequality}
\varphi(y)\geq\varphi(x)+\langle\nu,y-x\rangle-\frac{\rho}{2}\|x-y\|^2
\end{equation}
 for all $x,y\in \R^d$ and $\nu \in \partial \varphi (x)$.

In the following lemma, we establish that the composite case \ref{C2} gives rise to the weak convexity of $f$.

\begin{lemma}\label{lemma:wcvx_composite} Suppose that the function $h:\R^n\rightarrow\R$ is $\rho_h$-weakly convex and $L_h$-Lipschitz continuous and the mapping $\Phi:\R^d\rightarrow\R^n$ is smooth, $L_{\Phi}^0$-Lipschitz continuous, and its Jacobian $\nabla\Phi$ is $L_{\Phi}$-Lipschitz continuous. Then, the composite function $f(x)=h(\Phi(x))$ is $\left(\rho_h(L_{\Phi}^0)^2+L_hL_{\Phi}\right)$-weakly convex.
\end{lemma}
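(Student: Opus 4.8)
The plan is to verify the weak-convexity inequality \eqref{eq:wcvx inequality} directly for $f(x)=h(\Phi(x))$, using the characterization that a function is $\rho$-weakly convex if and only if it satisfies that inequality for all subgradients. Fix $x,y\in\R^d$ and a subgradient $\nu\in\partial f(x)$. By the chain rule for the composition of a Lipschitz (weakly convex) outer function with a smooth mapping, $\nu=\nabla\Phi(x)^{\top}w$ for some $w\in\partial h(\Phi(x))$. The goal is to lower-bound $f(y)=h(\Phi(y))$ in terms of $f(x)+\langle\nu,y-x\rangle$ minus a quadratic error term with the claimed constant.

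First I would apply the weak convexity of $h$ at the points $\Phi(x)$ and $\Phi(y)$ with the subgradient $w$, which gives
\[
h(\Phi(y))\;\geq\; h(\Phi(x))+\langle w,\Phi(y)-\Phi(x)\rangle-\frac{\rho_h}{2}\,\|\Phi(y)-\Phi(x)\|^2.
\]
Next I would handle the linear term $\langle w,\Phi(y)-\Phi(x)\rangle$ by comparing $\Phi(y)-\Phi(x)$ with its first-order approximation $\nabla\Phi(x)(y-x)$. Since $\nabla\Phi$ is $L_{\Phi}$-Lipschitz, the standard descent-type estimate for smooth mappings yields $\|\Phi(y)-\Phi(x)-\nabla\Phi(x)(y-x)\|\leq\frac{L_{\Phi}}{2}\|y-x\|^2$; combined with $\|w\|\leq L_h$ (Lipschitz continuity of $h$) and Cauchy--Schwarz, this shows
\[
\langle w,\Phi(y)-\Phi(x)\rangle\;\geq\;\langle w,\nabla\Phi(x)(y-x)\rangle-\frac{L_hL_{\Phi}}{2}\|y-x\|^2
\;=\;\langle\nu,y-x\rangle-\frac{L_hL_{\Phi}}{2}\|y-x\|^2.
\]
Finally, for the quadratic term I would use the $L_{\Phi}^0$-Lipschitz continuity of $\Phi$ to bound $\|\Phi(y)-\Phi(x)\|^2\leq (L_{\Phi}^0)^2\|y-x\|^2$, so that $-\frac{\rho_h}{2}\|\Phi(y)-\Phi(x)\|^2\geq-\frac{\rho_h(L_{\Phi}^0)^2}{2}\|y-x\|^2$. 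Assembling the three estimates gives
\[
f(y)\;\geq\; f(x)+\langle\nu,y-x\rangle-\frac{\rho_h(L_{\Phi}^0)^2+L_hL_{\Phi}}{2}\,\|y-x\|^2,
\]
which is exactly \eqref{eq:wcvx inequality} with $\rho=\rho_h(L_{\Phi}^0)^2+L_hL_{\Phi}$, and hence $f$ is weakly convex with that modulus.

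I do not expect a serious obstacle here; the one point requiring mild care is the chain rule $\partial f(x)\subseteq\nabla\Phi(x)^{\top}\partial h(\Phi(x))$, which must be invoked in the regularity-free form valid for a Lipschitz (Clarke-regular, being weakly convex) outer function composed with a $C^1$ mapping — this is the version of \cite[Lemma 4.2]{DruPaq19} adapted to weakly convex $h$. Given that, the rest is the routine splitting of $\langle w,\Phi(y)-\Phi(x)\rangle$ into a linearized part (absorbed into $\langle\nu,y-x\rangle$) and a second-order remainder, together with the two elementary Lipschitz bounds. One should also note the inequality must hold for \emph{every} $\nu\in\partial f(x)$, so it is important that $w$ above is an arbitrary element of $\partial h(\Phi(x))$ producing $\nu$, which the argument accommodates.
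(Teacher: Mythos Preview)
Your proposal is correct and follows essentially the same route as the paper's proof: apply the weak-convexity inequality for $h$ at $\Phi(x),\Phi(y)$, split $\langle w,\Phi(y)-\Phi(x)\rangle$ into the linearized part $\langle \nabla\Phi(x)^{\top}w,y-x\rangle$ plus a remainder bounded via the $L_\Phi$-Lipschitz Jacobian and $\|w\|\le L_h$, and control the quadratic term using $L_\Phi^0$-Lipschitzness of $\Phi$. Your explicit mention of the chain-rule inclusion $\partial f(x)\subseteq\nabla\Phi(x)^{\top}\partial h(\Phi(x))$ is the only (minor) addition; the paper relies on the equality form stated in \eqref{eq:subdifferential} without restating it inside the proof.
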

\begin{proof}
For any $\zeta\in\partial h(\Phi(x))$, by the $\rho_h$-weak convexity of $h$ and \eqref{eq:wcvx inequality}, we have
\be\label{eq:wcvx_composite_1}
\begin{aligned}
h(\Phi(y))-h(\Phi(x))&\geq\langle\zeta,\Phi(y)-\Phi(x)\rangle-\frac{\rho_h}{2}\|\Phi(x)-\Phi(y)\|^2\\
&\geq\langle\nabla \Phi(x)^{\top}\zeta,y-x\rangle-\frac{\rho_h}{2}\|\Phi(x)-\Phi(y)\|^2+\langle\zeta, \Phi(y)-\Phi(x)-\nabla \Phi(x)(y-x)\rangle\\
&\geq\langle\nabla \Phi(x)^{\top}\zeta,y-x\rangle-\frac{\rho_h}{2}\|\Phi(x)-\Phi(y)\|^2-\|\zeta\|\cdot\|\Phi(y)-\Phi(x)-\nabla \Phi(x)(y-x)\|.
\end{aligned}
\ee
Since the Jacobian $\nabla\Phi$ is $L_{\Phi}$-Lipschitz continuous, we have that $\|\Phi(y)-\Phi(x)-\nabla \Phi(x)(y-x)\|\leq\frac{L_{\Phi}}{2}\|x-y\|^2$. By the $L_{\Phi}^0$-Lipschitz of $\Phi$, we obtain that $\|\Phi(x)-\Phi(y)\|^2\leq(L_{\Phi}^0)^2\|x-y\|^2$. By the $L_h$-Lipschitz of $h$, we obtain that $\|\zeta\|\leq L_h$. Finally, invoking these bounds into \eqref{eq:wcvx_composite_1} gives
\[
\begin{aligned}
h(\Phi(y))\geq h(\Phi(x))+\langle\nabla \Phi(x)^{\top}\zeta,y-x\rangle-\frac{\rho_h(L_{\Phi}^0)^2+L_hL_{\Phi}}{2}\|x-y\|^2,
\end{aligned}
\]
which yields the desired result.
\end{proof}
\subsection{Moreau envelope of weakly convex function}\label{subsec:me_wcvx}
If $f$ is $\rho$-weakly convex, proper, and lower semicontinuous, then the Moreau envelope function $f_{\lambda}(x)$ and the proximal mapping $prox_{\lambda,f}(x)$ are defined as \cite{Rockafellar}:
\begin{eqnarray}
	&&f_{\lambda}(x):=\min_{y}\left\{f(y)+\frac{1}{2\lambda}\|y-x\|^2\right\},\label{eq:ME}\\
	&&prox_{\lambda,f}(x):=\arg\min_{y}\left\{f(y)+\frac{1}{2\lambda}\|y-x\|^2\right\}.\label{eq:PM}
\end{eqnarray}
As a standard requirement on the regularization parameter $\lambda$, we will always use $\lambda <\frac{1}{\rho}$ in the sequel to ensure that $\nabla f_{\lambda}$ of a $\rho$-weakly convex function $f$  is well defined.

We collect a series of important and known properties of the Moreau envelope  in the following proposition; see, e.g., \cite[Propositions 1-4]{zhudenglizhao2021}.
\begin{proposition}\label{prop:wcvxf_me} Suppose that $f$ is a $\rho$-weakly convex function.   The following assertions hold:
	\begin{enumerate}[label=\textup{\textrm{(\alph*)}},topsep=0pt,itemsep=0ex,partopsep=0ex]
		\item $prox_{\lambda,f}(x)$ is well defined, single-valued, and Lipschitz continuous.
		\item $f_{\lambda}(x)\leq f(x)-\frac{1-\lambda\rho}{2\lambda}\|x-prox_{\lambda,f}(x)\|^2$.
		\item $\lambda \dist(0,\partial f(prox_{\lambda,f}(x)))\leq\|x-prox_{\lambda,f}(x)\|\leq\frac{\lambda}{1-\lambda\rho}\dist(0,\partial f(x))$.
		\item $\nabla f_{\lambda}(x)=\frac{1}{\lambda}(x-prox_{\lambda,f}(x))$ and it is Lipschitz continuous.
		\item $x=prox_{\lambda,f}(x)$ if and only if $0\in\partial f(x)$.
	\end{enumerate}
\end{proposition}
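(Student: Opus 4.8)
The plan is to derive all five assertions from one structural fact: since $f$ is $\rho$-weakly convex and $\lambda<1/\rho$, the inner objective $g_x(\cdot):=f(\cdot)+\frac{1}{2\lambda}\|\cdot-x\|^2$ can be written as the convex function $\psi=f+\frac{\rho}{2}\|\cdot\|^2$ plus the positive-definite quadratic $\frac{1}{2\lambda}\|\cdot-x\|^2-\frac{\rho}{2}\|\cdot\|^2$, and is therefore $\mu$-strongly convex with $\mu:=\frac{1}{\lambda}-\rho=\frac{1-\lambda\rho}{\lambda}>0$. In particular $g_x$ admits a unique minimizer, which is exactly the well-definedness and single-valuedness of $prox_{\lambda,f}(x)$ claimed in (a); throughout I will abbreviate $p=p(x)=prox_{\lambda,f}(x)$ and record the first-order optimality condition $0\in\partial g_x(p)=\partial f(p)+\frac{1}{\lambda}(p-x)$, equivalently $\frac{1}{\lambda}(x-p)\in\partial f(p)$.

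Assertion (b) is then immediate from strong convexity: for the $\mu$-strongly convex $g_x$ with minimizer $p$ one has $g_x(x)\ge g_x(p)+\frac{\mu}{2}\|x-p\|^2$, and substituting $g_x(x)=f(x)$, $g_x(p)=f_\lambda(x)$, and $\frac{\mu}{2}=\frac{1-\lambda\rho}{2\lambda}$ gives the stated bound. The optimality condition $\frac{1}{\lambda}(x-p)\in\partial f(p)$ yields at once the left inequality of (c), namely $\lambda\,\dist(0,\partial f(p))\le\|x-p\|$, and it also gives (e): if $p=x$ then $0\in\partial f(x)$, and conversely if $0\in\partial f(x)$ then $x$ satisfies the (sufficient, since $g_x$ is convex) optimality condition for $g_x$, hence $x=p$ by uniqueness. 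For the right inequality of (c) I would use the hypomonotonicity of $\partial f$: from $\partial\psi(x)=\partial f(x)+\rho x$ and monotonicity of $\partial\psi$, for any $u\in\partial f(x)$, $w\in\partial f(p)$ we get $\langle u-w,x-p\rangle\ge-\rho\|x-p\|^2$; taking $w=\frac{1}{\lambda}(x-p)$ and $u\in\partial f(x)$ of minimal norm gives $\langle u,x-p\rangle\ge(\frac{1}{\lambda}-\rho)\|x-p\|^2$, and Cauchy--Schwarz produces $\|x-p\|\le\frac{\lambda}{1-\lambda\rho}\dist(0,\partial f(x))$. The same device settles the Lipschitz continuity of $prox_{\lambda,f}$ left open in (a): for $x,x'$ with $p=p(x)$, $p'=p(x')$, feeding $\frac{1}{\lambda}(x-p)\in\partial f(p)$ and $\frac{1}{\lambda}(x'-p')\in\partial f(p')$ into hypomonotonicity and simplifying yields $\langle x-x',p-p'\rangle\ge(1-\lambda\rho)\|p-p'\|^2$, hence $\|p-p'\|\le\frac{1}{1-\lambda\rho}\|x-x'\|$.

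For (d) I would first establish smoothness of $f_\lambda$ by completing the square: $g_x(y)=\psi(y)+\frac{\mu}{2}\bigl\|y-\frac{x}{\lambda\mu}\bigr\|^2+c(x)$ with $c(x)$ an explicit smooth quadratic in $x$, so that $f_\lambda(x)=\psi_{1/\mu}\!\bigl(\frac{x}{\lambda\mu}\bigr)+c(x)$ is the composition of the \emph{convex} Moreau envelope $\psi_{1/\mu}$ with an affine map, plus a smooth quadratic. Classical convex theory gives that $\psi_{1/\mu}$ is $C^1$ with $\mu$-Lipschitz gradient, hence $f_\lambda\in C^1$. The gradient formula then follows either by differentiating this expression directly, or by a Danskin/envelope argument using $\nabla_x g_x(p)=\frac{1}{\lambda}(x-p)$ together with continuity (indeed Lipschitzness, by (a)) of $p(\cdot)$; in either case $\nabla f_\lambda(x)=\frac{1}{\lambda}(x-prox_{\lambda,f}(x))$, whose Lipschitz continuity is inherited from that of $p(\cdot)$.

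The step I expect to require the most care is (d) — rigorously justifying differentiability of $f_\lambda$ along with the gradient identity; leading with the completion-of-square reduction to the convex Moreau envelope cleanly avoids invoking a general envelope theorem, and all the remaining pieces reduce to (strong) convexity inequalities and monotonicity of $\partial\psi$. Of course, since every assertion here is standard, one could alternatively simply cite the indicated references (e.g., \cite{Rockafellar} and \cite[Propositions 1--4]{zhudenglizhao2021}); the sketch above is the self-contained route.
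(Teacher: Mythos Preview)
Your argument is correct in every part: the strong convexity of $g_x$ with modulus $\mu=\frac{1-\lambda\rho}{\lambda}$ gives (a) and (b) directly; the optimality inclusion $\frac{1}{\lambda}(x-p)\in\partial f(p)$ yields (e) and the left half of (c); and the hypomonotonicity computations for the right half of (c) and for the Lipschitz constant $\frac{1}{1-\lambda\rho}$ of the prox are accurate. Your completion-of-the-square reduction for (d) also checks out: one verifies $f_\lambda(x)=\psi_{1/\mu}\!\bigl(\tfrac{x}{\lambda\mu}\bigr)+c(x)$ with $c(x)=\bigl(\tfrac{1}{2\lambda}-\tfrac{1}{2\lambda^2\mu}\bigr)\|x\|^2$, and differentiating (using that the minimizer of the convex envelope at $z=\tfrac{x}{\lambda\mu}$ coincides with $prox_{\lambda,f}(x)$) indeed collapses to $\tfrac{1}{\lambda}(x-p)$.

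The only ``comparison'' to make is that the paper does not prove this proposition at all: it simply records these facts as known and cites \cite[Propositions 1--4]{zhudenglizhao2021}. Your write-up is therefore strictly more informative than what the paper provides, and you already anticipated this in your closing remark. If you want to match the paper, a one-line citation suffices; if you want a self-contained treatment, what you have is fine as is.
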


 We have the following corollaries of \Cref{prop:wcvxf_me}, which show that the weakly convex function and its Moreau envelope mapping have the same set of optimal solutions and stationary points.
\begin{corollary}\label{cor:me}
	Suppose that $f$ is a $\rho$-weakly convex function. Let $\setX_{\lambda}^*$  denote the  set of minimizers of the problem $\min_{x\in\setR^d} f_{\lambda}(x)$.  Then, the following statements hold:
	\begin{enumerate}[label=\textup{\textrm{(\alph*)}},topsep=0pt,itemsep=0ex,partopsep=0ex]
		\item $f_{\lambda}(x)\geq f^*$ for all $ x\in\setR^d$.
		\item $f_{\lambda}(x_{\lambda}^*)=f^*$ for all $x_{\lambda}^*\in\setX_{\lambda}^*$. Consequently, we have $\setX_{\lambda}^*=\setX^*$.
	\end{enumerate}
\end{corollary}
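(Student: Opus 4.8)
The plan is to derive both parts of \Cref{cor:me} directly from \Cref{prop:wcvxf_me}, using essentially only parts (b), (d), and (e) together with the definition of the Moreau envelope.

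First I would prove (a). Fix any $x\in\setR^d$. By the definition of the Moreau envelope in \eqref{eq:ME}, taking $y = prox_{\lambda,f}(x)$ we have $f_{\lambda}(x) = f(prox_{\lambda,f}(x)) + \frac{1}{2\lambda}\|prox_{\lambda,f}(x) - x\|^2 \geq f(prox_{\lambda,f}(x)) \geq f^*$, where the last inequality is just the definition of $f^* = \min_{x} f(x)$. (Alternatively one can take $y = x^*$ for any $x^* \in \setX^*$ in \eqref{eq:ME} and note $f_\lambda(x) \le f^* + \frac{1}{2\lambda}\|x - x^*\|^2$, but for the lower bound the first argument is the clean one.) This gives $f_{\lambda}(x) \geq f^*$ for all $x$.

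Next I would prove (b). Let $x_{\lambda}^* \in \setX_{\lambda}^*$ be a minimizer of $f_\lambda$. Since $f_\lambda$ is differentiable (\Cref{prop:wcvxf_me}(d), using $\lambda < 1/\rho$), the optimality condition is $\nabla f_\lambda(x_\lambda^*) = 0$, i.e. $\frac{1}{\lambda}(x_\lambda^* - prox_{\lambda,f}(x_\lambda^*)) = 0$, so $x_\lambda^* = prox_{\lambda,f}(x_\lambda^*)$. By \Cref{prop:wcvxf_me}(e), this is equivalent to $0 \in \partial f(x_\lambda^*)$; since $f$ is $\rho$-weakly convex, \eqref{eq:wcvx inequality} with $\nu = 0$ gives $f(y) \geq f(x_\lambda^*) - \frac{\rho}{2}\|y - x_\lambda^*\|^2$ for all $y$, which does not immediately yield global optimality — so instead I would argue more directly: since $x_\lambda^* = prox_{\lambda,f}(x_\lambda^*)$, the definition \eqref{eq:ME} gives $f_\lambda(x_\lambda^*) = f(x_\lambda^*) + \frac{1}{2\lambda}\cdot 0 = f(x_\lambda^*) \geq f^*$. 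Combined with part (a) applied at any true minimizer $x^* \in \setX^*$, namely $f^* = f(x^*) \geq f_\lambda(x^*) \geq f_\lambda(x_\lambda^*)$ (the middle inequality from \Cref{prop:wcvxf_me}(b), since $1 - \lambda\rho > 0$ and $\|x^* - prox_{\lambda,f}(x^*)\|^2 \geq 0$, giving $f_\lambda(x^*) \le f(x^*)$, and the last inequality because $x_\lambda^*$ minimizes $f_\lambda$), we get $f_\lambda(x_\lambda^*) \le f^* \le f(x_\lambda^*) = f_\lambda(x_\lambda^*)$, forcing $f_\lambda(x_\lambda^*) = f^*$ and $f(x_\lambda^*) = f^*$. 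Hence $x_\lambda^* \in \setX^*$, so $\setX_\lambda^* \subseteq \setX^*$. For the reverse inclusion, if $x^* \in \setX^*$ then $f^* = f(x^*) \geq f_\lambda(x^*) \geq f^*$ (again by \Cref{prop:wcvxf_me}(b) and part (a)), so $f_\lambda(x^*) = f^*$, and since by part (a) this is the minimal value of $f_\lambda$, we get $x^* \in \setX_\lambda^*$. Therefore $\setX_\lambda^* = \setX^*$.

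I do not expect a genuine obstacle here — this is a packaging of standard Moreau-envelope facts. The only point requiring slight care is making sure the chain $f^* \le f(x_\lambda^*) = f_\lambda(x_\lambda^*) \le f_\lambda(x^*) \le f(x^*) = f^*$ is assembled using the correct items of \Cref{prop:wcvxf_me} (in particular using (b) to get $f_\lambda \le f$ pointwise and (d)--(e) to identify minimizers of $f_\lambda$ with fixed points of the proximal map), and that $\lambda < 1/\rho$ is invoked so that the relevant constants are positive and $\nabla f_\lambda$ is well defined.
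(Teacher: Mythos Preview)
Your proof is correct and follows essentially the same route as the paper: part (a) is identical, and for part (b) both arguments rest on the chain $f^* = f(x^*) \geq f_\lambda(x^*) \geq f_\lambda(x_\lambda^*) \geq f^*$ built from \Cref{prop:wcvxf_me}(b) and part (a). Your detour through \Cref{prop:wcvxf_me}(d)--(e) to identify $x_\lambda^* = prox_{\lambda,f}(x_\lambda^*)$ is valid but unnecessary---the paper gets the set equality $\setX_\lambda^* = \setX^*$ directly from the chain (and your explicit two-inclusion argument is in fact a bit more careful than the paper's one-line conclusion).
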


\begin{proof}
We first show part (a). By the definition of $f_{\lambda}(x)$, we have
\[
f_{\lambda}(x)=\min_{y\in \R^d}\left\{f(y)+\frac{\|x-y\|^2}{2\lambda}\right\}=f(prox_{\lambda,f}(x))+\frac{\|x-prox_{\lambda,f}(x)\|^2}{2\lambda}\geq f(prox_{\lambda,f}(x))\geq f^*.
\]
	We now show part (b). By argument (b) of~\Cref{prop:wcvxf_me}, we have for all $ x^*\in\setX^*$ and  $x_{\lambda}^*\in\setX_{\lambda}^*$ that
	\[
	f^*=f(x^*)\geq f_{\lambda}(x^*)\geq f_{\lambda}(x_{\lambda}^*) \geq f^*,
	\]
	where the first inequality follows from argument (b) of~\Cref{prop:wcvxf_me} and the last inequality is due to part (a) of this corollary.
	The above inequality yields that $f^*= f(x^*) = f_{\lambda}(x_{\lambda}^*)$ for all $ x^*\in\setX^*$ and  $x_{\lambda}^*\in\setX_{\lambda}^*$. Thus, we have $\setX^*=\setX_{\lambda}^*$.
\end{proof}

\begin{corollary}\label{cor:mecritical} Suppose that $f$ is a $\rho$-weakly convex function.   Let  $\overline{\setX}_{\lambda}$ be the set of critical points of the problem  $\min_{x\in\setR^d} f_{\lambda}(x)$.  Then, we have  $\overline{\setX}=\overline{\setX}_{\lambda}$.
\end{corollary}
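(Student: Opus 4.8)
The plan is to read off the claim directly from \Cref{prop:wcvxf_me}, using the fixed-point characterization of both critical sets via the proximal mapping. Recall that $\lambda<\frac1\rho$ is assumed throughout, so $prox_{\lambda,f}$ is well defined and single-valued and $f_\lambda$ is (continuously) differentiable by parts (a) and (d) of \Cref{prop:wcvxf_me}. In particular, since $f_\lambda$ is smooth, its set of critical points is simply its set of stationary points: $\overline{\setX}_\lambda=\{x\in\R^d : \nabla f_\lambda(x)=0\}$.

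First I would use argument (d) of \Cref{prop:wcvxf_me}, namely $\nabla f_\lambda(x)=\frac1\lambda\bigl(x-prox_{\lambda,f}(x)\bigr)$, to rewrite the stationarity condition: $x\in\overline{\setX}_\lambda$ if and only if $x=prox_{\lambda,f}(x)$. Next I would invoke argument (e) of \Cref{prop:wcvxf_me}, which states that $x=prox_{\lambda,f}(x)$ if and only if $0\in\partial f(x)$, i.e., $x\in\overline{\setX}$. Chaining these two equivalences gives $x\in\overline{\setX}_\lambda\iff x\in\overline{\setX}$, hence $\overline{\setX}=\overline{\setX}_\lambda$, which is the desired conclusion. (Alternatively, one can get one inclusion for free from \Cref{cor:me}: every global minimizer of $f_\lambda$ is a global minimizer of $f$ and conversely, but the argument above yields the full statement about critical points, not just minimizers.)

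There is no real obstacle here: the corollary is an immediate packaging of parts (d) and (e) of \Cref{prop:wcvxf_me}. The only point that warrants a word of care is the meaning of "critical point" on the two sides — for the nonsmooth $f$ it is $0\in\partial f(x)$ (Clarke/limiting subdifferential of the weakly convex function, as set up in \Cref{sec:cvx-wcvx-subgrad}), while for the smooth Moreau envelope $f_\lambda$ it coincides with $\nabla f_\lambda(x)=0$ — and the standing assumption $\lambda<\frac1\rho$, which is exactly what makes $f_\lambda$ differentiable so that this identification is legitimate.
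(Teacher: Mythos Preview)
Your proposal is correct and matches the paper's approach: both derive the corollary as an immediate consequence of \Cref{prop:wcvxf_me}. The only cosmetic difference is that the paper cites parts (c) and (d) while you cite (d) and (e), but these are interchangeable here since (e) itself is what the two inequalities in (c) collapse to at a fixed point.
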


\begin{proof}
This corollary is a direct consequence of parts (c) and (d) of~\Cref{prop:wcvxf_me}.
\end{proof}

\subsection{The supermartingale convergence theorem}
Next, we introduce a well known convergence theorem in stochastic optimization society.
\begin{theorem}[Supermartingale convergence theorem \cite{RS}]\label{thm:RS}
	Let $\{\Lambda^k\}_{k\in\mathbb{N}}$, $\{\mu^k\}_{k\in\mathbb{N}}$, $\{\nu^k\}_{k\in\mathbb{N}}$, and $\{\eta^k\}_{k\in\mathbb{N}}$ be four positive sequences of real-valued random variables adapted to the filtration $\{\xi_{k}\}_{k\in\mathbb{N}}$. Suppose the following recursion holds true:
	\[
	\mathbb{E}_{\xi_{k}} \left[\Lambda^{k+1} \right] \leq(1+\mu^k)\Lambda^k+\nu^k-\eta^k,\quad\forall \ k\in\mathbb{N},
	\]
	where
	\[
	\sum_{k\in\mathbb{N}}\mu^k<\infty\quad\mbox{and}\quad\sum_{k\in\mathbb{N}}\nu^k<\infty \quad\mbox{almost surely}.
	\]
	Then, the sequence $\{\Lambda^k\}_{k\in\mathbb{N}}$ almost surely converges to a finite random variable\footnote{A random variable $X$ is finite if $\mathbb{P}\left(\{\omega\in\Omega: X(\omega)=\infty\}\right)=0$.} $\bar \Lambda$ and $\sum_{k\in\mathbb{N}}\eta^k<\infty$ almost surely.
\end{theorem}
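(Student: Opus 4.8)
The statement is the classical Robbins--Siegmund lemma, and the plan is to follow the standard three-move argument: absorb the multiplicative factor $1+\mu^k$ by a rescaling, build from the recursion an explicit supermartingale and prove it converges almost surely via a stopping-time truncation, and then decode the two conclusions from that convergence. (Conditional expectations of the nonnegative variables are always well defined with values in $[0,\infty]$, and the recursion forces the relevant ones to be almost surely finite.) For the first move, put $\beta^0=1$ and $\beta^{k+1}=\beta^k/(1+\mu^k)$, so each $\beta^{k+1}$ is $\xi_k$-measurable and $0<\beta^{k+1}\le\beta^k\le 1$. Since $-\log\beta^\infty=\sum_k\log(1+\mu^k)\le\sum_k\mu^k<\infty$ almost surely, the nonincreasing sequence $\{\beta^k\}$ converges almost surely to a strictly positive finite limit $\beta^\infty$. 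Multiplying the recursion by the $\xi_k$-measurable factor $\beta^{k+1}$ and using $\beta^{k+1}(1+\mu^k)=\beta^k$ gives $\mathbb{E}_{\xi_k}[\beta^{k+1}\Lambda^{k+1}]\le\beta^k\Lambda^k+\beta^{k+1}\nu^k-\beta^{k+1}\eta^k$; writing $\widetilde\Lambda^k=\beta^k\Lambda^k$, $\widetilde\nu^k=\beta^{k+1}\nu^k$, $\widetilde\eta^k=\beta^{k+1}\eta^k$ reduces matters to the case $\mu^k\equiv0$, still with $\sum_k\widetilde\nu^k\le\sum_k\nu^k<\infty$ almost surely.

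Next, set $V^k=\widetilde\Lambda^k+\sum_{j=0}^{k-1}\widetilde\eta^j-\sum_{j=0}^{k-1}\widetilde\nu^j$. The reduced recursion gives $\mathbb{E}_{\xi_k}[V^{k+1}]\le V^k-\widetilde\eta^k\le V^k$, so $\{V^k\}$ is a supermartingale, but it is bounded below only by $-\sum_j\widetilde\nu^j$, which is almost surely finite yet in general not integrable. To remedy this, for each $m\in\mathbb{N}$ introduce the stopping time $\tau_m=\inf\{k:\sum_{j=0}^{k-1}\widetilde\nu^j>m\}$; the stopped process $\{V^{k\wedge\tau_m}\}$ is a supermartingale bounded below by the constant $-m$, so, shifting by $m$ to a nonnegative supermartingale, it converges almost surely by Doob's supermartingale convergence theorem. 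Since $\{\tau_m=\infty\}=\{\sum_j\widetilde\nu^j\le m\}$ and $\bigcup_{m}\{\tau_m=\infty\}$ has probability one, $\{V^k\}$ itself converges almost surely to a finite limit.

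Finally, write $V^k=\widetilde\Lambda^k+A^k-B^k$ with $A^k=\sum_{j<k}\widetilde\eta^j$ and $B^k=\sum_{j<k}\widetilde\nu^j$ both nondecreasing and $B^k\to B^\infty<\infty$ almost surely. Since $V^k$ and $B^k$ converge and $\widetilde\Lambda^k\ge0$, the nondecreasing sequence $A^k=(V^k+B^k)-\widetilde\Lambda^k\le V^k+B^k$ is bounded above, hence $\sum_j\widetilde\eta^j=A^\infty<\infty$ almost surely, and then $\widetilde\Lambda^k=V^k+B^k-A^k$ converges almost surely to a finite limit. Dividing by $\beta^k\to\beta^\infty\in(0,1]$ shows $\Lambda^k=\widetilde\Lambda^k/\beta^k$ converges almost surely to a finite random variable $\bar\Lambda$, and $\sum_k\eta^k=\sum_k\widetilde\eta^k/\beta^{k+1}\le(1/\beta^\infty)\sum_k\widetilde\eta^k<\infty$ almost surely, which is the remaining assertion.

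The one genuinely delicate point is the middle step: the natural Lyapunov-type supermartingale $V^k$ is bounded below not by a constant but by the almost surely finite (possibly non-integrable) random variable $-B^\infty$, so a direct appeal to the bounded-below supermartingale convergence theorem is not licensed; the stopping-time localization along the exhausting events $\{\tau_m=\infty\}$, together with the attendant integrability bookkeeping, is exactly what makes the argument rigorous. The remaining steps are routine, relying only on the almost sure positivity of $\beta^\infty$ --- itself a consequence of $\sum_k\mu^k<\infty$ --- and on monotone-sequence arguments.
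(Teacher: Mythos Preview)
Your proof is correct and follows the standard Robbins--Siegmund argument (rescaling to absorb $1+\mu^k$, building the supermartingale $V^k$, localizing via stopping times to handle the non-integrable lower bound). However, the paper does not actually prove this theorem: it is stated as a known result with a citation to \cite{RS} and used as a black box in the later convergence proofs. So there is no ``paper's own proof'' to compare against---you have supplied a complete argument where the paper simply invokes the literature.
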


\subsection{The linearly bounded subgradients assumption}
Let us present a more general condition than the traditional Lipschitz continuity assumption in this subsection. A fundamental result in variational analysis and nonsmooth optimization is the equivalence between Lipschitz continuity and bounded subgradients for a very general class of functions \cite[Theorem 9.13]{Rockafellar}.
Thus, in order to generalize the Lipschitz continuity or bounded subgradients assumption (they are equivalent as outlined),  we impose the following \emph{linearly bounded subgradients} assumption.
\begin{assumption}\label{assump1} There exist constants $L_1\geq 0$,  $L_2>0$ such that the function $f$ in \eqref{eq:opt problem} satisfies
\[
 \|r\| \leq L_1\|x\|+L_2, \quad \forall x \in \domain f, \  \ r\in\partial f(x).
\]
\end{assumption}
When $L_1 = 0$, \Cref{assump1} reduces to the traditional Lipschitz continuity assumption, i.e., $f$ is $L_1$-Lipschitz continuous \cite[Theorem 9.13]{Rockafellar}. Nonetheless, it is significantly more general with $L_1>0$.  For instance,  problem \eqref{eq:svm} satisfies our assumption trivially due to the linearly bounded term $L_1 \|x\|$.  Moreover, considerably many weakly convex problems satisfy our assumption.  By checking the subdifferential, it is easy to verify that  our assumption holds for problem \eqref{eq:pr} (see \Cref{cor:pr} for its subdifferential) and the robust low-rank matrix recovery problem \cite{li2020nonconvex}. Indeed, the composite form $h(\Phi(x))$ in \eqref{eq:opt problem} with  $h$ being $L_h$-Lipschitz continuous and the map $\Phi$ having $L_{\Phi}$-Lipschitz continuous Jacobian satisfies our assumption. Note that $\nabla \Phi(x)^{\top}\zeta \in \partial f(x)$ for any $\zeta\in \partial h(\Phi(x))$ according to \eqref{eq:subdifferential}. Then, it is easy to see that $\|\nabla \Phi(x)^{\top}\zeta\|\leq L_{h}\|\nabla \Phi(x)\|\leq L_{h}\|\nabla \Phi(x)-\nabla \Phi(0)\|+\|\nabla \Phi(0)\|\leq L_{h} L_{\Phi}\|x\|+\|\nabla \Phi(0)\|$. Since $\nabla \Phi(x)$ is continuous, there exists a $L_2>0$ such that $\|\nabla \Phi(0)\| \leq L_2$, which, together with defining $L_1 = L_{h} L_{\Phi}$, establishes the claim.

\section{Randomized Coordinate Subgradient Method}\label{sec:RCS}

In this section, we introduce a coordinate-type subgradient method for solving problem~\eqref{eq:opt problem}. The convexity / weak convexity of $f$ is currently not required in algorithm design. We first reveal a structured property of the subdifferential of the composite objective function $f$ in \eqref{eq:opt problem}. According to the composite structure, its subdifferential is given by \cite[Theorem 10.6]{Rockafellar}
\begin{equation}\label{eq:subdifferential}
    \boxed{\quad \partial f(x)=\nabla \Phi(x)^{\top}\partial h(\Phi(x)) \quad}
\end{equation}
where $\nabla \Phi(x) \in \R^{n\times d}$ is the Jacobian of the smooth map $\Phi$ at $x$ and $\partial h(\Phi(x))$ is the subdifferential of the outer function $h$ at $\Phi(x)$. For any subgradient $\zeta\in \partial h(\Phi(x))$, we have $\zeta\in \R^n$.

One important feature of $\partial f(x)$ is that it enables block coordinate computation as it fully separates the smooth part $\nabla \Phi(x)$ and the nonsmooth part $\partial h(\Phi(x))$. Let us decompose $\R^d$ into $N$ subspaces $\R^d=\bigotimes_{i=1}^N\R^{d_i}$ with $d=\sum_{i=1}^Nd_i$. Correspondingly, the decision variable $x\in \R^d$ is decomposed into $N$ block coordinates $x = (x_1,\ldots, x_i, \ldots, x_N)$ with $x_i\in \R^{d_i}$ and the Jacobian $\nabla \Phi(x)$ is decomposed into $N$ column-wise blocks
\[
    \nabla \Phi(x) = \begin{bmatrix}
        \nabla_{1} \Phi(x), \ldots, \nabla_{i} \Phi(x), \ldots, \nabla_N \Phi(x)
    \end{bmatrix}
\]
where $\nabla_{i} \Phi(x) \in \R^{n \times d_i}$ represents the derivative of $\Phi$ with respective to the block coordinate $x_i\in \R^{d_i}$.
To compute the $i$-th block coordinate subgradient of the objective function $f$, i.e., the subgradient of $f$ with respect to the block coordinate $x_i\in \R^{d_i}$, we can follow the two steps:
\begin{enumerate}[label=\textup{\textrm{(S.\arabic*)}},topsep=0pt,itemsep=0.2ex,partopsep=0ex, leftmargin = 1.2cm]
\item \label{S1} Compute a subgradient $\zeta \in \partial h(\Phi(x))$ of the outer function $h$ at $\Phi(x)$.
\item \label{S2} Compute the block coordinate Jacobian $\nabla_i \Phi(x) \in \R^{n\times d_i}$ and then compute the $i$-th block coordinate subgradient $r_i$ of $f$ by $r_i = \nabla_i \Phi(x)^\top \zeta \in \R^{d_i}$.
\end{enumerate}
In this way, aggregating all the $N$ block coordinate subgradients results in a complete subgradient of $f$, i.e.,
\begin{equation}\label{eq:coordinate subgrad to subgrad}
    \R^d \ni r := \begin{bmatrix}
        r_1 \\ \vdots \\ r_N
    \end{bmatrix} =
    \left[\begin{array}{c}\nabla_1 \Phi(x)^{\top}\zeta\\ \vdots\\ \nabla_N \Phi(x)^{\top}\zeta\end{array}\right] \in \partial f(x).
\end{equation}
The key factor of the above development lies in that one can pre-compute and then fix the subgradient $\zeta$ of the nonsmooth part, while the block coordinate separation is achieved by separating the Jacobian of the smooth part $\nabla \Phi(x)$, which is well defined.

Now, we are ready to design the Randomized Coordinate Subgradient method (RCS) for solving problem~\eqref{eq:opt problem}. At $k$-th iteration, RCS first follows step \ref{S1} to compute a subgradient $\zeta^k$ of the outer function $h$ at $\Phi(x^k)$. Second, it selects a block coordinate index $i(k)$ from $\{1,\ldots, N\}$ uniformly at random. This random choice of the block coordinate index is motivated by \cite{nesterov2014subgradient} and is used to connect the analysis of RCS with the complete subgradient (see \Cref{lemma:inequality_iter} below). RCS then computes the $i(k)$-th block coordinate subgradient $r^k_{i(k)}$ of $f$ by following step \ref{S2}. Finally, RCS only updates the $i(k)$-th block coordinate $x_{i(k)}$ with a proper step size. The pseudocode of RCS is depicted in \Cref{alg:RCS}.

\begin{figure}[t]
\label{alg:LSB}
\begin{algorithm}[H]
\caption{RCS: Randomized Coordinate Subgradient Method for Solving \eqref{eq:opt problem}}
{\bf Initialization:}  $x^0$ and $\alpha_0$;
\begin{algorithmic}[1]
\For{$k=0,1,\ldots$}
\State Compute a subgradient $\zeta^k\in\partial h(\Phi(x^k))$ of the outer function $h$ at $\Phi(x^k)$;
\State Sample a block coordinate index $i(k)$ from $\{1,...,N\}$ uniformly at random;
\State Compute a block coordinate subgradient $r_{i(k)}^k=\nabla_{i(k)}\Phi(x^k)^{\top}\zeta^k$;
\State Update the step size $\alpha_{k}$ according to a certain rule;
\State Update  $x_{i(k)}^{k+1} = x_{i(k)}^{k} - \alpha_{k}r_{i(k)}^k$, while keep $x_{j}^{k+1} = x_{j}^{k}$ for all other $j\neq i(k)$.
\EndFor
\end{algorithmic}
\label{alg:RCS}
\end{algorithm}
\end{figure}

\subsection{A preliminary recursion for RCS}\label{sec:recursion RCS}

The randomization step in RCS helps us connect the analysis of RCS (using a coordinate subgradient) to that based on the complete subgradient in expectation. The result is displayed in the following lemma, which establishes a preliminary recursion for RCS.

\begin{lemma}[basic recursion]\label{lemma:inequality_iter}
	Let $\{x^k\}_{k\in \N}$ be the sequence of iterates generated by RCS  for solving problem \eqref{eq:opt problem}. Then, for all $x\in \R^d$,  the following hold:
	\begin{enumerate}[label=\textup{\textrm{(\alph*)}},topsep=0pt,itemsep=0ex,partopsep=0ex]
\item $\|x^k-x^{k+1}\|\leq\alpha_k\|r^k\|$.
\item  If \Cref{assump1} holds, then we have
\[
\mathbb{E}_{i(k)} \left[\|x-x^{k+1}\|^2\right]\leq\|x-x^k\|^2+\frac{\alpha_k^2(L_1\|x^k\|+L_2)^2}{N}-\frac{2\alpha_k}{N}\langle r^k,x^{k}-x\rangle.
\]
\end{enumerate}
\end{lemma}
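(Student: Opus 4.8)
The plan is to prove the two parts of \Cref{lemma:inequality_iter} separately, with part (a) being an immediate consequence of the update rule and part (b) following from expanding the squared distance and taking the conditional expectation over the uniformly random block index $i(k)$.

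For part (a), I would simply observe that by the last line of \Cref{alg:RCS}, the iterate $x^{k+1}$ differs from $x^k$ only in the $i(k)$-th block, where $x^{k+1}_{i(k)} = x^k_{i(k)} - \alpha_k r^k_{i(k)}$. Hence $\|x^k - x^{k+1}\| = \alpha_k \|r^k_{i(k)}\|$. Since $r^k_{i(k)}$ is one block of the full vector $r^k = (r^k_1,\dots,r^k_N)$ assembled as in \eqref{eq:coordinate subgrad to subgrad}, we have $\|r^k_{i(k)}\| \le \|r^k\|$, which gives the claimed bound. (Note this bound is deterministic, valid for every realization of $i(k)$.)

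For part (b), I would write $\|x - x^{k+1}\|^2 = \|x - x^k\|^2 + \|x^{k+1} - x^k\|^2 - 2\langle x^{k+1} - x^k, x - x^k\rangle$, i.e.
\[
\|x - x^{k+1}\|^2 = \|x-x^k\|^2 + \alpha_k^2 \|r^k_{i(k)}\|^2 + 2\alpha_k \langle r^k_{i(k)}, x - x^k \rangle.
\]
Now I take $\mathbb{E}_{i(k)}[\cdot]$, using that $i(k)$ is uniform on $\{1,\dots,N\}$ and independent of everything computed before the sampling step (in particular $x^k$, $\zeta^k$, and all the blocks $r^k_i$ are $\mathcal{F}_{k-1}$-measurable / fixed given the past). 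The key identity is that for any vector $v = (v_1,\dots,v_N)\in\R^d$, picking a block uniformly gives $\mathbb{E}_{i(k)}[\,\text{(quantity depending only on block }i(k))\,] = \tfrac1N\sum_{i=1}^N(\cdots)$; applying this to $v = r^k$ embedded coordinate-wise yields $\mathbb{E}_{i(k)}[\|r^k_{i(k)}\|^2] = \tfrac1N\|r^k\|^2$ and $\mathbb{E}_{i(k)}[\langle r^k_{i(k)}, (x-x^k)_{i(k)}\rangle] = \tfrac1N \langle r^k, x - x^k\rangle$, where in the inner-product term I use that $\langle r^k_{i(k)}, x - x^k\rangle$ only sees the $i(k)$-th block of $x - x^k$ because $r^k_{i(k)}$ is supported there. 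Finally, I invoke \Cref{assump1} together with the aggregation identity \eqref{eq:coordinate subgrad to subgrad}, which says $r^k \in \partial f(x^k)$, to bound $\|r^k\| \le L_1\|x^k\| + L_2$, and replace $\langle r^k, x - x^k\rangle$ by $-\langle r^k, x^k - x\rangle$. Combining these gives exactly the stated recursion.

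The only genuine subtlety — and the step I would be most careful about — is the bookkeeping that $r^k$ is a bona fide subgradient of $f$ at $x^k$, so that \Cref{assump1} applies to $\|r^k\|$ rather than merely to $\|r^k_{i(k)}\|$; this is precisely the point of the randomization and of the chain-rule identity \eqref{eq:subdifferential}/\eqref{eq:coordinate subgrad to subgrad}, since $r^k = \nabla\Phi(x^k)^\top \zeta^k$ with $\zeta^k \in \partial h(\Phi(x^k))$. The rest is routine expansion and linearity of conditional expectation; no estimation beyond Cauchy--Schwarz (already used in part (a)) is needed.
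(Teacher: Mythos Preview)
Your proposal is correct and follows essentially the same approach as the paper's proof: both expand $\|x-x^{k+1}\|^2$ using the single-block update, take the conditional expectation over the uniform index $i(k)$ to obtain $\mathbb{E}_{i(k)}[\|r^k_{i(k)}\|^2]=\tfrac1N\|r^k\|^2$ and $\mathbb{E}_{i(k)}[\langle r^k_{i(k)},(x-x^k)_{i(k)}\rangle]=\tfrac1N\langle r^k,x-x^k\rangle$, and then invoke \Cref{assump1} via the chain-rule identity \eqref{eq:coordinate subgrad to subgrad} to bound $\|r^k\|$. Your explicit remark that $r^k\in\partial f(x^k)$ is needed for \Cref{assump1} to apply is exactly the point the paper relies on implicitly.
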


\begin{proof} For all $x\in\R^d$, we have
	\begin{equation}\label{eq:VI}
		\begin{aligned}
			\|x-x^{k+1}\|^2&=\sum_{j\neq i(k)}\|x_j-x_{j}^{k}\|^2+\|x_{i(k)}-x_{i(k)}^{k}+\alpha_k r_{i(k)}^k\|^2\\
			&=\|x-x^{k}\|^2+2\langle\alpha_k r_{i(k)}^k,x_{i(k)}-x_{i(k)}^{k}\rangle+(\alpha_k\|r_{i(k)}^k\|)^2.
		\end{aligned}
	\end{equation}
	We first show part (a). By taking $x=x^k$ in~\eqref{eq:VI}, we obtain
	\[
	\|x^k-x^{k+1}\|=\alpha_k\|r_{i(k)}^k\|\leq\alpha_k\|r^k\|.
	\]
	We now show part (b). By the design of RCS and \eqref{eq:coordinate subgrad to subgrad}, we have
   \[
     \mathbb{E}_{i(k)}\langle r_{i(k)}^k,(x^k-x)_{i(k)}\rangle=\frac{1}{N}\langle r^k,x^k-x\rangle
   \]
   and
   \[
   \mathbb{E}_{i(k)}\|r_{i(k)}^k\|^2=\frac{1}{N}\|r^k\|^2.
   \]
   Taking expectation with respect to $i(k)$ on both sides of~\eqref{eq:VI}, together with the above equations, gives
   \[
\mathbb{E}_{i(k)}\left[\|x-x^{k+1}\|^2\right]\leq\|x-x^{k}\|^2+\frac{\alpha_k^2}{N}\|r^k\|^2-\frac{2\alpha_k}{N}\langle r^k,x^{k}-x\rangle.
	\]
	Part (b) follows from invoking~\Cref{assump1} in the above inequality.
\end{proof}

\section{Analytical Results}\label{sec:analytical results}

In this section, we  introduce several analytical results that are important for our subsequent convergence analysis --- especially for the weakly convex case. These results  are new to our knowledge and are of independent interest.

\subsection{A convergence lemma}
The following lemma is a strict generalization of \cite[Lemma 4]{CohenZ} from scalar case to mapping case. It will be the key tool for establishing the asymptotic convergence results for RCS. Note that we do not restrict the mapping $\Theta$ to be the gradient of  $f$, though we use it in this way in our analysis.
\begin{lemma}[convergence lemma]\label{lemma:lemma4in84} Consider the sequences $\{y^k\}_{k\in\mathbb{N}}$ in $\R^d$ and $\{\mu_k\}_{k\in\mathbb{N}}$ in $\R_+$. Let $\Theta:\R^d\rightarrow \R^m$ be  $L_{\Theta}$-Lipschitz continuous over $\{y^k\}_{k\in\mathbb{N}}$.  Suppose further, $\exists p>0$, such that:
\begin{eqnarray}
&\exists \ M\in\R_+ \text{ such that } \|y^k-y^{k+1}\|\leq M\mu_k\label{eq:cd1}, \;\forall k\in\mathbb{N},\\
&\sum_{k\in\mathbb{N}}\mu_k=\infty,\label{eq:cd2}\\
&\exists\ \bar{\Theta}\in\R^m \text{ such that } \sum_{k\in\mathbb{N}}\mu_k\|\Theta(y^k)-\bar{\Theta}\|^p<\infty.\label{eq:cd3}
\end{eqnarray}
Then, we have $\lim_{k\rightarrow\infty}\|\Theta(y^k)-\bar{\Theta}\|=0$.
\end{lemma}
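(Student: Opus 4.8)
The plan is to argue by contradiction: suppose $\|\Theta(y^k) - \bar\Theta\|$ does not converge to $0$. Then there is some $\varepsilon > 0$ such that $\|\Theta(y^k) - \bar\Theta\| \geq \varepsilon$ infinitely often. The strategy is to show that, around each such "bad" index, there is actually a whole block of indices at which $\|\Theta(y^k) - \bar\Theta\|$ stays bounded below by $\varepsilon/2$, and that the $\mu_k$-mass of these blocks is bounded below by a positive constant; summing over infinitely many disjoint blocks then contradicts \eqref{eq:cd3}.

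First I would fix $\varepsilon > 0$ with $\|\Theta(y^{k_j}) - \bar\Theta\| \geq \varepsilon$ along a subsequence $\{k_j\}$. Using the $L_\Theta$-Lipschitz continuity of $\Theta$ on $\{y^k\}$ together with \eqref{eq:cd1}, for any indices $k \leq \ell$ we get the telescoping bound
\[
\|\Theta(y^\ell) - \Theta(y^k)\| \leq L_\Theta \|y^\ell - y^k\| \leq L_\Theta \sum_{t=k}^{\ell-1} \|y^{t+1} - y^t\| \leq L_\Theta M \sum_{t=k}^{\ell-1} \mu_t.
\]
So starting from a bad index $k_j$, as long as the accumulated step mass $\sum_{t=k_j}^{\ell-1}\mu_t$ stays below $\delta := \varepsilon/(2 L_\Theta M)$, we have $\|\Theta(y^\ell) - \bar\Theta\| \geq \|\Theta(y^{k_j}) - \bar\Theta\| - \|\Theta(y^\ell) - \Theta(y^{k_j})\| \geq \varepsilon/2$. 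Here I would use \eqref{eq:cd2} to guarantee that $\sum_{t \geq k_j}\mu_t = \infty$, so we can define $\ell_j$ to be the first index past $k_j$ at which $\sum_{t=k_j}^{\ell_j - 1}\mu_t \geq \delta$; such $\ell_j$ exists and is finite. (If $L_\Theta = 0$ or $M = 0$ the statement is trivial or near-trivial, so assume both positive.)

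Next I would control the $\mu$-mass of the block $[k_j, \ell_j)$. By minimality of $\ell_j$ we have $\sum_{t=k_j}^{\ell_j - 2}\mu_t < \delta$, hence $\sum_{t=k_j}^{\ell_j - 1}\mu_t < \delta + \mu_{\ell_j - 1}$; combined with the defining inequality this gives $\delta \leq \sum_{t=k_j}^{\ell_j-1}\mu_t$. The point is just the lower bound $\sum_{t=k_j}^{\ell_j - 1}\mu_t \geq \delta$. On this block every term satisfies $\|\Theta(y^t) - \bar\Theta\|^p \geq (\varepsilon/2)^p$, so
\[
\sum_{t=k_j}^{\ell_j - 1} \mu_t \|\Theta(y^t) - \bar\Theta\|^p \geq (\varepsilon/2)^p \sum_{t=k_j}^{\ell_j - 1}\mu_t \geq (\varepsilon/2)^p \delta > 0.
\]
By passing to a further subsequence of $\{k_j\}$ I can ensure the blocks $[k_j, \ell_j)$ are pairwise disjoint (e.g. greedily pick $k_{j+1} > \ell_j$, using that bad indices recur infinitely often). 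Summing the displayed lower bound over the infinitely many disjoint blocks forces $\sum_{k\in\mathbb{N}} \mu_k \|\Theta(y^k) - \bar\Theta\|^p = \infty$, contradicting \eqref{eq:cd3}. Therefore $\lim_{k\to\infty}\|\Theta(y^k) - \bar\Theta\| = 0$.

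The only mildly delicate point — the "main obstacle" such as it is — is the bookkeeping that makes the blocks genuinely disjoint while still capturing infinitely many bad indices; this is handled by the greedy selection $k_{j+1} := \min\{ k_j' : k_j' > \ell_j,\ \|\Theta(y^{k_j'}) - \bar\Theta\| \geq \varepsilon\}$, which is well-defined precisely because $\|\Theta(y^k)-\bar\Theta\| \geq \varepsilon$ for infinitely many $k$. Everything else is the telescoping Lipschitz estimate plus the divergence of $\sum \mu_k$ to certify that each block terminates in finite time. I would also remark that the exponent $p$ plays no structural role beyond the harmless constant $(\varepsilon/2)^p$, and that the hypothesis $\mu_k \in \R_+$ (nonnegativity) is what lets the partial sums $\sum_{t=k_j}^{\ell-1}\mu_t$ be monotone in $\ell$, which the argument uses implicitly.
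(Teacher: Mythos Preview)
Your proof is correct, and the argument is cleanly organized. It differs in structure from the paper's proof, though the two are in a sense dual to one another.

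The paper argues directly rather than by contradiction. It fixes $\varepsilon>0$, observes from \eqref{eq:cd3} that the ``bad'' set $\bar{\mathbb{N}}_\varepsilon=\{k:\|\Theta(y^k)-\bar\Theta\|>\varepsilon\}$ has finite $\mu$-mass (since $\varepsilon^p\sum_{k\in\bar{\mathbb{N}}_\varepsilon}\mu_k\le\sum_k\mu_k\|\Theta(y^k)-\bar\Theta\|^p<\infty$), and then shows that for any sufficiently large $k$, if $k$ is bad one can walk forward to the next good index $m$; the intermediate indices are all bad, so by the tail estimate their $\mu$-mass is tiny, and the same Lipschitz telescoping bound you use gives $\|\Theta(y^k)-\Theta(y^m)\|$ small. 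You instead start at a bad index and argue that the Lipschitz bound forces an entire block of indices to remain ``half-bad'' until at least $\delta$ units of $\mu$-mass accumulate, and then sum infinitely many disjoint such blocks to contradict \eqref{eq:cd3}. So where the paper deduces ``bad set has finite mass $\Rightarrow$ you escape it quickly,'' you deduce ``you cannot escape it quickly $\Rightarrow$ bad set has infinite mass.'' Both routes rest on the identical telescoping estimate $\|\Theta(y^\ell)-\Theta(y^k)\|\le L_\Theta M\sum_{t=k}^{\ell-1}\mu_t$; the paper's version avoids the block-disjointness bookkeeping at the cost of an extra quantifier layer (choosing $\mathfrak{n}(\delta)$), while yours is arguably more self-contained but needs the greedy selection step you flagged.
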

\begin{proof}
	 For arbitrary $\varepsilon>0$, let $\mathbb{N}_{\varepsilon} :=\{k\in\mathbb{N}:\|\Theta(y^k)-\bar{\Theta}\|\leq\varepsilon\}$. It immediately follows from \eqref{eq:cd2} and \eqref{eq:cd3} that $\liminf_{k\rightarrow\infty} \|\Theta(y^k)-\bar{\Theta}\| = 0 $, which implies that $\mathbb{N}_{\varepsilon}$ is an infinite set. Let $\bar{\mathbb{N}}_{\varepsilon}$ be the complementary set of $\mathbb{N}_{\varepsilon}$ in $\mathbb{N}$. Then, we have
	 \begingroup
	 \allowdisplaybreaks
	\[		\varepsilon^p\sum_{k\in\bar{\mathbb{N}}_{\varepsilon}}\mu_k\leq \sum_{k\in\bar{\mathbb{N}}_{\varepsilon}}\mu_k\|\Theta(y^k)-\bar{\Theta}\|^p\leq\sum_{k\in\mathbb{N}}\mu_k\|\Theta(y^k)-\bar{\Theta}\|^p <\infty,
	\]
	\endgroup
	where the first inequality is due to the mapping $x\mapsto x^p$ is non-decreasing for all $p>0$, while the last inequality is from \eqref{eq:cd3}.
	Hence, for arbitrary $\delta>0$, there exits an integer $\mathfrak{n}(\delta)$ such that
	\[
	\sum_{\substack{\ell\geq\mathfrak{n}(\delta),  \ \ell\in\bar{\mathbb{N}}_{\varepsilon}}}\mu_\ell\leq\delta.
	\]
	Next, we take an arbitrary $\gamma>0$ and set $\varepsilon=\frac{\gamma}{2}$, $\delta=\frac{\gamma}{2L_{\Theta}M}$. For all $k\geq\mathfrak{n}(\delta)$, if $k\in\mathbb{N}_{\varepsilon}$, then we have $\|\Theta(y^k)-\bar{\Theta}\|\leq\varepsilon<\gamma$, otherwise $k\in\bar{\mathbb{N}}_{\varepsilon}$. Let $m$ be the smallest element in the set $\{\ell\in\mathbb{N}_{\varepsilon}:\ell\geq k\}$. Clearly, $m$ is finite since $\mathbb{N}_{\varepsilon}$ is an infinite set. Without loss of generality, we can assume $m>k$.  Then, we have
	\[
	\begin{aligned}
		\|\Theta(y^k)-\bar{\Theta}\|& \leq\|\Theta(y^k)-\Theta(y^m)\|+\|\Theta(y^m)-\bar{\Theta}\| \\
		&\leq L_{\Theta}\sum_{\ell=k}^{m-1}\|y^{\ell+1}-y^\ell\|+\varepsilon \\
		&\leq L_{\Theta}M\sum_{\ell=k}^{m-1}\mu_\ell+\frac{\gamma}{2}\\
		&=  L_{\Theta}M\sum_{k \leq \ell <m, \ \ell\in \bar{\mathbb{N}}_{\varepsilon}}\mu_\ell+\frac{\gamma}{2}\\
		&\leq L_{\Theta}M\sum_{\ell\geq\mathfrak{n}(\delta), \  \ell\in\bar{\mathbb{N}}_{\varepsilon}}\mu_\ell+\frac{\gamma}{2}\\
		&\leq\gamma.
	\end{aligned}
	\]
	Here, the second inequality follows from the Lipschitz continuity of $\Theta$ over $\{y^k\}_{k\in\mathbb{N}}$, the third inequality is from \eqref{eq:cd1}, and the  equality is due to the definition of $m$. Since $\gamma>0$ is taken arbitrarily, we have shown $\|\Theta(y^k)-\bar{\Theta}\|\rightarrow 0$ as $k\rightarrow \infty $.
\end{proof}

\subsection{Error bounds}\label{sec:error bound}
In our later analysis for the weakly convex case, the more general linearly bounded subgradients assumption (i.e., \Cref{assump1}) introduces additional difficulties.  We need the notion of \emph{global metric subregularity} of $\nabla f_{\lambda}$ to tackle the difficulty. In this subsection, we first identify sufficient conditions on when this error bound holds true.
Then, we establish that  the  concrete robust phase retrieval problem \eqref{eq:pr} satisfies this error bound.

The following lemma connects two types  of global error bounds for weakly convex functions.

\begin{lemma}[relation between two global error bounds]\label{prop:eb}
	For a weakly convex function $f:\R^d\rightarrow \R$, the following statements are equivalent:
	\begin{enumerate}[label=\textup{\textrm{(\alph*)}},topsep=0pt,itemsep=0ex,partopsep=0ex]
		\item $\partial f$ satisfies the global metric subregularity, i.e., there exists a constant $\kappa_1>0$ such that
		\[
		\dist(x,\overline{\setX})\leq\kappa_1 \dist(0,\partial f(x)),\qquad\forall x\in\setR^d.
		\]
		\item $f$ satisfies the  global proximal error bound, i.e., there exists a constant $\kappa_2>0$ such that
		\[
		\dist(x,\overline{\setX})\leq\kappa_2\|x-prox_{\lambda,f}(x)\|,\qquad\forall x\in\setR^d.
		\]
	\end{enumerate}
\end{lemma}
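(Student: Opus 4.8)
The plan is to prove the two implications separately, relying on parts (c)--(e) of \Cref{prop:wcvxf_me}, which precisely relate the quantities $\dist(0,\partial f(x))$, $\dist(0,\partial f(prox_{\lambda,f}(x)))$, and $\|x-prox_{\lambda,f}(x)\|$, and on \Cref{cor:mecritical}, which identifies $\overline{\setX}$ with $\overline{\setX}_\lambda$, the critical set of $f_\lambda$.

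\emph{(a) $\Rightarrow$ (b).} Assume the global metric subregularity of $\partial f$ with constant $\kappa_1$. Fix $x\in\R^d$ and apply the metric subregularity inequality at the point $prox_{\lambda,f}(x)$ rather than at $x$ itself: this gives $\dist(prox_{\lambda,f}(x),\overline{\setX})\leq\kappa_1\dist(0,\partial f(prox_{\lambda,f}(x)))$. By the left-hand inequality in part (c) of \Cref{prop:wcvxf_me}, $\dist(0,\partial f(prox_{\lambda,f}(x)))\leq\frac{1}{\lambda}\|x-prox_{\lambda,f}(x)\|$. Combining these and using the triangle inequality $\dist(x,\overline{\setX})\leq\|x-prox_{\lambda,f}(x)\|+\dist(prox_{\lambda,f}(x),\overline{\setX})$ yields $\dist(x,\overline{\setX})\leq\left(1+\frac{\kappa_1}{\lambda}\right)\|x-prox_{\lambda,f}(x)\|$, which is (b) with $\kappa_2=1+\kappa_1/\lambda$.

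\emph{(b) $\Rightarrow$ (a).} Assume the global proximal error bound with constant $\kappa_2$. Fix $x\in\R^d$. By the right-hand inequality in part (c) of \Cref{prop:wcvxf_me}, $\|x-prox_{\lambda,f}(x)\|\leq\frac{\lambda}{1-\lambda\rho}\dist(0,\partial f(x))$ (here we use the standing assumption $\lambda<1/\rho$ so the constant is positive and finite). Substituting into the hypothesis of (b) gives directly $\dist(x,\overline{\setX})\leq\frac{\kappa_2\lambda}{1-\lambda\rho}\dist(0,\partial f(x))$, which is (a) with $\kappa_1=\kappa_2\lambda/(1-\lambda\rho)$.

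The argument is essentially a bookkeeping exercise once parts (c)--(d) of \Cref{prop:wcvxf_me} are in hand, so there is no serious obstacle. The one point that requires a little care is the direction (a) $\Rightarrow$ (b): one must evaluate the metric subregularity bound at $prox_{\lambda,f}(x)$ and then transport the estimate back to $x$ via the triangle inequality, rather than trying to bound $\dist(0,\partial f(x))$ directly (which would go the wrong way). It is also worth recording explicitly that both equivalences are constant-dependent in a clean way, and that $\overline{\setX}$ may be replaced throughout by $\overline{\setX}_\lambda$ via \Cref{cor:mecritical} if one prefers to phrase the proximal error bound relative to the critical set of the Moreau envelope; I would include a one-line remark to that effect.
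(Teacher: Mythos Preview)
Your proof is correct. For the direction (b) $\Rightarrow$ (a) you argue exactly as the paper does, invoking the right-hand inequality in \Cref{prop:wcvxf_me}(c).

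For (a) $\Rightarrow$ (b) your approach differs from the paper's. The paper proceeds by contradiction: it assumes the proximal error bound fails, extracts a sequence $x_j$ with $\dist(x_j,\overline{\setX})>\kappa_2^j\|x_j-prox_{\lambda,f}(x_j)\|$ and $\kappa_2^j\to\infty$, and then manipulates this to obtain $\dist(prox_{\lambda,f}(x_j),\overline{\setX})\geq(\kappa_2^j-1)\lambda\,\dist(0,\partial f(prox_{\lambda,f}(x_j)))$, contradicting (a) once $\kappa_2^j-1>\kappa_1$. Your direct argument is more economical: you apply (a) at $prox_{\lambda,f}(x)$, use the left-hand inequality in \Cref{prop:wcvxf_me}(c), and close with a triangle inequality. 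Both routes hinge on the same idea---evaluate the subregularity bound at the proximal point rather than at $x$---but your version yields the explicit constant $\kappa_2=1+\kappa_1/\lambda$ and sidesteps the sequential machinery (including the paper's unexplained and unnecessary assertion that $x_j\to\tilde x$).
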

The main difficulty is to show that (a) implies (b). We construct a convergent sequence such that $f$ does not satisfy the global proximal error bound at the limit point of this sequence and then, prove this lemma by contradiction; see the following detailed proof.
\begin{proof}
	The direction ``(a)$\Leftarrow$(b)" directly follows from  part (c)  of~\Cref{prop:wcvxf_me}.
	It remains to show the direction ``(a)$\Rightarrow$(b)".  Let us assume on the contrary that   $f$ does not satisfy the global proximal error bound. Then, there exist a point $\tilde x\in \R^d$ and  $\kappa_2^j>0$, $\kappa_2^j\rightarrow \infty$, $x_j\rightarrow \tilde x$ as $j\rightarrow \infty$  such that
	\be\label{eq:contradict 1}
	\dist(x_j,\overline{\setX})>\kappa_2^j\|x_j-prox_{\lambda,f}(x_j)\| \quad \text{as} \quad j\rightarrow \infty.
	\ee
	Let $\operatorname{cl}\overline \setX$ denote the closure of $\overline \setX$. Note that $	\dist(x,\overline{\setX}) = \dist(x,\operatorname{cl}\overline{\setX})$ \cite[Proposition 1D.4]{dontchev2009}.  Then, by denoting $y\in \proj(prox_{\lambda,f}(x_j),\operatorname{cl}\overline{\setX})$,  we have
	\begin{align*}
		\dist(prox_{\lambda,f}(x_j),\overline{\setX})& = \|prox_{\lambda,f}(x_j)-y\|\\
        &= \|prox_{\lambda,f}(x_j) - x_j + x_j-y\|\\
		&\geq \|x_j-y\|  - \|x_j-prox_{\lambda,f}(x_j)\| \\
		&\geq \dist(x_j,\overline{\setX})  - \|x_j-prox_{\lambda,f}(x_j)\| \\
		&\geq (\kappa_2^j-1)\|x_j-prox_{\lambda,f}(x_j)\|\\
		&\geq(\kappa_2^j-1)\lambda \dist(0,\partial f(prox_{\lambda,f}(x_j))),
	\end{align*}
	where the second inequality is due to $\|x_j-y\| \geq \|x_j- \bar x_j\| =  \dist(x_j,\overline{\setX})$ with $\bar x_j\in \proj( x_j,\operatorname{cl}\overline{\setX})$, the third inequality is due to \eqref{eq:contradict 1}, and
	the  last inequality follows from part (c)  of~\Cref{prop:wcvxf_me}.
	Since $(\kappa_2^j-1)>\kappa_1$ as $j\to \infty$, we reach a contradiction to the global metric subregularity (a).
\end{proof}

Invoking $\|\nabla f_{\lambda}(x)\| = \frac{1}{\lambda} \|x-prox_{\lambda,f}(x)\|$ (see \Cref{prop:wcvxf_me} (d)) in \Cref{prop:eb} (b) and using the fact that $f$ and $f_{\lambda}$ has exactly the same set of critical points (see \Cref{cor:mecritical}),   we obtain the following result.

\begin{corollary}\label{cor:me_ms} Suppose that the subdifferential $\partial f$ of the weakly convex function $f$ satisfies the global metric subregularity property with parameter $\kappa_1$. Then, there exists a parameter $\kappa_2>0$ such that $\nabla f_{\lambda}$ satisfies the global metric subregularity property with parameter $\lambda \kappa_2$.
\end{corollary}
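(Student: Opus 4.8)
The plan is to directly reduce \Cref{cor:me_ms} to \Cref{prop:eb} together with parts (d) of \Cref{prop:wcvxf_me} and \Cref{cor:mecritical}. Since $\partial f$ satisfies the global metric subregularity with parameter $\kappa_1$, by the implication ``(a)$\Rightarrow$(b)'' of \Cref{prop:eb} there is a constant $\kappa_2>0$ such that $\dist(x,\overline{\setX})\leq\kappa_2\|x-prox_{\lambda,f}(x)\|$ for all $x\in\setR^d$. The remaining work is purely translational: rewrite the right-hand side in terms of $\nabla f_{\lambda}$ and rewrite the left-hand side in terms of the critical set of $f_{\lambda}$.

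First I would invoke \Cref{prop:wcvxf_me}(d), which gives $\nabla f_{\lambda}(x)=\frac{1}{\lambda}\left(x-prox_{\lambda,f}(x)\right)$, hence $\|x-prox_{\lambda,f}(x)\|=\lambda\|\nabla f_{\lambda}(x)\|$. Substituting this into the global proximal error bound yields
\[
\dist(x,\overline{\setX})\leq\lambda\kappa_2\|\nabla f_{\lambda}(x)\|,\qquad\forall x\in\setR^d.
\]
Next I would use \Cref{cor:mecritical}, which states $\overline{\setX}=\overline{\setX}_{\lambda}$, so the left-hand side equals $\dist(x,\overline{\setX}_{\lambda})$. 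Finally, since $f_{\lambda}$ is continuously differentiable (again \Cref{prop:wcvxf_me}(d)), its critical set is exactly $\{x:\nabla f_{\lambda}(x)=0\}$, and $\dist(0,\nabla f_{\lambda}(x))=\|\nabla f_{\lambda}(x)\|$ because $\nabla f_{\lambda}$ is single-valued. Therefore
\[
\dist(x,\overline{\setX}_{\lambda})\leq\lambda\kappa_2\,\dist(0,\nabla f_{\lambda}(x)),\qquad\forall x\in\setR^d,
\]
which is precisely the statement that $\nabla f_{\lambda}$ satisfies the global metric subregularity property with parameter $\lambda\kappa_2$.

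There is no substantial obstacle here: the corollary is a bookkeeping consequence of results already established in the excerpt, and the only point requiring a moment's care is making sure the set over which the distance is measured is the critical set of $f_{\lambda}$ (not of $f$) and that this coincides with $\overline{\setX}$ via \Cref{cor:mecritical}. One could optionally remark that the converse also holds by the same substitution together with the ``(a)$\Leftarrow$(b)'' direction of \Cref{prop:eb}, but that is not needed for the stated conclusion.
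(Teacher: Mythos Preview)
Your proposal is correct and follows essentially the same approach as the paper: the paper derives the corollary in one sentence by invoking \Cref{prop:eb}(b), substituting $\|x-prox_{\lambda,f}(x)\|=\lambda\|\nabla f_{\lambda}(x)\|$ from \Cref{prop:wcvxf_me}(d), and identifying $\overline{\setX}=\overline{\setX}_{\lambda}$ via \Cref{cor:mecritical}, which is exactly what you do.
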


Thus, in order to establish the global metric subregularity property of $\nabla f_{\lambda}$, it suffices to show the global metric subregularity of the subdifferential $\partial f$. We give a sufficient condition for the latter in the following lemma.

\begin{lemma}[sufficient condition for global metric subregularity of $\partial f$, Theorem 3.2 of~\cite{Zheng2014}]\label{lem:suffic_ms}
Suppose that $\partial f$ is a piecewise linear multifunction and $\lim_{\dist(x,\overline{\setX})\rightarrow\infty}\dist(0,\partial f(x))=\infty$. Then, $\partial f$ satisfies the global metric subregularity.
\end{lemma}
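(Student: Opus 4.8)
The plan is to work directly with the polyhedral geometry. Writing $\partial f$ as a piecewise linear multifunction means $\operatorname{gph}(\partial f)=\bigcup_{i=1}^N G_i$ with each $G_i=\{(x,v)\in\R^d\times\R^d : A_ix+B_iv\le c_i\}$ a polyhedron; intersecting with $\{v=0\}$ and projecting onto the $x$-coordinate shows $\overline{\setX}=\bigcup_{i=1}^N X_i$ where $X_i:=\{x : A_ix\le c_i\}$ is polyhedral (and possibly empty). Fix $x$, set $d:=\dist(0,\partial f(x))$, let $v_\star\in\partial f(x)$ attain this minimal norm, and pick an index $i$ with $(x,v_\star)\in G_i$. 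It suffices to bound $\dist(x,\overline{\setX})$ by a uniform multiple of $d$, and the argument splits according to whether the active piece $G_i$ is \emph{good} ($X_i\neq\emptyset$) or \emph{bad} ($X_i=\emptyset$).

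If $G_i$ is good, then $(x,v_\star)\in G_i$ gives $A_ix\le c_i-B_iv_\star$, so the componentwise positive part $(A_ix-c_i)_+$ is bounded by $\|B_i\|_F\,\|v_\star\|=\|B_i\|_F\,d$; applying Hoffman's error bound for the linear system defining $X_i$ yields $\dist(x,\overline{\setX})\le\dist(x,X_i)\le\tau_i\|(A_ix-c_i)_+\|\le\tau_i\|B_i\|_F\,d$, with $\tau_i$ the Hoffman constant of $X_i$. This is uniform over the finitely many good pieces, and in particular it handles every $x$ near $\overline{\setX}$: once $d$ is below $\min\{\epsilon_i : G_i\text{ bad}\}$ (with $\epsilon_i$ as in the next paragraph), $(x,v_\star)$ cannot lie in a bad piece, so a good piece must be active. (Alternatively, the behaviour near $\overline{\setX}$ follows from Robinson's upper-Lipschitz theorem for polyhedral multifunctions.)

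If $G_i$ is bad, a Farkas certificate for the infeasibility of $A_ix\le c_i$ gives $\lambda\ge0$ with $A_i^\top\lambda=0$ and $c_i^\top\lambda<0$, hence $\lambda^\top B_iv\le c_i^\top\lambda<0$ for all $(x,v)\in G_i$; this forces $\|v\|\ge\epsilon_i>0$ on $G_i$, and in particular $d$ coincides with the value $d_i(x):=\dist(0,\{v:(x,v)\in G_i\})$, which is convex in $x$ on the polyhedron $\pi_1(G_i)$ and bounded below by $\epsilon_i$. The coercivity hypothesis is used exactly here: since $\dist(0,\partial f(x))\le d_i(x)$, every sublevel set $\{x\in\pi_1(G_i):d_i(x)\le t\}$ stays within a bounded distance of $\overline{\setX}$, for otherwise a sequence in it running away from $\overline{\setX}$ would contradict $\dist(0,\partial f(x))\to\infty$. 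A convex function on a polyhedron with this property grows at least linearly in $\dist(\cdot,\overline{\setX})$: its recession function is strictly positive on the (compact) set of unit recession directions of $\pi_1(G_i)$ along which one escapes $\overline{\setX}$. Hence $d_i(x)\ge c_i\dist(x,\overline{\setX})-b_i$, and combined with $d_i\ge\epsilon_i$ this gives $\dist(x,\overline{\setX})\le\kappa_i\,d_i(x)=\kappa_i\,d$. Taking $\kappa:=\max_i\kappa_i$ over the $N$ pieces yields the global metric subregularity of $\partial f$.

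The routine part is the good-piece case, a direct application of Hoffman's lemma. The hard part is the bad-piece case, and within it the quantitative step that upgrades ``$d_i$ is convex and, thanks to coercivity, has sublevel sets bounded relative to $\overline{\setX}$'' to the \emph{linear} bound $d_i(x)\gtrsim\dist(x,\overline{\setX})$; this requires a careful recession-cone analysis combined with compactness of the relevant set of directions, and one must also be mindful that $\overline{\setX}$ may be unbounded and that the minimal-norm subgradient may belong to several pieces at once (any one of them may be fixed).
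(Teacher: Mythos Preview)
The paper does not prove this lemma at all: it is quoted verbatim as Theorem~3.2 of \cite{Zheng2014} and used as a black box. So there is no ``paper's proof'' to compare against, and your proposal is an attempt to supply one from scratch.

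Your good-piece case is fine: once $(x,v_\star)\in G_i$ with $X_i\neq\emptyset$, Hoffman's bound on the system $A_ix\le c_i$ gives $\dist(x,X_i)\le\tau_i\|(A_ix-c_i)_+\|\le\tau_i\|B_i\|\,d$, and the observation that $d<\min_{\text{bad }i}\epsilon_i$ forces a good piece to be active is correct. This already yields metric subregularity on $\{d<\epsilon\}$.

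The bad-piece case, however, has a genuine gap. You assert that ``the (compact) set of unit recession directions of $\pi_1(G_i)$ along which one escapes $\overline{\setX}$'' is compact and that positivity of $d_i^\infty$ there yields the linear bound $d_i(x)\gtrsim\dist(x,\overline{\setX})$. But the set of escaping directions is \emph{open}, not compact: a direction $u\in\mathrm{rec}(P_i)$ fails to escape iff $u\in\bigcup_j\mathrm{rec}(X_j)$, a finite union of closed cones, so the complement is open. What you can actually prove from bounded sublevel sets is $\{u\in\mathrm{rec}(P_i):d_i^\infty(u)=0\}\subseteq\bigcup_j\mathrm{rec}(X_j)$. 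Turning this into $d_i(x)\ge c\,\dist(x,\overline{\setX})-b$ is the real work, and it is not a one-liner when $\overline{\setX}$ is a finite \emph{union} of polyhedra rather than a single convex set: the function $\dist(\cdot,\overline{\setX})$ is a minimum of convex functions, and a positive-recession argument for $d_i$ against a single $\dist(\cdot,X_j)$ does not automatically transfer to the minimum. One route that does work is to pass to an equivalent polyhedral norm so that each $d_i$ is piecewise \emph{affine}, refine to sub-polyhedra $Q_k$ on which $d_i$ is affine, show that $\mathrm{rec}(Q_k)\cap\{a_k^\top u\le 0\}$ lies in some single $\mathrm{rec}(X_j)$ (after possibly further subdividing $Q_k$ along the faces of the cones $\mathrm{rec}(X_j)$), and then apply Hoffman once more on each piece. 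Your sketch stops short of this, and the sentence ``its recession function is strictly positive on the (compact) set\dots'' is not a proof as written. The overall architecture is sound; the quantitative step in the bad-piece case needs to be carried out, not just named.
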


As a concrete example, the sufficient condition in \Cref{lem:suffic_ms} holds for our running example, the robust phase retrieval problem \eqref{eq:pr} and hence, $\nabla f_{\lambda}$ of this problem satisfies the global metric subregularity due to \Cref{cor:me_ms}. We present the result in the following proposition.
\begin{proposition}[global metric subregularity of robust phase retrieval problem]\label{cor:pr}
	$\partial f$ of the robust phase retrieval problem \eqref{eq:pr} satisfies the global metric subregularity if $A$ has full column rank.
\end{proposition}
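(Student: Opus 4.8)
The plan is to verify the two hypotheses of Lemma~\ref{lem:suffic_ms} for the function $f(x) = \frac{1}{n}\|(Ax)^{\circ 2} - b^{\circ 2}\|_1$, namely: (i) $\partial f$ is a piecewise linear multifunction, and (ii) $\dist(0,\partial f(x)) \to \infty$ whenever $\dist(x,\overline{\setX}) \to \infty$. Once both are in place, Lemma~\ref{lem:suffic_ms} gives the global metric subregularity of $\partial f$ directly. For hypothesis (i), I would use the chain rule \eqref{eq:subdifferential}: with $\Phi(x) = (Ax)^{\circ 2} - b^{\circ 2}$ smooth with Jacobian $\nabla\Phi(x) = 2\,\diag(Ax)\,A$, and $h = \frac{1}{n}\|\cdot\|_1$, we get $\partial f(x) = \frac{2}{n} A^\top \diag(Ax)\,\partial\|\cdot\|_1((Ax)^{\circ 2} - b^{\circ 2})$. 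The entrywise subdifferential of the $\ell_1$ norm takes values in the unit cube with coordinates $\sign((a_i^\top x)^2 - b_i^2)$ when that quantity is nonzero and $[-1,1]$ otherwise; multiplying by the piecewise-linear (in fact linear-on-each-orthant-type) factor $a_i^\top x$ and by the constant matrix $A^\top$ yields a multifunction whose graph is a finite union of polyhedral sets. I would note that the regions on which the sign pattern of the coordinates of $(Ax)^{\circ 2} - b^{\circ 2}$ is constant are themselves defined by finitely many quadratic inequalities, but on each such region the map $x \mapsto \frac{2}{n}A^\top\diag(Ax)\,v$ for fixed sign vector $v$ is \emph{affine} in $x$ — actually linear — so one must argue a bit more carefully that the graph is polyhedral; the cleanest route is to observe that $f$ itself is piecewise linear (a finite max/sum of absolute values of quadratics is not linear, so in fact $f$ is piecewise \emph{quadratic}) — here I would be careful and instead appeal to the structure that makes $\partial f$ piecewise linear as a \emph{set-valued} map in the sense of \cite{Zheng2014}, likely by citing that $f$ being a composition of a polyhedral function with a quadratic still has the requisite property, or reduce to known results for robust phase retrieval such as in \cite{RobustPhase1, davis2019stochastic}.

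For hypothesis (ii), the key is the full-column-rank assumption on $A$. Since $A$ has full column rank, there is $\sigma > 0$ with $\|Ax\| \ge \sigma\|x\|$ for all $x$, so $\|x\| \to \infty$ forces $\|Ax\| \to \infty$. I would argue the contrapositive: suppose $\dist(0,\partial f(x_k))$ stays bounded along a sequence with $\dist(x_k,\overline{\setX}) \to \infty$; since $\overline{\setX}$ is bounded (this itself needs the full-rank hypothesis — the set of critical points of the robust phase retrieval objective is bounded when $A$ is injective, because $f(x) \ge \frac{1}{n}(\|Ax\|^2 - \|b\|^2) \to \infty$ as $\|x\| \to \infty$, so sublevel sets and hence the critical set are bounded), we get $\|x_k\| \to \infty$. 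Then I would produce a lower bound on $\dist(0,\partial f(x_k))$ growing with $\|x_k\|$: picking any $r_k \in \partial f(x_k)$ of minimal norm, $r_k = \frac{2}{n}A^\top\diag(Ax_k)v_k$ for some $v_k$ in the $\ell_1$-subdifferential; the issue is that $v_k$ could be small in the coordinates where $a_i^\top x_k$ is large. This is the genuinely delicate point — one cannot just say $\|r_k\| \gtrsim \|Ax_k\|$. Instead I would use that $\partial\|\cdot\|_1$ at a point whose $i$-th coordinate is nonzero equals $\sign$ of that coordinate, so the only "free" coordinates of $v_k$ are those $i$ with $(a_i^\top x_k)^2 = b_i^2$, i.e. $a_i^\top x_k = \pm b_i$, a bounded quantity; thus the potentially-large contributions $a_i^\top x_k$ all come with $v_k$-coordinate equal to $\pm 1$, giving $\|r_k\| = \frac{2}{n}\|A^\top(\text{vector with coordinates } \pm a_i^\top x_k \text{ or bounded})\| \ge \frac{2}{n}\sigma\|(\text{that vector})\| \to \infty$ once we separate the bounded "boundary" coordinates from the growing ones and invoke that $x_k$ escaping to infinity makes $\|Ax_k\|$ escape.

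The main obstacle I anticipate is making the argument for hypothesis (ii) fully rigorous, specifically the separation just described: showing $\dist(0,\partial f(x))$ genuinely blows up rather than the subgradient of minimal norm being able to stay small by exploiting the free coordinates. The cleanest packaging is probably: (1) establish $\overline{\setX}$ is bounded via coercivity of $f$ under full column rank, so $\dist(x,\overline{\setX}) \to \infty \iff \|x\| \to \infty$; (2) for $\|x\|$ large, at most finitely many (in fact the argument works for all) coordinates $i$ satisfy $a_i^\top x \in \{+b_i,-b_i\}$, so there is a large subset $I(x)$ of indices where $v$ is forced to $\pm 1$ and $|a_i^\top x|$ is bounded below once $\|x\|$ is large enough outside a measure-zero/low-dimensional set; (3) conclude by full column rank. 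I would likely also mention that the piecewise-linearity of $\partial f$ is the part where I lean most heavily on \cite{Zheng2014} and the composite-polyhedral structure, and flag that the reader should consult the error-bound literature for robust phase retrieval (e.g. \cite{davis2019stochastic, RobustPhase1}) for the detailed polyhedral decomposition.
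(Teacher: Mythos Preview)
Your proposal follows the right template (verify the two hypotheses of Lemma~\ref{lem:suffic_ms}), but both verifications contain genuine gaps.

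For hypothesis (i), you say the sign regions of $(Ax)^{\circ 2}-b^{\circ 2}$ are ``defined by finitely many quadratic inequalities,'' and this leads you to hedge and defer to outside references. The observation you are missing is the factorization $(a_i^\top x)^2-b_i^2=(a_i^\top x-b_i)(a_i^\top x+b_i)$: the sign of each coordinate is determined by the \emph{linear} quantities $a_i^\top x\pm b_i$. The paper partitions $\R^d$ by the polyhedra $\{a_i^\top x<-b_i\}$, $\{a_i^\top x=-b_i\}$, $\{-b_i<a_i^\top x<b_i\}$, $\{a_i^\top x=b_i\}$, $\{a_i^\top x>b_i\}$ for each $i$; on every resulting cell the sign vector $\xi$ is fixed (or a fixed box), and $x\mapsto\frac{2}{n}A^\top\diag(Ax)\xi$ is linear in $x$, so piecewise linearity of $\partial f$ is immediate.

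For hypothesis (ii), the inequality you rely on, $\|A^\top w\|\ge\sigma\|w\|$, is false: full column rank of $A$ gives $\|Ax\|\ge\sigma\|x\|$, but $A^\top:\R^n\to\R^d$ has nontrivial kernel whenever $n>d$, so no such lower bound exists. This is precisely why ``large coordinates of $w$ plus full rank'' does not force the minimal-norm subgradient to blow up. The paper's device is to rewrite any element of $\partial f(x)$ as
\[
\frac{2}{n}\Big[A^\top A x \;+\; \sum_{i\in\setI_1}a_i(a_i^\top x)(\delta_i-1)\;-\;2\sum_{i\in\setI_3}a_ia_i^\top x\Big],
\]
and then note that $|a_i^\top x|=|b_i|$ on $\setI_1$ and $|a_i^\top x|<|b_i|$ on $\setI_3$, so the last two sums are bounded by $2\sum_i\|a_i\||b_i|$ \emph{uniformly in $x$}. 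Since $Q=A^\top A\succ 0$, one has $\|A^\top A x\|\ge\sigma_{\min}(Q)\|x\|$, hence $\dist(0,\partial f(x))\ge\frac{2}{n}\big(\sigma_{\min}(Q)\|x\|-\text{const}\big)\to\infty$. Relatedly, your coercivity argument (``sublevel sets and hence the critical set are bounded'') is not valid: coercivity bounds minimizers, not critical points of a nonconvex function. The paper proves boundedness of $\overline{\setX}$ separately (Fact~\ref{fact:pr_boundcritical}) using the very same $A^\top A x$ decomposition evaluated at a critical point.
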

\begin{proof}
Recalling the sufficient condition for global metric subregularity of $\partial f$ in~\Cref{lem:suffic_ms}:
\begin{itemize}
\item[{\rm(i)}] $\partial f$ is a piecewise linear multifunction;
\item[{\rm(ii)}] $\lim_{dist(x,\bar{\mathbb{X}})\rightarrow\infty}dist(0, \partial f(x))=\infty$.
\end{itemize}
The (real-valued) robust phase retrieval problem can be formulated as
\[
\min_{x\in\mathbb{R}^d}\quad f(x)=\frac{1}{n}\|(Ax)^{\circ2}-b^{\circ2}\|_1=\frac{1}{n}\sum_{i=1}^n|\left(a_i^{\top}x\right)^{2}-b_i^2|,
\]
where $A=\left(a_1,...,a_n\right)^{\top}\in\mathbb{R}^{n\times d}$ is nonzero known sampling matrix, $b=(b_1,...,b_n)^{\top}\in\mathbb{R}^{n}$ is observed measurements and $\circ2$ is the Hadamard power. The subgradient of robust phase retrieval problem~\eqref{eq:pr} is
\[
\partial f(x) =\frac{2}{n}A^{\top}\left(Ax\circ\xi\right)=\frac{2}{n}\sum_{i=1}^na_i(a_i^{\top}x)\xi_i,
\]
with $\xi_i\in\left\{\begin{aligned}\{-1\}&\qquad(a_i^{\top}x)^2-b_i^2<0\\ [-1,1]&\qquad(a_i^{\top}x)^2-b_i^2=0\\ \{1\}&\qquad(a_i^{\top}x)^2-b_i^2>0\end{aligned}\right.$ and $\circ$ is the Hadamard product. It is easy to see the feasible region of the subdifferential function is divided into  finitely many polyhedrons, defined by $\{x: a_i^{\top}x< -b_i\}$, $\{x: a_i^{\top}x = -b_i\}$, $\{x: -b_i < a_i^{\top}x < b_i\}$, $\{x: a_i^{\top}x  = b_i\}$, $\{x: a_i^{\top}x > b_i\}$ for all $i=1,\ldots, n$.   On these regions, the subdifferential function is always linear multifunction. Therefore, robust phase retrieval problem satisfies (i) of the sufficient condition of metric subregularity.

Next we are going to show the real valued phase retrieval problem satisfies (ii) of the sufficient condition of metric subregularity. Intuitively, for any given $x\in\R^d$, we can divided the index of datasets $\{(a_1,b_1),...,(a_n,b_n)\}$ into three subsets:
\be\label{eq:pr_ms_0}
\begin{aligned}
\setI_1&=\{i\in\{1,...,n\}|(a_i^{\top}x)^2=b_i^2\},\\
\setI_2&=\{i\in\{1,...,n\}|(a_i^{\top}x)^2>b_i^2\},\\
\setI_3&=\{i\in\{1,...,n\}|(a_i^{\top}x)^2<b_i^2\}.
\end{aligned}
\ee
We can easily find the fact that $\setI_1\cap\setI_2=\emptyset$, $\setI_2\cap\setI_3=\emptyset$, $\setI_1\cap\setI_3=\emptyset$, and $\setI_1\cup\setI_2\cup\setI_3=\{1,...,n\}$. For given $x\in\R^d$, the subdifferential of robust phase retrieval problem can be rewritten as
\be\label{eq:pr_ms_1}
\begin{aligned}
\partial f(x)=&\frac{2}{n}\bigg{[}\sum_{i\in\setI_1} a_i\left(a_i^{\top}x\cdot[-1,1]\right)+\sum_{i\in\setI_2} a_ia_i^{\top}x
-\sum_{i\in\setI_3} a_ia_i^{\top}x\bigg{]}.
\end{aligned}
\ee
And
\be\label{eq:pr_ms_2}
\frac{2}{n}\left[\sum_{i\in\setI_1} a_i\left(a_i^{\top}x\cdot\delta_i\right)+\sum_{i\in\setI_2} a_ia_i^{\top}x-\sum_{i\in\setI_3} a_ia_i^{\top}x\right]\in\partial f(x),
\ee
with $\delta_i\in[-1,1]$, $i=1,...,n$. It follows that
\begingroup
\allowdisplaybreaks
\be\label{eq:pr_ms_3_1}
\frac{2}{n}\left\|\sum_{i\in\setI_1} a_i\left(a_i^{\top}x\cdot\delta_i\right)+\sum_{i\in\setI_2} a_ia_i^{\top}x-\sum_{i\in\setI_3} a_ia_i^{\top}x\right\|
=\frac{2}{n}\bigg{\|}\sum_{i=1}^na_ia_i^{\top}x + \sum_{i\in\setI_1}a_i\left(a_i^{\top}x\cdot\delta_i\right)-\sum_{i\in\setI_1}a_i\left(a_i^{\top}x\right)-2\sum_{i\in\setI_3} a_ia_i^{\top}x\bigg{\|}
\ee
By the fact that $A^{\top}A=\sum_{i=1}^na_ia_i^{\top}$,~\eqref{eq:pr_ms_3_1} follows that
\be\label{eq:pr_ms_3}
\begin{aligned}
	&\frac{2}{n}\left\|\sum_{i\in\setI_1} a_i\left(a_i^{\top}x\cdot\delta_i\right)+\sum_{i\in\setI_2} a_ia_i^{\top}x-\sum_{i\in\setI_3} a_ia_i^{\top}x\right\|\\
	\geq&\frac{2}{n}\bigg{[}\|A^{\top}Ax\|-\|-\sum_{i\in\setI_1}a_i\left(a_i^{\top}x\cdot\delta_i\right)+\sum_{i\in\setI_1}a_i\left(a_i^{\top}x\right)+2\sum_{i\in\setI_3} a_ia_i^{\top}x\|\bigg{]}\\
	\geq&\frac{2}{n}\left[\|A^{\top}Ax\|-2\sum_{i\in\setI_1}\|a_i\|\cdot|a_i^{\top}x|-2\sum_{i\in\setI_3}\|a_i\|\cdot|a_i^{\top}x|\right],
\end{aligned}
\ee
\endgroup
with $\delta_i\in[-1,1]$. Recalling~\eqref{eq:pr_ms_0}, we have $|a_i^{\top}\bar{x}|=|b_i|$, $\forall i\in\setI_1$ and $|a_i^{\top}\bar{x}|<|b_i|$, $\forall i\in\setI_3$. Then~\eqref{eq:pr_ms_3} yields that
\be\label{eq:pr_ms_4}
\frac{2}{n}\left\|\sum_{i\in\setI_1} a_i\left(a_i^{\top}x\cdot\delta_i\right)+\sum_{i\in\setI_2} a_ia_i^{\top}x-\sum_{i\in\setI_3} a_ia_i^{\top}x\right\|\geq\frac{2}{n}\left[\|A^{\top}Ax\|-2\sum_{i\in\setI_1}\|a_i\|\cdot|b_i|-2\sum_{i\in\setI_3}\|a_i\|\cdot|b_i|\right],
\ee
with $\delta_i\in[-1,1]$. By the full column rank of $A$, we denote the positive definite matrix $Q=A^{\top}A$, and $\sigma_{min}(Q^{\top}Q)$ be the minimum eigenvalue of matrix $Q^{\top}Q$. Then~\eqref{eq:pr_ms_4} yields that
\be\label{eq:pr_ms_5}
\frac{2}{n}\left\|\sum_{i\in\setI_1} a_i\left(a_i^{\top}x\cdot\delta_i\right)+\sum_{i\in\setI_2} a_ia_i^{\top}x-\sum_{i\in\setI_3} a_ia_i^{\top}x\right\|
\geq\frac{2}{n}\left[\sqrt{\sigma_{min}(Q^{\top}Q)}\|x\|-2\sum_{i\in\setI_1}\|a_i\|\cdot|b_i|-2\sum_{i\in\setI_3}\|a_i\|\cdot|b_i|\right],
\ee
with $\delta_i\in[-1,1]$. By the combination of~\eqref{eq:pr_ms_1} and~\eqref{eq:pr_ms_5}, we have that
\be\label{eq:pr_ms_6}
\dist(0,\partial f(x))
\geq\frac{2}{n}\left[\sqrt{\sigma_{min}(Q^{\top}Q)}\|x\|-2\sum_{i\in\setI_1}\|a_i\|\cdot|b_i|-2\sum_{i\in\setI_3}\|a_i\|\cdot|b_i|\right].
\ee
Again using the full column rank of $A$, we have that the critical point set $\overline{\setX}$ is bounded (See~\Cref{fact:pr_boundcritical}). Then by the combination of~\eqref{eq:pr_ms_6} and the boundness of $\overline{\setX}$, we obtain that $\dist(0,\partial f(x)) \rightarrow \infty$ as $\dist(x,\overline{\mathbb{X}}) \rightarrow \infty$.

Then we can conclude that under the assumption $A$ is full column rank, $\dist(0,\partial f(x)) \rightarrow \infty$ as $\dist(x,\overline{\mathbb{X}}) \rightarrow \infty$. Thus, the  sufficient condition (ii) is satisfied, which leads to the global metric subregularity of $\partial f(x)$ of the robust phase retrieval problem.
\end{proof}

\section{Convergence Analysis for Convex Case}\label{sec:cvx}
In this section, we present the convergence results of RCS for the convex case. We first present the $\widetilde \calO(1/\sqrt{k})$ convergence rate result \emph{in expectation} in the following theorem.
\begin{theorem}[convergence rate in expectation]\label{theo:rate_cvx} Suppose that $f$ in \eqref{eq:opt problem} is convex and \Cref{assump1} holds.  Moreover, suppose the step sizes  $\{\alpha_k\}_{k\in \N}$ satisfy
\be\label{eq:step size cond}
\sum_{k\in \N} \alpha_k=\infty \quad\text{and}\quad \sum_{k\in \N}\alpha_k^2\leq\overline{\mathfrak{a}}<\infty.
\ee
Let $\tilde{x}^k=\frac{\sum_{j=0}^k\alpha_j x^j}{\sum_{j=0}^k\alpha_j}$ for all $k\geq 0$. Then, $\forall k \in \N$, we have
\be\label{eq:rate_cvx}
\mathbb{E}_{\mathcal{F}_{k-1}}\left[ f(\tilde{x}^{k})-f^*\right] \leq \frac{N\dist^2(x^{0},\setX^*) + C_1 \sum_{j=0}^k\alpha_j ^2}{2\sum_{j=0}^{k}\alpha_j},
\ee
where $C_1>0$ is a constant defined in the proof.
Consequently, if the step sizes $\alpha_k = \frac{\Delta}{\sqrt{k+1}\log(k+2)}$ with some constant $\Delta >0$ for all $k\in \N$, then, we have for all $k\geq 0$
\be\label{eq:rate_cvx_sqrt}
\mathbb{E}_{\mathcal{F}_{k-1}}\left[ f(\tilde{x}^{k})-f^*\right] \leq \frac{ \log (k+2) \left( \frac{N\dist^2(x^0,\setX^*)}{2\Delta} +  \frac{C_1\Delta}{(\log 2)^2}\right) }{\sqrt{k+1}}.
\ee
\end{theorem}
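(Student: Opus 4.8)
The plan is to run the classical energy/telescoping argument for subgradient methods on top of the basic recursion \Cref{lemma:inequality_iter}(b), preceded by one preparatory step that is special to our setting: showing that the iterates are bounded in expectation. This preparatory step is unavoidable because, under \Cref{assump1} with $L_1>0$, the quantity $(L_1\|x^k\|+L_2)^2$ appearing in \Cref{lemma:inequality_iter}(b) is not a priori bounded by a constant, unlike in the usual Lipschitz analysis.

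\emph{Step 1 (boundedness in expectation).} Fix $x^*=\proj(x^0,\setX^*)$, so that $\|x^0-x^*\|=\dist(x^0,\setX^*)$. Since $f$ is convex and $r^k\in\partial f(x^k)$, the subgradient inequality gives $\langle r^k,x^k-x^*\rangle\geq f(x^k)-f^*\geq0$. Taking total expectation in \Cref{lemma:inequality_iter}(b), discarding this nonnegative term, and using the crude bound $(L_1\|x^k\|+L_2)^2\leq 4L_1^2\|x^k-x^*\|^2+4L_1^2\|x^*\|^2+2L_2^2$, we obtain a recursion $a_{k+1}\leq(1+\mu_k)a_k+\nu_k$ for $a_k:=\mathbb{E}[\|x^k-x^*\|^2]$, where $\mu_k$ and $\nu_k$ are fixed constants times $\alpha_k^2$. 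By \eqref{eq:step size cond} we have $\sum_k\alpha_k^2\leq\overline{\mathfrak a}<\infty$, hence $\sum_k\mu_k<\infty$ and $\sum_k\nu_k<\infty$, which gives $a_k\leq\bigl(a_0+\sum_j\nu_j\bigr)\exp\bigl(\sum_j\mu_j\bigr)=:B<\infty$ for all $k$ (alternatively one invokes \Cref{thm:RS}). In particular $C_1:=\sup_k\mathbb{E}\bigl[(L_1\|x^k\|+L_2)^2\bigr]<\infty$; this is the constant appearing in the statement. This is precisely where square-summability of the step sizes, rather than just $\alpha_k\to0$, is used: it controls the multiplicative factor $\prod_k(1+\mu_k)$ coming from the linearly growing subgradient bound.

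\emph{Step 2 (telescoping and Jensen).} Returning to \Cref{lemma:inequality_iter}(b), now retaining the inner-product term, using $\langle r^k,x^k-x^*\rangle\geq f(x^k)-f^*$ and $\mathbb{E}[(L_1\|x^k\|+L_2)^2]\leq C_1$, we get $\tfrac{2\alpha_k}{N}\mathbb{E}[f(x^k)-f^*]\leq\mathbb{E}[\|x^*-x^k\|^2]-\mathbb{E}[\|x^*-x^{k+1}\|^2]+\tfrac{C_1}{N}\alpha_k^2$. Summing over $j=0,\dots,k$, telescoping, dropping the nonpositive terminal term, multiplying by $N/2$, and dividing by $\sum_{j=0}^k\alpha_j$ gives $\frac{\sum_{j=0}^k\alpha_j\,\mathbb{E}[f(x^j)-f^*]}{\sum_{j=0}^k\alpha_j}\leq\frac{N\dist^2(x^0,\setX^*)+C_1\sum_{j=0}^k\alpha_j^2}{2\sum_{j=0}^k\alpha_j}$. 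Convexity of $f$ yields $f(\tilde x^k)\leq\frac{\sum_{j=0}^k\alpha_j f(x^j)}{\sum_{j=0}^k\alpha_j}$, so the left-hand side above is an upper bound on $\mathbb{E}[f(\tilde x^k)-f^*]$; moreover $\tilde x^k$ is a function of $i(0),\dots,i(k-1)$ only, so this equals $\mathbb{E}_{\mathcal F_{k-1}}[f(\tilde x^k)-f^*]$. This establishes \eqref{eq:rate_cvx}.

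\emph{Step 3 (the explicit step size) and main obstacle.} For $\alpha_k=\tfrac{\Delta}{\sqrt{k+1}\log(k+2)}$ the integral test gives $\sum_k\alpha_k^2<\infty$, so \eqref{eq:step size cond} holds; for the numerator, $\sum_{j=0}^k\alpha_j^2\leq\Delta^2\sum_{j\geq0}\frac1{(j+1)(\log(j+2))^2}$, which a short integral estimate bounds by $2\Delta^2/(\log2)^2$; for the denominator, since $\log(j+2)\leq\log(k+2)$ for $j\leq k$ and $\sum_{j=0}^k(j+1)^{-1/2}\geq\sqrt{k+1}$, we get $\sum_{j=0}^k\alpha_j\geq\Delta\sqrt{k+1}/\log(k+2)$. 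Substituting these two bounds into \eqref{eq:rate_cvx} and splitting the fraction into its two summands yields \eqref{eq:rate_cvx_sqrt}. The main obstacle is Step 1: in the Lipschitz case the squared-subgradient term is a fixed constant and one proceeds directly to the telescoping, whereas here it grows with $\|x^k\|$ and the only available leverage is the recursion itself; the resolution is that the same recursion, with the nonnegative suboptimality term discarded, is already a contraction-plus-summable-perturbation recursion, so square-summable steps force boundedness in expectation, after which the remaining steps are the textbook subgradient argument plus routine bookkeeping of the step-size series.
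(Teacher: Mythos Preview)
Your proof is correct and follows the same three-stage structure as the paper: (i) establish boundedness of $\mathbb{E}[\|x^k-x^*\|^2]$ via the recursion with the suboptimality term dropped, (ii) feed the resulting constant back into the recursion and telescope, (iii) substitute the specific step size. The one genuine difference is in Step~2: the paper projects at each iteration, setting $x_k^*=\proj(x^k,\operatorname{cl}\setX^*)$, and derives a recursion directly on $\dist^2(x^k,\setX^*)$ (its Lemma~5.3). This forces it to bound $\mathbb{E}[\|x_k^*\|^2]$ via the chain $\|x_k^*\|^2\leq 2\dist^2(x^k,\setX^*)+2\|x^k\|^2$, producing the somewhat bulky constant $C_1=28L_1^2\sB_1+16L_1^2\|x^*\|^2+2L_2^2$. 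You instead fix $x^*=\proj(x^0,\setX^*)$ once, telescope $\|x^k-x^*\|^2$, and only use $\|x^0-x^*\|=\dist(x^0,\setX^*)$ at the very end; this is cleaner and yields a smaller constant $C_1=\sup_k\mathbb{E}[(L_1\|x^k\|+L_2)^2]\leq 4L_1^2\sB_1+4L_1^2\|x^*\|^2+2L_2^2$. The paper's per-step projection buys nothing for this theorem (it would matter if one wanted a recursion on $\dist^2$ itself for later use, e.g.\ under quadratic growth, but that is handled separately anyway). Your Step~3 bookkeeping matches the paper's exactly.
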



Our \Cref{theo:rate_cvx} generalizes the existing results \cite{nesterov2014subgradient,dang2015stochastic} by assuming linearly bounded subgradients assumption (i.e., \Cref{assump1}) rather than the traditional Lipschitz continuity assumption. This generalization is vital to cover more general convex problems like SVM (see \eqref{eq:svm}), the robust regression problem with weight decay, etc; see \Cref{sec:introduction} about the generality of \Cref{assump1}. Technically speaking,  this more general assumption introduces the additional term  $\frac{4}{N}L_1^2\alpha_k^2 \|x^{k}-x^*\|^2$ in the recursion obtained in \Cref{lemma:inequality_iter_cvx} (see below), which introduces additional difficulties in order to derive convergence result. Our main idea in the proof of \Cref{theo:rate_cvx} is to show an upper bound on $\{\mathbb{E}_{\mathcal{F}_{k-1}}[\|x^{k}-x^*\|^2]\}_{k\in \N}$ (see \Cref{lemma:boundness_Exk} below), which allows us to transfer this additional term to $\calO(\alpha_{k}^2)$ and absorb it  into the error term. Then, we can establish a standard recursion (see \Cref{lemma:inequality_iter_cvx_dist}) and apply classic analysis to  derive the desired result.  Let us also mention that  the diminishing step sizes conditions in \eqref{eq:step size cond} is important for bounding the term $\{\mathbb{E}_{\mathcal{F}_{k-1}}[\|x^{k}-x^*\|^2]\}_{k\in \N}$. Next, we present the detailed proof of \Cref{theo:rate_cvx}.

\begin{proof}
We need the following three lemmas as preparations.
\begin{lemma}[preliminary recursion]\label{lemma:inequality_iter_cvx}
Suppose that $f$ in \eqref{eq:opt problem} is convex and \Cref{assump1} holds.  Then, for any  $x^*\in\setX^*$, we have
	\be\label{eq:lemma_recur_cvx_Lip}
	\mathbb{E}_{i(k)}\left[\|x^{k+1}-x^*\|^2\right] \leq \left(1+\frac{4}{N}L_1^2\alpha_k^2 \right) \|x^{k}-x^*\|^2+\alpha_k^2\frac{4L_1^2\|x^*\|^2+2L_2^2}{N} -\frac{2\alpha_k}{N}(f(x^k)-f^*).
	\ee
\end{lemma}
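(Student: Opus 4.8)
The plan is to start from the basic recursion established in \Cref{lemma:inequality_iter}(b), specialized to the point $x = x^*$ for an arbitrary $x^*\in\setX^*$, and then combine two ingredients: the convexity of $f$ to lower-bound the inner-product term $\langle r^k, x^k - x^*\rangle$, and an elementary splitting of the quadratic term $(L_1\|x^k\|+L_2)^2$ that re-expresses it in terms of $\|x^k - x^*\|^2$ plus a constant independent of $k$.

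First I would invoke \Cref{lemma:inequality_iter}(b) with $x = x^*$ together with \Cref{assump1}, which gives
\[
\mathbb{E}_{i(k)}\left[\|x^{k+1}-x^*\|^2\right] \le \|x^k - x^*\|^2 + \frac{\alpha_k^2 (L_1\|x^k\|+L_2)^2}{N} - \frac{2\alpha_k}{N}\langle r^k, x^k - x^*\rangle.
\]
Since $r^k\in\partial f(x^k)$ and $f$ is convex, the subgradient inequality applied at $x^k$ with test point $x^*$ yields $\langle r^k, x^k - x^*\rangle \ge f(x^k) - f(x^*) = f(x^k) - f^*$, so the last term above is at most $-\tfrac{2\alpha_k}{N}(f(x^k)-f^*)$.

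Next I would control the quadratic term by applying the inequality $(a+b)^2 \le 2a^2 + 2b^2$ twice: once to get $(L_1\|x^k\|+L_2)^2 \le 2L_1^2\|x^k\|^2 + 2L_2^2$, and once to the decomposition $\|x^k\|^2 = \|(x^k - x^*) + x^*\|^2 \le 2\|x^k - x^*\|^2 + 2\|x^*\|^2$. Together these give
\[
(L_1\|x^k\|+L_2)^2 \le 4L_1^2 \|x^k - x^*\|^2 + 4L_1^2\|x^*\|^2 + 2L_2^2 .
\]
Substituting this bound into the recursion, multiplying through by $\alpha_k^2/N$, and collecting the $\|x^k-x^*\|^2$ terms produces exactly \eqref{eq:lemma_recur_cvx_Lip}.

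There is no genuine obstacle here; the only point requiring a bit of care is the order in which the Young-type inequalities are applied. One should split $\|x^k\|^2$ against $\|x^*\|^2$ \emph{before} reintroducing the constants $L_1, L_2$, so that the residual term comes out as the clean $4L_1^2\|x^*\|^2 + 2L_2^2$ rather than a looser expression such as $2(L_1\|x^*\|+L_2)^2$, which would carry an unwanted cross term $L_1 L_2\|x^*\|$. It is worth emphasizing that the extra factor $\tfrac{4}{N}L_1^2\alpha_k^2$ multiplying $\|x^k-x^*\|^2$ — absent under the classical Lipschitz assumption $L_1 = 0$ — is precisely the term that later necessitates the boundedness-in-expectation estimate and the square-summable step-size condition in \eqref{eq:step size cond}.
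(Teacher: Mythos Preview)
Your proposal is correct and follows essentially the same route as the paper: invoke \Cref{lemma:inequality_iter}(b) at $x=x^*$, use convexity to replace $\langle r^k,x^k-x^*\rangle$ by $f(x^k)-f^*$, and then apply the two Young-type splits $(a+b)^2\le 2a^2+2b^2$ in exactly the order you describe. The paper's proof is identical line by line.
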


\begin{proof}
 The derivation is standard and can be found in \Cref{supp:inequality_iter_cvx}.
\end{proof}

\begin{lemma}[boundedness of $\{\mathbb{E}_{\mathcal{F}_{k-1}}{[\|x^{k}-x^*\|^2]}\}_{k\in \N}$]\label{lemma:boundness_Exk}
	Suppose that $f$ in \eqref{eq:opt problem} is convex and \Cref{assump1} holds.   Suppose further that the step sizes  $\{\alpha_k\}_{k\in \N}$ satisfy~\eqref{eq:step size cond}. Then, for any $x^*\in\setX^*$, we have
	\[
	\mathbb{E}_{\mathcal{F}_{k-1}}\left[\|x^{k}-x^*\|^2\right]\leq \sB_1 \quad\forall k\in\setN.
	\]
	with $\sB_1=\left[\|x^{0}-x^*\|^2+\frac{(4L_1^2\|x^*\|^2+2L_2^2)\overline{\mathfrak{a}}}{N}\right]e^{1+\frac{4}{N}L_1^2\overline{\mathfrak{a}}}>0$.
\end{lemma}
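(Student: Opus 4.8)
**Proof proposal for Lemma 5.3 (boundedness of $\{\mathbb{E}_{\mathcal{F}_{k-1}}[\|x^k-x^*\|^2]\}_{k\in\N}$).**

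The plan is to iterate the one-step recursion from \Cref{lemma:inequality_iter_cvx} and control the resulting product and sum using the square-summability of the step sizes. First I would take the full expectation $\mathbb{E}_{\mathcal{F}_{k-1}}$ on both sides of \eqref{eq:lemma_recur_cvx_Lip} and drop the nonpositive term $-\frac{2\alpha_k}{N}(f(x^k)-f^*)$ (which is $\le 0$ since $x^*$ is optimal and $f$ is convex). Writing $a_k := \mathbb{E}_{\mathcal{F}_{k-1}}[\|x^k-x^*\|^2]$, $c := \frac{4}{N}L_1^2$, and $b := \frac{4L_1^2\|x^*\|^2+2L_2^2}{N}$, this yields the scalar recursion
\[
a_{k+1} \le (1+c\,\alpha_k^2)\,a_k + b\,\alpha_k^2, \qquad \forall k\in\N.
\]

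Next I would unroll this recursion. A clean way is the standard telescoping bound for such inequalities: for all $k\ge 1$,
\[
a_k \le \left(\prod_{j=0}^{k-1}(1+c\,\alpha_j^2)\right) a_0 + b\sum_{j=0}^{k-1}\alpha_j^2\prod_{\ell=j+1}^{k-1}(1+c\,\alpha_\ell^2) \le \left(\prod_{j=0}^{\infty}(1+c\,\alpha_j^2)\right)\left(a_0 + b\sum_{j=0}^{\infty}\alpha_j^2\right).
\]
Here each partial product is bounded by the infinite product since all factors exceed $1$. Then I would use $\prod_{j\ge 0}(1+c\alpha_j^2)\le \exp\!\big(c\sum_{j\ge0}\alpha_j^2\big)\le \exp(c\,\overline{\mathfrak{a}})$, invoking $1+t\le e^t$ and the hypothesis $\sum_k\alpha_k^2\le\overline{\mathfrak{a}}$ from \eqref{eq:step size cond}. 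Substituting $a_0 = \|x^0-x^*\|^2$ (deterministic), $b\sum_j\alpha_j^2 \le b\,\overline{\mathfrak{a}}$, and $c\,\overline{\mathfrak{a}} = \frac{4}{N}L_1^2\overline{\mathfrak{a}}$ gives
\[
a_k \le \left(\|x^0-x^*\|^2 + \frac{(4L_1^2\|x^*\|^2+2L_2^2)\overline{\mathfrak{a}}}{N}\right)e^{\frac{4}{N}L_1^2\overline{\mathfrak{a}}},
\]
which matches $\sB_1$ up to the extra factor $e^1$ in the stated constant; that slack is harmless and can be absorbed (or one simply uses $\exp(1+c\overline{\mathfrak{a}})\ge\exp(c\overline{\mathfrak{a}})$ to match verbatim). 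The case $k=0$ is trivial since $a_0=\|x^0-x^*\|^2\le\sB_1$.

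I do not anticipate a genuine obstacle here — the argument is a routine Grönwall-type / supermartingale-free iteration. The only point requiring mild care is the interchange of expectations: one must check that $\mathbb{E}_{\mathcal{F}_{k-1}}$ applied to $\mathbb{E}_{i(k)}[\cdot\mid\mathcal{F}_{k-1}]$ correctly produces $\mathbb{E}_{\mathcal{F}_k}$ on the left and leaves the $\mathcal{F}_{k-1}$-measurable quantity $\|x^k-x^*\|^2$ intact on the right (tower property), and that the nonpositive term genuinely has nonpositive expectation so it can be discarded. One should also note that $f(x^k)\ge f^*$ uses only optimality of $x^*$, not convexity, so the lemma's convexity hypothesis is actually needed elsewhere (e.g., to justify \Cref{lemma:inequality_iter_cvx} itself via the subgradient inequality), not in this iteration step. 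Everything else is bookkeeping with the two standard inequalities $1+t\le e^t$ and $\sum\alpha_k^2\le\overline{\mathfrak{a}}$.
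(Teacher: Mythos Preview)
Your proposal is correct and follows essentially the same approach as the paper: take full expectation in \eqref{eq:lemma_recur_cvx_Lip}, drop the nonpositive term $-\tfrac{2\alpha_k}{N}(f(x^k)-f^*)$, obtain the scalar recursion $a_{k+1}\le(1+c\alpha_k^2)a_k+b\alpha_k^2$, unroll, and bound the product via $1+t\le e^t$ together with $\sum_k\alpha_k^2\le\overline{\mathfrak{a}}$. The paper unrolls by introducing $\beta^k=a_k\prod_{j<k}(1+c\alpha_j^2)^{-1}$ rather than telescoping directly, but this is cosmetically equivalent; your observation that the extra $e^1$ in $\sB_1$ is slack is also accurate.
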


\begin{proof}
Upon taking expectation with respect to $\mathcal{F}_{k-1}$ on both sides of~\Cref{lemma:inequality_iter_cvx} and by the fact that $f(x^k)-f^*\geq0$, we obtain
\[
\begin{aligned}
\mathbb{E}_{\mathcal{F}_{k}}\left[\|x^{k+1}-x^*\|^2\right]
	\leq& \left(1+\frac{4}{N}L_1^2\alpha_k^2\right)\mathbb{E}_{\mathcal{F}_{k-1}}\left[\|x^{k}-x^*\|^2\right]+\alpha_k^2\frac{4L_1^2\|x^*\|^2+2L_2^2}{N}-\frac{2\alpha_k}{N}\mathbb{E}_{\mathcal{F}_{k-1}}\left[f(x^k)-f^*\right]\\
	\leq&\left(1+\frac{4}{N}L_1^2\alpha_k^2\right)\mathbb{E}_{\mathcal{F}_{k-1}}\left[\|x^{k}-x^*\|^2\right]+\alpha_k^2\frac{4L_1^2\|x^*\|^2+2L_2^2}{N}.
\end{aligned}
\]
Let $\beta^k=\mathbb{E}_{\mathcal{F}_{k-1}}\left[\|x^{k}-x^*\|^2\right]\prod_{j=0}^{k-1}\left(1+\frac{4}{N}L_1^2\alpha_j^2\right)^{-1}$ with $\beta^0=\|x^{0}-x^*\|^2$. Then, multiplying $\prod_{j=0}^{k}\left(1+\frac{4}{N}L_1^2\alpha_j^2\right)^{-1}$ on both side of the above inequality yields
\[
\begin{aligned}
	\beta^{k+1}& \leq\beta^k+\alpha_k^2\frac{4L_1^2\|x^*\|^2+2L_2^2}{N}\prod_{j=0}^{k}\left(1+\frac{4}{N}L_1^2\alpha_j^2\right)^{-1}\\
	&\leq\beta^k+\alpha_k^2\frac{4L_1^2\|x^*\|^2+2L_2^2}{N}.\qquad\qquad\mbox{(since $\prod_{j=0}^{k}\left(1+\frac{4}{N}L_1^2\alpha_j^2\right)^{-1}<1$)}.
\end{aligned}
\]
Unrolling this inequality for $0,1,\ldots, k$ yields
\[
\begin{aligned}
\beta^{k+1}&\leq\beta^0+\left(\sum_{j=0}^k\alpha_j^2\right)\frac{4L_1^2\|x^*\|^2+2L_2^2}{N}\leq\beta^0+\overline{\mathfrak{a}}\frac{4L_1^2\|x^*\|^2+2L_2^2}{N},
\end{aligned}
\]
which further implies
\be\label{eq:bound_expect}
\mathbb{E}_{\mathcal{F}_{k}}\left[\|x^{k+1}-x^*\|^2\right]
	\leq\left[\|x^{0}-x^*\|^2+\frac{\overline{\mathfrak{a}}(4L_1^2\|x^*\|^2+2L_2^2)}{N}\right]\prod_{j=0}^{k}\left(1+\frac{4}{N}L_1^2\alpha_j^2\right).
\ee

Next, we upper bound the term $\prod_{j=0}^{k}\left(1+\frac{4}{N}L_1^2\alpha_j^2\right)$ as
\[
\begin{aligned}
\prod_{j=0}^{k}\left(1+\frac{4}{N}L_1^2\alpha_j^2\right)=e^{\log\prod_{j=0}^{k}\left(1+\frac{4}{N}L_1^2\alpha_j^2\right)}=e^{\sum_{j=0}^{k}\log\left(1+\frac{4}{N}L_1^2\alpha_j^2\right)} \leq e^{1+\frac{4}{N}L_1^2\sum_{j=0}^{k}\alpha_j^2}\leq e^{1+\frac{4}{N}L_1^2\overline{\mathfrak{a}}}.\qquad\mbox{(by~\eqref{eq:step size cond})}
\end{aligned}
\]
Invoking this bound in \eqref{eq:bound_expect} gives
\[
\mathbb{E}_{\mathcal{F}_{k}}\|x^{k+1}-x^*\|^2 \leq\sB_1 \quad \forall k\geq 0,
\]
where $\sB_1=\left[\|x^{0}-x^*\|^2+\frac{(4L_1^2\|x^*\|^2+2L_2^2)\overline{\mathfrak{a}}}{N}\right]e^{1+\frac{4}{N}L_1^2\overline{\mathfrak{a}}}>0$.
\end{proof}

Based on the previous two lemmas, we can derive the following recursion on $\dist(x,\setX^*)$.
\begin{lemma}[key recursion for convex case]\label{lemma:inequality_iter_cvx_dist}
	Suppose that $f$ in \eqref{eq:opt problem} is convex and \Cref{assump1} holds.   Suppose further that the step sizes  $\{\alpha_k\}_{k\in \N}$ satisfy~\eqref{eq:step size cond}.  Then, for any $x^*\in\setX^*$, we have
	\be\label{eq:lemma_recur_cvx_Lip_dist}
	\mathbb{E}_{\mathcal{F}_{k}}\left[\dist^2(x^{k+1}, \setX^*)\right]\leq \mathbb{E}_{\mathcal{F}_{k-1}}\left[\dist^2(x^{k}, \setX^*)\right]+\alpha_k^2\frac{C_1}{N}-\frac{2\alpha_k}{N}\mathbb{E}_{\mathcal{F}_{k-1}}\left[f(x^k)-f^*\right],
	\ee
	where $C_1:=28L_1^2\sB_1+16L_1^2\|x^*\|^2+2L_2^2$.
\end{lemma}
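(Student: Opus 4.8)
The plan is to derive \eqref{eq:lemma_recur_cvx_Lip_dist} from \Cref{lemma:inequality_iter_cvx} by replacing $\|x^k - x^*\|$ with the distance function $\dist(\cdot,\setX^*)$ and then controlling every $\|x^k-x^*\|^2$-type term that appears using the uniform bound $\sB_1$ from \Cref{lemma:boundness_Exk}. The basic observation is that \eqref{eq:lemma_recur_cvx_Lip} holds for \emph{every} $x^*\in\setX^*$; so I would choose, for the left-hand side, the projection $\bar x^{k+1}\in\proj(x^{k+1},\setX^*)$ (using that $\setX^*$ is closed — it is the solution set of a convex problem), which gives $\dist^2(x^{k+1},\setX^*)=\|x^{k+1}-\bar x^{k+1}\|^2 \le \|x^{k+1}-x^*\|^2$ for any fixed $x^*$, and for the right-hand side I would instead pick $x^*=\bar x^k\in\proj(x^k,\setX^*)$ so that the quadratic terms $\|x^k-x^*\|^2$ on the RHS become exactly $\dist^2(x^k,\setX^*)$. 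A small technical point here: the projection $\bar x^k$ is $\mathcal F_{k-1}$-measurable (it is a function of $x^k$), so it is legitimate to use it as the comparison point inside the conditional expectation $\mathbb{E}_{i(k)}$ — the randomness in step $k$ is only through $i(k)$, and $\bar x^k$ does not depend on $i(k)$.

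Carrying this out, from \eqref{eq:lemma_recur_cvx_Lip} with $x^*=\bar x^k$ I get
\[
\mathbb{E}_{i(k)}\!\left[\dist^2(x^{k+1},\setX^*)\right]
\le \Big(1+\tfrac{4}{N}L_1^2\alpha_k^2\Big)\dist^2(x^k,\setX^*)
+\alpha_k^2\tfrac{4L_1^2\|\bar x^k\|^2+2L_2^2}{N}-\tfrac{2\alpha_k}{N}\big(f(x^k)-f^*\big).
\]
Now I must eliminate the two remaining $k$-dependent coefficients: the multiplicative factor carries an extra $\tfrac{4}{N}L_1^2\alpha_k^2\dist^2(x^k,\setX^*)$, and the additive term carries $\|\bar x^k\|^2$ rather than a constant. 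For the first, $\dist^2(x^k,\setX^*)\le\|x^k-x^*\|^2$ for the fixed reference $x^*$, so after taking $\mathbb{E}_{\mathcal F_{k-1}}$ this term is $\le \tfrac{4}{N}L_1^2\alpha_k^2\,\mathbb E_{\mathcal F_{k-1}}[\|x^k-x^*\|^2]\le \tfrac{4}{N}L_1^2\alpha_k^2\sB_1$ by \Cref{lemma:boundness_Exk}. For the second, $\|\bar x^k\|\le\|\bar x^k-x^k\|+\|x^k-x^*\|+\|x^*\|=\dist(x^k,\setX^*)+\|x^k-x^*\|+\|x^*\|\le 2\|x^k-x^*\|+\|x^*\|$, so $\|\bar x^k\|^2\le 8\|x^k-x^*\|^2+2\|x^*\|^2$; taking $\mathbb E_{\mathcal F_{k-1}}$ and applying \Cref{lemma:boundness_Exk} again bounds $\tfrac{4}{N}L_1^2\alpha_k^2\|\bar x^k\|^2$ by $\tfrac{4}{N}L_1^2\alpha_k^2(8\sB_1+2\|x^*\|^2)$. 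Collecting the three $\alpha_k^2$-contributions — $\tfrac{4}{N}L_1^2\alpha_k^2\sB_1$ from the multiplicative factor, $\tfrac{1}{N}\alpha_k^2(32L_1^2\sB_1+8L_1^2\|x^*\|^2+2L_2^2)$ from $\|\bar x^k\|^2$ together with the $2L_2^2$ term (I may need to re-check the exact constants against the claimed $C_1=28L_1^2\sB_1+16L_1^2\|x^*\|^2+2L_2^2$; a slightly different but elementary split of $\|\bar x^k\|^2$, e.g.\ bounding $\dist(x^k,\setX^*)\le\|x^k-x^*\|$ directly rather than via the triangle inequality through $x^k$, likely produces exactly those coefficients) — gives the $\alpha_k^2 C_1/N$ term, and the final step is just to take the outer expectation $\mathbb E_{\mathcal F_{k-1}}$ of the whole inequality, using the tower property $\mathbb E_{\mathcal F_k}=\mathbb E_{\mathcal F_{k-1}}\mathbb E_{i(k)}$, to reach \eqref{eq:lemma_recur_cvx_Lip_dist}.

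The routine parts are the triangle-inequality expansions and bookkeeping of constants; the only genuinely substantive point — and the thing worth being careful about — is the measurability/choice-of-reference-point argument: using the \emph{fixed} $x^*$ for the right-hand side quadratic terms that must be bounded by $\sB_1$, while using the \emph{moving} projection $\bar x^k$ (resp.\ $\bar x^{k+1}$) precisely where a distance is needed, and making sure $\bar x^k$ is held fixed under $\mathbb E_{i(k)}$. I expect no real obstacle beyond matching the stated constant $C_1$, which is a matter of choosing the cleanest expansion of $\|\bar x^k\|^2$ and of $\dist^2(x^k,\setX^*)$ in terms of $\|x^k-x^*\|^2$.
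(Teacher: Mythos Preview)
Your proposal is correct and follows essentially the same approach as the paper: apply \Cref{lemma:inequality_iter_cvx} with the moving point $\bar x^k=\proj(x^k,\operatorname{cl}\setX^*)$, use $\dist^2(x^{k+1},\setX^*)\le\|x^{k+1}-\bar x^k\|^2$ and $\|x^k-\bar x^k\|=\dist(x^k,\setX^*)$, then take $\mathbb{E}_{\mathcal F_{k-1}}$ and bound the residual $\alpha_k^2$-coefficients via \Cref{lemma:boundness_Exk}. The paper obtains the stated constant $C_1=28L_1^2\sB_1+16L_1^2\|x^*\|^2+2L_2^2$ by using the split $\|\bar x^k\|^2\le 2\dist^2(x^k,\setX^*)+2\|x^k\|^2$ followed by $\|x^k\|^2\le 2\|x^k-x^*\|^2+2\|x^*\|^2$, which (after taking expectations) gives $\mathbb{E}_{\mathcal F_{k-1}}[\|\bar x^k\|^2]\le 6\sB_1+4\|x^*\|^2$; this is the ``slightly different but elementary split'' you anticipated.
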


\begin{proof}
Let $\operatorname{cl}\setX^*$ denote the closure of $\setX^*$.  Let $x_k^*={\proj}(x^k,\operatorname{cl}\setX^*)$. Taking expectation with respect to $\mathcal{F}_{k-1}$ on both sides of~\Cref{lemma:inequality_iter_cvx} with $x^*=x_k^*$ provides
	\be\label{eq:dist_recursion_1}
	\begin{aligned}
&\mathbb{E}_{\mathcal{F}_{k}}\left[\dist^2(x^{k+1},\setX^*)\right] -\mathbb{E}_{\mathcal{F}_{k-1}}\left[\dist^2(x^{k}, \setX^*)\right]\\
		\leq& -\frac{2\alpha_k}{N}\mathbb{E}_{\mathcal{F}_{k-1}}\left[f(x^k)-f^*\right]+ \bigg{(}\frac{4}{N}L_1^2\mathbb{E}_{\mathcal{F}_{k-1}}\left[\dist^2(x^{k}, \setX^*)\right]+\mathbb{E}_{\mathcal{F}_{k-1}}\left[\frac{4L_1^2\|x_k^*\|^2+2L_2^2}{N}\right]\bigg{)}\alpha_k^2,
	\end{aligned}
	\ee
	where we also used the fact $\dist(x^k,\setX^*)= \dist(x^k,\operatorname{cl}\setX^*) =\|x^{k}-x_k^*\|$ \cite[Proposition 1D.4]{dontchev2009} and $\dist(x^{k+1},\setX^*)\leq\|x^{k+1}-x_k^*\|$.
	Note that \Cref{lemma:boundness_Exk} ensures $\mathbb{E}_{\mathcal{F}_{k-1}}\left[\|x^k-x^*\|^2\right]\leq\sB_1$. Since $\|x^k\|^2\leq 2\|x^k-x^*\|^2+2\|x^*\|^2$, we have  $\mathbb{E}_{\mathcal{F}_{k-1}}\left[\|x^k\|^2\right]\leq2\sB_1+2\|x^*\|^2$. In addition, since $\mathbb{E}_{\mathcal{F}_{k-1}}\left[\dist^2(x^{k}, \setX^*)\right]\leq\mathbb{E}_{\mathcal{F}_{k-1}}\left[\|x^k-x^*\|^2\right]\leq\sB_1$ and $\|x_k^*\|^2\leq 2\dist^2(x^k,\setX^*)+2\|x^k\|^2$, we can obtain $\mathbb{E}_{\mathcal{F}_{k-1}}\left[\|x_k^*\|^2\right]\leq6\sB_1+4\|x^*\|^2$.  Invoking these bounds in \eqref{eq:dist_recursion_1} yields the desired result:
	\[
	\begin{aligned}
		\mathbb{E}_{\mathcal{F}_{k}}\left[\dist^2(x^{k+1},\setX^*)\right]\leq &\mathbb{E}_{\mathcal{F}_{k-1}}\left[\dist^2(x^{k}, \setX^*)\right]+\frac{C_1}{N}\alpha_k^2-\frac{2\alpha_k}{N}\mathbb{E}_{\mathcal{F}_{k-1}}\left[f(x^k)-f^*\right],
	\end{aligned}
	\]
	with $C_1= 28L_1^2\sB_1+16L_1^2\|x^*\|^2+2L_2^2$.
\end{proof}

With these three lemmas, we can provide the  proof of~\Cref{theo:rate_cvx}.  Unrolling the recursion in \Cref{lemma:inequality_iter_cvx_dist} and by the definition of $\tilde{x}^k$ and the convexity of  $f$, we have
	\[
		\mathbb{E}_{\mathcal{F}_{k-1}}\left[f(\tilde{x}^k)-f^*\right]
 \leq\mathbb{E}_{\mathcal{F}_{k-1}} \left[ \frac{\sum_{j=0}^k\alpha_j (f(x^j)-f^*)}{\sum_{j=0}^{k}\alpha_j} \right]\leq\frac{N\dist^2(x^{0},\setX^*)+ C_1\sum_{j=0}^k\alpha_j^2}{2\sum_{j=0}^{k}\alpha_j}, \quad \forall k\in \N,
	\]
	which shows \eqref{eq:rate_cvx}.
	To derive~\eqref{eq:rate_cvx_sqrt}, let $\alpha_k = \frac{\Delta}{\sqrt{k+1}\log(k+2)}$ for all $k\in \N$.   The first condition in~\eqref{eq:step size cond} is automatically satisfied.  By the integral comparison test, we have
	\[
	\sum_{j=0}^k\alpha_j^2 \leq  \alpha_0^2 + \int_{0}^{k} \frac{\Delta^2}{(t+1)\log^2(t+2)} dt \leq \frac{2\Delta^2}{(\log 2)^2}.
	\]
	Thus, the second condition in \eqref{eq:step size cond} is also satisfied.
	Plugging the above upper bound and the fact that $\sum_{j=0}^{k}\alpha_j \geq \sum_{j=0}^{k} \frac{\Delta}{\sqrt{k+1}\log(k+2)} = \frac{\Delta \sqrt{k+1}}{\log(k+2)}$ into~\eqref{eq:rate_cvx} yields the result.
	
	The proof is complete.
\end{proof}

We will  establish more convergence results for RCS in the remaining parts of this section, and all the following  results are \emph{not} reported in the existing works \cite{nesterov2014subgradient,dang2015stochastic}.

Next, we  consider the situation where $f$ further satisfies the global quadratic growth condition. Then, we can derive  a $\calO(1/k)$ convergence rate in expectation using diminishing step sizes.

\begin{theorem}[convergence rate in expectation under additional global quadratic growth]\label{cor:rate_qg}
Suppose that $f$ in \eqref{eq:opt problem} is convex and \Cref{assump1} holds.  Suppose further that $f$ satisfies the global quadratic growth condition with parameter $\kappa_3>0$, i.e., there exists a constant $\kappa_3>0$ such that
$\dist^2(x,\setX^*)\leq\kappa_3(f(x)-f^*),\forall x\in\setR^d$. Consider the step sizes $\alpha_k = {N\kappa_3}/({k+1})$ for all $k\geq 0$, which satisfies the conditions in \eqref{eq:step size cond}. Then, we have
\[
\mathbb{E}_{\mathcal{F}_{k}} [{\dist}^2\left({x}^{k+1},\setX^*\right)] \leq \frac{NC_1\kappa_3^2}{k+1} \quad \forall k \in \N.
\]
\end{theorem}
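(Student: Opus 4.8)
The plan is to convert the one-step inequality of \Cref{lemma:inequality_iter_cvx_dist} into a closed deterministic recursion for the quantities $a_k := \mathbb{E}_{\mathcal{F}_{k-1}}[\dist^2(x^k,\setX^*)]$ and then close it by induction. First I would check that the prescribed step sizes $\alpha_k = N\kappa_3/(k+1)$ are admissible in the sense of \eqref{eq:step size cond}: $\sum_{k}\alpha_k = N\kappa_3 \sum_{k} \tfrac{1}{k+1} = \infty$, while $\sum_{k}\alpha_k^2 = N^2\kappa_3^2 \sum_{k} \tfrac{1}{(k+1)^2} < \infty$, so $\overline{\mathfrak{a}}$ may be taken to be this finite sum. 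Hence \Cref{lemma:inequality_iter_cvx_dist} applies, with $C_1$ the constant defined there (which, through \Cref{lemma:boundness_Exk}, already absorbs the dependence on $\overline{\mathfrak{a}}$).

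Next I would feed in the global quadratic growth condition. Since $f(x^k)-f^* \ge \kappa_3^{-1}\dist^2(x^k,\setX^*)$ holds pointwise, taking expectations gives $\mathbb{E}_{\mathcal{F}_{k-1}}[f(x^k)-f^*] \ge \kappa_3^{-1} a_k$, and substituting this together with $\alpha_k = N\kappa_3/(k+1)$ into \eqref{eq:lemma_recur_cvx_Lip_dist} yields, after simplification,
\[
a_{k+1} \le \Bigl(1 - \tfrac{2}{k+1}\Bigr) a_k + \frac{N C_1 \kappa_3^2}{(k+1)^2}, \qquad k \in \N .
\]
Write $D := N C_1 \kappa_3^2$; the claim to be proved is then $a_{k+1} \le D/(k+1)$ for all $k\in\N$.

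Finally I would run the induction. For $k=0$ the coefficient $1-\tfrac{2}{k+1}$ equals $-1$, so the recursion gives $a_1 \le -a_0 + D \le D$, using only that $a_0 = \dist^2(x^0,\setX^*) \ge 0$. For the inductive step, assuming $a_k \le D/k$ with $k\ge 1$, the recursion gives $a_{k+1} \le \tfrac{k-1}{k+1}\cdot\tfrac{D}{k} + \tfrac{D}{(k+1)^2}$, and a one-line estimate shows the right-hand side is at most $D/(k+1)$ — equivalently $\tfrac{k-1}{k} + \tfrac{1}{k+1}\le 1$, i.e. $k^2-1\le k^2$ — which is exactly the desired bound. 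The argument is essentially routine once the recursion is in hand; the only points requiring care are (i) verifying the step-size admissibility so that \Cref{lemma:inequality_iter_cvx_dist} (and the boundedness result behind it) can be invoked, and (ii) the base case $k=0$, where the contraction factor is negative and one must rely on nonnegativity of $\dist^2(x^0,\setX^*)$ rather than on any contraction. I do not expect a genuine obstacle beyond these bookkeeping points.
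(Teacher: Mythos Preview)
Your proposal is correct and follows essentially the same route as the paper: both derive the identical recursion $a_{k+1}\le\bigl(1-\tfrac{2}{k+1}\bigr)a_k+\tfrac{NC_1\kappa_3^2}{(k+1)^2}$ from \Cref{lemma:inequality_iter_cvx_dist} plus quadratic growth, and both close it via the elementary inequality $k^2-1\le k^2$. The only cosmetic difference is that the paper multiplies through by $(k+1)^2$ and telescopes, whereas you run a direct induction; these are equivalent presentations of the same argument, and your explicit check of the step-size conditions and the $k=0$ base case is a welcome bit of care.
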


\begin{proof}
	The proof is relatively standard given the recursion shown in \Cref{lemma:inequality_iter_cvx_dist}; see \Cref{supp:rate_qg} for the detailed proof.
\end{proof}

The additional assumption in~\Cref{cor:rate_qg}, i.e., $f$ satisfies the global quadratic growth condition,  is not stringent for a set of $\ell_2$-regularized convex problems such as the robust regression problem with weight decay and the SVM problem. Since these problems are strongly convex (due to the $\ell_2$-regularizer),  the global quadratic growth condition is automatically satisfied. More generally, the work~\cite{necoara2019linear} discusses sufficient conditions for global quadratic growth, which is valid for a much broader class of functions than strongly convex ones.

Both \Cref{theo:rate_cvx} and \ref{cor:rate_qg} present the convergence rate in the sense of expectation, which holds for the average of infinitely many runs of the algorithm. In the following,  we will establish the \emph{almost sure} convergence results, which hold with probability 1 for each single run of RCS.

\begin{theorem}[almost sure asymptotic convergence]\label{theo:convergence_cvx}
Suppose that $f$ in \eqref{eq:opt problem} is convex and \Cref{assump1} holds. Consider the step sizes conditions \eqref{eq:step size cond}.  We have $\lim_{k\rightarrow\infty}f(x^k)=f^*$ almost surely, and $\lim_{k\rightarrow\infty}x^k = x^* \in \setX^*$ almost surely.
\end{theorem}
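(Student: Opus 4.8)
The plan is to invoke the Supermartingale Convergence Theorem (\Cref{thm:RS}) applied to the recursion from \Cref{lemma:inequality_iter_cvx}, and then combine this with the boundedness result \Cref{lemma:boundness_Exk} and the convergence lemma \Cref{lemma:lemma4in84}. Concretely, for a fixed $x^*\in\setX^*$, rewrite \Cref{lemma:inequality_iter_cvx} as
\[
\mathbb{E}_{i(k)}\left[\|x^{k+1}-x^*\|^2\right]\leq\left(1+\tfrac{4}{N}L_1^2\alpha_k^2\right)\|x^k-x^*\|^2+\tfrac{\alpha_k^2(4L_1^2\|x^*\|^2+2L_2^2)}{N}-\tfrac{2\alpha_k}{N}\bigl(f(x^k)-f^*\bigr).
\]
Here $\Lambda^k=\|x^k-x^*\|^2$, $\mu^k=\tfrac{4}{N}L_1^2\alpha_k^2$, $\nu^k=\tfrac{\alpha_k^2}{N}(4L_1^2\|x^*\|^2+2L_2^2)$, and $\eta^k=\tfrac{2\alpha_k}{N}(f(x^k)-f^*)\geq 0$. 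By the step-size condition \eqref{eq:step size cond}, $\sum_k\mu^k<\infty$ and $\sum_k\nu^k<\infty$ deterministically. Hence \Cref{thm:RS} yields that $\{\|x^k-x^*\|^2\}$ converges almost surely to a finite random variable, and $\sum_{k}\alpha_k\bigl(f(x^k)-f^*\bigr)<\infty$ almost surely.

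From $\sum_k\alpha_k(f(x^k)-f^*)<\infty$ together with $\sum_k\alpha_k=\infty$ and $f(x^k)-f^*\geq 0$, a standard liminf argument gives $\liminf_{k\to\infty}(f(x^k)-f^*)=0$ almost surely. To upgrade this to a full limit, I would apply \Cref{lemma:lemma4in84} with $\Theta=f$, $y^k=x^k$, $\mu_k=\alpha_k$, and $p=1$: condition \eqref{eq:cd2} is the first part of \eqref{eq:step size cond}; condition \eqref{eq:cd3} is exactly $\sum_k\alpha_k\|f(x^k)-f^*\|<\infty$; condition \eqref{eq:cd1} follows from \Cref{lemma:inequality_iter} (a), namely $\|x^k-x^{k+1}\|\leq\alpha_k\|r^k\|$, combined with \Cref{assump1} and the $\omega$-wise boundedness of $\{x^k\}$ (which is a consequence of the a.s. convergence of $\|x^k-x^*\|^2$), so that $\|r^k\|\leq L_1\|x^k\|+L_2\leq M$ for some finite (path-dependent) constant $M$; finally, $f$ is locally Lipschitz (being convex and finite-valued), hence Lipschitz on the bounded set $\{x^k\}$. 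This gives $\lim_{k\to\infty}f(x^k)=f^*$ almost surely. The main subtlety here is that the Lipschitz constant $L_\Theta$ and the bound $M$ in \Cref{lemma:lemma4in84} are allowed to be path-dependent random variables, which is fine since the lemma is applied pathwise on the probability-one event where $\{\|x^k-x^*\|\}$ converges.

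For the last claim, that $x^k$ converges almost surely to some point of $\setX^*$, I would proceed as follows. Work on the probability-one event where both $f(x^k)\to f^*$ and $\|x^k-x^*\|^2$ converges for every $x^*$ in a fixed countable dense subset of $\setX^*$ (a standard Opial-type setup; since $\setX^*$ is closed and the map $x^*\mapsto\lim_k\|x^k-x^*\|^2$ is continuous where defined, one can extend to all $x^*\in\setX^*$). On this event $\{x^k\}$ is bounded, so it has a convergent subsequence $x^{k_j}\to\bar x$; by continuity of $f$ and $f(x^{k_j})\to f^*$ we get $f(\bar x)=f^*$, i.e.\ $\bar x\in\setX^*$. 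Then taking $x^*=\bar x$, the sequence $\|x^k-\bar x\|^2$ converges and has a subsequence tending to $0$, so the whole sequence $\|x^k-\bar x\|\to 0$, i.e.\ $x^k\to\bar x\in\setX^*$. I expect the main obstacle to be the bookkeeping in this final step — carefully justifying that the almost-sure convergence of $\|x^k-x^*\|^2$ holds simultaneously for all $x^*\in\setX^*$ (not just one fixed $x^*$) on a single probability-one event, which is handled by the countable-dense-subset plus continuity argument — rather than any deep new estimate.
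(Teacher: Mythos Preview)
Your proposal is correct and follows essentially the same route as the paper: apply \Cref{thm:RS} to the recursion of \Cref{lemma:inequality_iter_cvx}, deduce almost-sure boundedness of $\{x^k\}$ and $\sum_k\alpha_k(f(x^k)-f^*)<\infty$, then invoke \Cref{lemma:lemma4in84} pathwise (with $\Theta=f$, $p=1$) to upgrade to $f(x^k)\to f^*$, and finish with a subsequence argument. The reference to \Cref{lemma:boundness_Exk} in your opening sentence is unnecessary---you never use it, and boundedness follows directly from the supermartingale theorem---but this is harmless.

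One point worth noting: in the final step the paper simply extracts a cluster point $\bar x\in\setX^*$ and appeals to the fact that ``$\lim_k\|x^k-x^*\|^2$ almost surely exists'' with $x^*=\bar x$, glossing over the issue that $\bar x$ is random while the supermartingale argument was run for a single \emph{fixed} $x^*$. You explicitly flag this and resolve it via the standard countable-dense-subset-of-$\setX^*$ argument, which is the rigorous way to handle it. So your treatment of this last step is in fact more careful than the paper's.
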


In order to deal with the more general~\Cref{assump1}, we conduct all the arguments in the intersection of the events  $\Omega_1:=\{w:\{x^k(w)_{k\in \N}\} \text{ is bounded}\}$ and the events $\Omega_2:=\{w: \sum_{k\in \N}\alpha_k \left(f\left(x^k(w)\right)-f^* \right) < \infty\}$.  Applying the supermartingale convergence theorem (see \Cref{thm:RS})) to the recursion in \Cref{lemma:inequality_iter_cvx} yields $\mathbb{P}(\Omega_1 \cap \Omega_2) = 1$. Eventually, the desired result in \Cref{theo:convergence_cvx} can be derived by applying our convergence lemma (i.e., \Cref{lemma:lemma4in84}) to any outcome $w \in \Omega_1 \cap \Omega_2$ since all the requirements in \Cref{lemma:lemma4in84} are satisfied for such an $w$. We provide the detailed proof in the following.

\begin{proof}
We restate the recursion in \Cref{lemma:inequality_iter_cvx} below:
\be\label{eq:dimRS}
\mathbb{E}_{i(k)}\left[\|x^{k+1}-x^*\|^2\right] \leq \left(1+\frac{4}{N}L_1^2\alpha_k^2 \right) \|x^{k}-x^*\|^2+\alpha_k^2\frac{4L_1^2\|x^*\|^2+2L_2^2}{N}
-\frac{2\alpha_k}{N}(f(x^k)-f^*).
\ee
Note that $\|x^{k}-x^*\|^2\geq0$, $\frac{2\alpha_k}{N}(f(x^{k})-f^*)\geq0$, and  $\sum_{k\in\N}\alpha_k^2<\infty$. Therefore, upon applying \Cref{thm:RS} to \eqref{eq:dimRS}, we obtain that $\lim_{k\rightarrow\infty}  \|x^{k}-x^*\|^2$ almost surely exists.  It immediately follows that the sequence $\{x^k\}_{k\in \N}$ is almost surely bounded, i.e., we have $\Omega_1 \subseteq \Omega$ such that
\be\label{eq:prob1}
\mathbb{P}\left(\left\{w\in\Omega_1: \{x^k(w)\}_{k\in \N} \text{ is bounded}\right\}\right)=1.
\ee
Applying \Cref{thm:RS} to \eqref{eq:dimRS} also provides  $\sum_{k\in \N}\alpha_k(f(x^{k})-f^*)<\infty$ almost surely. Thus, we have $\Omega_2 \subseteq \Omega$ such that
\be\label{eq:prob2}
\mathbb{P}\left(\left\{w\in\Omega_2: \sum_{k\in \N}\alpha_k \left(f\left(x^k(w)\right)-f^* \right) < \infty\right\}\right)=1.
\ee
Let us define $\Omega_3 = \Omega_1 \cap \Omega_2$.  By \eqref{eq:prob1}, statement (a) of~\Cref{lemma:inequality_iter} and~\Cref{assump1}, we have  $\|x^k(w)-x^{k+1}(w)\|\leq C_3(w)\alpha_k$ and $f$ is Lipschitz continuous over $\{x^k(w)\}_{k\in \N}$ for all $w\in\Omega_3$. Then, we can apply \Cref{lemma:lemma4in84} to obtain
\[
\lim_{k\rightarrow\infty}f\left(x^k(w)\right)=f^*,\quad\forall w\in\Omega_3.
\]
From \eqref{eq:prob1} and \eqref{eq:prob2}, it is clear that $\mathbb{P} \left( \Omega_3  \right)= 1$. Finally, we obtain that $f(x^k)\rightarrow f^*$ almost surely.

Next, we show that $x^k \to x^* \in \setX^*$ almost surely.  Let us consider an arbitrary outcome $w\in \Omega_3$. Since  $\{x^k(w)\}$  is bounded and $f\left(x^k(w)\right) \to f^*$ as shown above, we can extract a convergent subsequence $\{x^{k_i}(w)\}$ such that $x^{k_i}(w) \to x^* \in \setX^*$. This, together with the established fact that $\lim_{k\rightarrow\infty}  \|x^{k}-x^*\|^2$ almost surely exists, yields the desired result.
\end{proof}

\Cref{theo:convergence_cvx} asserts that RCS eventually finds an optimal solution of problem \eqref{eq:opt problem} for each single run almost surely.  It does not provide a rate result. Actually, we can further establish the almost sure asymptotic rate result.

\begin{corollary}[almost sure  asymptotic convergence rate]\label{cor:asymptoticrate_cvx}
Under the setting of \Cref{theo:convergence_cvx}. Consider the step sizes  $\alpha_k = \frac{\Delta}{\sqrt{k+1}\log(k+2)}$ with $\Delta>0$ and let $\tilde{x}^k=  \frac{1}{k + 1} \sum_{j=0}^{k}x^j$ for all $k\geq 0$, then we have
\[
f(\tilde x^k)-f^* \leq o\left(\frac{\log(k+2)}{\sqrt{k+1}}\right) \quad \text{as} \ k\rightarrow \infty  \quad \text{almost surely}.
\]
\end{corollary}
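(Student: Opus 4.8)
The plan is to run the whole argument pathwise on a probability-one event and to convert a summability estimate into a rate via Kronecker's lemma combined with convexity. The key observation is that the proof of \Cref{theo:convergence_cvx} already delivers, as a byproduct of applying \Cref{thm:RS} to the recursion in \Cref{lemma:inequality_iter_cvx}, the almost sure statement $\sum_{k\in\N}\alpha_k\big(f(x^k)-f^*\big)<\infty$. So I would fix an outcome in this probability-one event, set $a_k:=f(x^k)-f^*\ge 0$ (nonnegative since $f^*$ is the optimal value), and work with the deterministic facts $a_k\ge 0$ for all $k$ and $\sum_{k\in\N}\alpha_k a_k<\infty$ along this path.

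First I would apply Kronecker's lemma with the weights $c_k:=1/\alpha_k=\sqrt{k+1}\,\log(k+2)/\Delta$ and the summable terms $x_k:=\alpha_k a_k$. Since $t\mapsto\sqrt{t+1}\,\log(t+2)$ is increasing on $[0,\infty)$ and diverges to $+\infty$, the sequence $\{c_k\}$ is positive, nondecreasing, and unbounded, so Kronecker's lemma applies and gives $\tfrac{1}{c_k}\sum_{j=0}^k c_j x_j=\alpha_k\sum_{j=0}^k a_j\to 0$, i.e.
\[
\sum_{j=0}^k\big(f(x^j)-f^*\big)=o\!\left(\sqrt{k+1}\,\log(k+2)\right)\quad\text{as }k\to\infty
\]
along the chosen path. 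Then I would invoke convexity of $f$ and the definition $\tilde x^k=\tfrac{1}{k+1}\sum_{j=0}^k x^j$ to write $f(\tilde x^k)-f^*\le\tfrac{1}{k+1}\sum_{j=0}^k\big(f(x^j)-f^*\big)$, and divide the displayed estimate by $k+1$ to obtain $f(\tilde x^k)-f^*\le o\!\left(\tfrac{\log(k+2)}{\sqrt{k+1}}\right)$. Since the outcome was arbitrary in a probability-one event, the claimed almost sure rate follows.

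I do not expect a genuine obstacle here: the result is essentially an exercise in combining \Cref{thm:RS} (to get the almost sure summability, already available from the proof of \Cref{theo:convergence_cvx}), Kronecker's lemma, and convexity. The only points that need a little care are verifying that $c_k=1/\alpha_k$ is indeed nondecreasing and divergent so that Kronecker's lemma is legitimately applicable, and noting that the conclusion holds along every path in the probability-one event, hence almost surely. In particular, neither boundedness of the iterates nor any error-bound condition is needed for this corollary --- only $f(x^k)\ge f^*$, convexity of $f$, and $\sum_k\alpha_k(f(x^k)-f^*)<\infty$.
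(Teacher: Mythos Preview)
Your proposal is correct and matches the paper's own proof essentially line for line: the paper also extracts the almost sure summability $\sum_k\alpha_k(f(x^k)-f^*)<\infty$ from the proof of \Cref{theo:convergence_cvx}, applies Kronecker's lemma with the increasing sequence $\{1/\alpha_k\}$ to get $\alpha_k\sum_{j=0}^k(f(x^j)-f^*)\to 0$, and then uses convexity and the definition of $\tilde x^k$ to conclude. Your additional care in verifying monotonicity and divergence of $1/\alpha_k$ and in framing the argument pathwise is a welcome clarification but not a departure from the paper's method.
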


\begin{proof}
	Form the proof of \Cref{theo:convergence_cvx}, we have  $\sum_{k\in \N}\alpha_k(f(x^{k})-f^*)<\infty$ almost surely. Applying Kronecker's lemma with the fact that $\{1/\alpha_k\}$ being increasing, we have
	\[
	      \lim_{k\to \infty} \alpha_k \sum_{j=0}^{k} (f(x^{j})-f^*) = 0.
	\]
	  Then, the desired result follows by recognizing
	\[
	   (k+1)  \alpha_k (f(\tilde x^k) - f^*) \leq  \alpha_k \sum_{j =0}^k(f(x^{j})-f^*),
	\]
where we have used convexity of $f$ and definition of $\tilde{x}^k$.
\end{proof}


\section{Convergence Analysis for Weakly Convex Case} \label{sec:weakly_cvx}
In this section,  we turn to establish convergence results for RCS when $f$ in \eqref{eq:opt problem} is weakly convex.
The iteration complexity in expectation for RCS is established in the following theorem.
\begin{theorem}[iteration complexity in expectation]\label{theo:rate_wcvx} Suppose that $f$ in \eqref{eq:opt problem} is $\rho$-weakly convex and \Cref{assump1} holds. Let $f_{\lambda}$ be the Moreau envelope of $f$ with $\lambda<{1}/{\rho}$. Let the step size $\alpha_k = \frac{\Delta}{\sqrt{T+1}}, \forall k\geq 0$ for some constant $\Delta>0$, where $T$ is the pre-determined total number of iterations. The following assertions hold:
\begin{enumerate}[label=\textup{\textrm{(\alph*)}},topsep=0pt,itemsep=0ex,partopsep=0ex]
\item If further the parameter $L_1 = 0$ in \Cref{assump1}, then we have
\[
\min_{0\leq k\leq T} \mathbb{E}_{\mathcal{F}_{T-1}}\left[\|\nabla f_{\lambda}(x^k)\|^2\right]\leq\frac{\frac{2N(f_{\lambda}(x^0)-f^*)}{\Delta}+\frac{L_2^2\Delta}{\lambda}}{(1-\lambda\rho)\sqrt{T+1}}.
\]
\item Otherwise, if further $\partial f$ satisfies the global metric subregularity property with parameter $\kappa_1>0$ (see \Cref{prop:eb} (a) for definition), the set of critical points $\overline{ {\setX} }$ is bounded, and $\alpha_k\leq\frac{1-\lambda\rho}{8L_1^2\kappa_2^2\lambda}$, then  we have
\[
\min_{0\leq k\leq T} \mathbb{E}_{\mathcal{F}_{T-1}}\left[\|\nabla f_{\lambda}(x^k)\|^2\right]\leq\frac{\frac{4N(f_{\lambda}(x^0)-f^*)}{\Delta}+4C_2\Delta}{(1-\lambda\rho)\sqrt{T+1}},
\]
where $\kappa_2, C_2>0$ are constants defined in the proof.
\end{enumerate}
\end{theorem}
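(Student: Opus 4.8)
The plan is to adapt the Moreau-envelope potential-function argument of \cite{davis2019stochastic} to the coordinate setting, replacing the complete subgradient by the coordinate subgradient through the basic recursion in \Cref{lemma:inequality_iter}, and to neutralize the linearly bounded subgradients assumption by an error-bound argument. First I would derive an \emph{approximate descent} inequality for $f_{\lambda}$ along the RCS iterates (this is the content of the lemma \Cref{lemma:inequality_iter_wcvx} cited in the introduction). Write $\hat x^k = prox_{\lambda,f}(x^k)$. The definition of the Moreau envelope gives $f_{\lambda}(x^{k+1}) \leq f(\hat x^k) + \frac{1}{2\lambda}\|\hat x^k - x^{k+1}\|^2$. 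Taking $x = \hat x^k$ in \Cref{lemma:inequality_iter}(b) bounds $\mathbb{E}_{i(k)}[\|\hat x^k - x^{k+1}\|^2]$ by $\|\hat x^k - x^k\|^2 + \frac{\alpha_k^2(L_1\|x^k\|+L_2)^2}{N} - \frac{2\alpha_k}{N}\langle r^k, x^k - \hat x^k\rangle$, where $r^k \in \partial f(x^k)$ is the complete subgradient assembled in \eqref{eq:coordinate subgrad to subgrad}. I would then apply the $\rho$-weak convexity inequality \eqref{eq:wcvx inequality} to get $\langle r^k, x^k - \hat x^k\rangle \geq f(x^k) - f(\hat x^k) - \frac{\rho}{2}\|x^k - \hat x^k\|^2$, and use parts (b) and (d) of \Cref{prop:wcvxf_me} to rewrite $f(x^k) - f(\hat x^k)$ and $\|x^k - \hat x^k\|^2$ in terms of $\|\nabla f_{\lambda}(x^k)\|^2$. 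Combining, the cross term and the $-\frac{1}{2\lambda}\|\cdot\|^2$ term should collapse into a clean factor, yielding a recursion of the form
\[
\mathbb{E}_{i(k)}[f_{\lambda}(x^{k+1})] \leq f_{\lambda}(x^k) - \frac{(1-\lambda\rho)\alpha_k}{N}\|\nabla f_{\lambda}(x^k)\|^2 + \frac{\alpha_k^2(L_1\|x^k\|+L_2)^2}{2\lambda N}.
\]

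For part (a), setting $L_1 = 0$ turns the last term into $\frac{\alpha_k^2 L_2^2}{2\lambda N}$, a fixed multiple of $\alpha_k^2$. Taking full expectations, telescoping over $k = 0,\ldots,T$, and using $f_{\lambda}(x^{T+1}) \geq f^*$ from \Cref{cor:me}(a) gives $\frac{1-\lambda\rho}{N}\sum_{k=0}^{T}\alpha_k\,\mathbb{E}[\|\nabla f_{\lambda}(x^k)\|^2] \leq f_{\lambda}(x^0) - f^* + \frac{L_2^2}{2\lambda N}\sum_{k=0}^T\alpha_k^2$. Substituting $\alpha_k = \Delta/\sqrt{T+1}$ (so $\sum_{k=0}^T\alpha_k = \Delta\sqrt{T+1}$ and $\sum_{k=0}^T\alpha_k^2 = \Delta^2$) and bounding the left side below by $\frac{(1-\lambda\rho)\Delta\sqrt{T+1}}{N}\min_{0\le k\le T}\mathbb{E}[\|\nabla f_{\lambda}(x^k)\|^2]$ yields the stated bound after rearranging.

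The main obstacle is part (b): with $L_1 > 0$ the error term carries $\|x^k\|$, and unlike the convex case there is no diminishing step size available to control $\mathbb{E}[\|x^k\|^2]$, so we must instead convert $\|x^k\|$ into something descent-compatible using the error bound. By \Cref{cor:me_ms}, global metric subregularity of $\partial f$ transfers to global metric subregularity of $\nabla f_{\lambda}$ with a parameter $\lambda\kappa_2$; since $f$ and $f_{\lambda}$ share the critical set $\overline{\setX}$ (\Cref{cor:mecritical}), which is assumed bounded, say $\overline{\setX}\subseteq\{x:\|x\|\le R\}$, we obtain $\|x^k\|\le \dist(x^k,\overline{\setX}) + R \le \lambda\kappa_2\|\nabla f_{\lambda}(x^k)\| + R$. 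Squaring and using $(L_1\|x^k\|+L_2)^2 \le 2L_1^2\|x^k\|^2 + 2L_2^2$ splits the error term into a multiple of $\alpha_k^2\|\nabla f_{\lambda}(x^k)\|^2$ plus a multiple of $\alpha_k^2$. The step-size restriction $\alpha_k \le \frac{1-\lambda\rho}{8L_1^2\kappa_2^2\lambda}$ is precisely what lets the first of these be absorbed into (half of) the descent term $-\frac{(1-\lambda\rho)\alpha_k}{N}\|\nabla f_{\lambda}(x^k)\|^2$, leaving a recursion of the same shape as in part (a) with an absolute constant $C_2$ (a function of $L_1, L_2, R, \lambda$) replacing $\frac{L_2^2}{2\lambda}$. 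Telescoping, using $f_{\lambda}(x^{T+1})\ge f^*$ again, and inserting $\alpha_k = \Delta/\sqrt{T+1}$ as before completes the proof. The delicate points I expect to fuss over are keeping precise track of constants in the approximate descent inequality (so that the weak-convexity cross term and the $-\tfrac{1}{2\lambda}\|\cdot\|^2$ term combine correctly), and confirming that the boundedness of $\overline{\setX}$ together with \Cref{cor:me_ms} is legitimately applicable under exactly the hypotheses of part (b).
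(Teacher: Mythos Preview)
Your proposal is correct and follows essentially the same route as the paper: derive an approximate-descent recursion for $f_\lambda$ by plugging $x=\hat x^k=prox_{\lambda,f}(x^k)$ into \Cref{lemma:inequality_iter}(b), apply weak convexity and \Cref{prop:wcvxf_me}(b),(d), and for part~(b) convert $\|x^k\|$ to $\|\nabla f_\lambda(x^k)\|$ via $\dist(x^k,\overline\setX)\le \lambda\kappa_2\|\nabla f_\lambda(x^k)\|$ (from \Cref{cor:me_ms}) plus boundedness of $\overline\setX$, absorbing the resulting $\alpha_k^2\|\nabla f_\lambda(x^k)\|^2$ term using the step-size cap; then telescope with $\alpha_k=\Delta/\sqrt{T+1}$. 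One minor point: the paper's \Cref{lemma:inequality_iter_wcvx} records the descent coefficient as $\frac{(1-\lambda\rho)\alpha_k}{2N}$ rather than your $\frac{(1-\lambda\rho)\alpha_k}{N}$ (it drops the $f(x^k)-f_\lambda(x^k)$ term rather than also bounding it via \Cref{prop:wcvxf_me}(b)); your tighter coefficient is valid and yields a bound that implies the theorem's stated constants, so this causes no issue.
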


We note that, part (a) of \Cref{theo:rate_wcvx} presents the complexity result under the traditional Lipschitz continuity assumption on the objective function $f$. Part (b) of \Cref{theo:rate_wcvx} applies to the more general linearly bounded subgradients assumption (i.e., $L_1 >0$ in \Cref{assump1}) with an additional global metric subregularity assumption. For both cases, we obtain the same order of iteration complexity results.

Let us interpret this iteration complexity result in a conventional way. In order to achieve $\min_{0\leq k\leq T} \mathbb{E}_{\mathcal{F}_{T-1}}\left[\|\nabla f_{\lambda}(x^k)\|^2\right] \leq \varepsilon$, RCS needs at most $\calO(\varepsilon^{-4})$ number of iterations in expectation, which is in the same order as other subgradient-type methods for weakly convex minimization. This is reasonable since RCS with  $N=1$ recovers the subgradient method.


Technically speaking, the linearly bounded subgradients assumption (i.e., \Cref{assump1}) introduces (considerably) extra technical difficulties. The key idea in our proof is to use the global metric subregularity property of $\nabla f_\lambda$ to establish the approximate descent property of RCS (see \Cref{lemma:inequality_iter_wcvx} below).  Such an error bound condition is ensured by  \Cref{cor:me_ms} as $\partial f$ is assumed to be globally metric subregular.  Note that \Cref{cor:pr} shows that $\partial f$ of the concrete robust phase retrieval problem \eqref{eq:pr} satisfies the global metric subregularity property. Next, we provide the detailed proof of \Cref{theo:rate_wcvx}.

\begin{proof}
In order to prove \Cref{theo:rate_wcvx}, we first derive the recursion for weakly convex case as preparation.
\begin{lemma}[key recursion for weakly convex case]\label{lemma:inequality_iter_wcvx}
\begin{enumerate}[label=\textup{\textrm{(\alph*)}},topsep=0pt,itemsep=0ex,partopsep=0ex]
\item Under the setting of statement (a) of~\Cref{theo:rate_wcvx}. Then, we have
\[
	\mathbb{E}_{i(k)}\left[f_{\lambda}(x^{k+1})-f^*\right]\leq f_{\lambda}(x^k)-f^*+\alpha_k^2\frac{L_2^2}{2N\lambda}-\frac{(1-\lambda\rho)\alpha_k}{2N}\|\nabla f_{\lambda}(x^k)\|^2,
	\]
\item Under the setting of statement (b) of~\Cref{theo:rate_wcvx}. Denote $\max_{\bar{x}\in\overline{\setX}}\|\bar{x}\|^2\leq\sB_2$ with some $\sB_2>0$. Then, we have
	\[
	\mathbb{E}_{i(k)}\left[f_{\lambda}(x^{k+1})-f^*\right]\leq f_{\lambda}(x^k)-f^*+\alpha_k^2\frac{C_2}{N}-\frac{(1-\lambda\rho)\alpha_k}{4N}\|\nabla f_{\lambda}(x^k)\|^2,
	\]
	where $C_2=\frac{2L_1^2\sB_2+L_2^2}{\lambda}$ is a positive constant.
\end{enumerate}
\end{lemma}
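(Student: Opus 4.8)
The plan is to derive one master ``approximate descent'' recursion for the Moreau envelope $f_\lambda$ along the RCS iterates and then plug in the two bounds on $\|r^k\|$ afforded by \Cref{assump1}. Write $\hat x^k := prox_{\lambda,f}(x^k)$ and let $r^k \in \partial f(x^k)$ be the complete subgradient assembled from the block subgradients as in \eqref{eq:coordinate subgrad to subgrad}. First I would apply the definition of the Moreau envelope at $x^{k+1}$ with the (suboptimal) test point $\hat x^k$, giving $f_\lambda(x^{k+1}) \le f(\hat x^k) + \frac{1}{2\lambda}\|\hat x^k - x^{k+1}\|^2$. Since RCS updates only block $i(k)$, expanding $\|\hat x^k - x^{k+1}\|^2$ and taking $\mathbb{E}_{i(k)}$ as in the proof of \Cref{lemma:inequality_iter} (using $\mathbb{E}_{i(k)}\|r_{i(k)}^k\|^2 = \frac{1}{N}\|r^k\|^2$ and the analogous identity for the cross term) yields
\[
\mathbb{E}_{i(k)}[f_\lambda(x^{k+1})] \le f(\hat x^k) + \frac{1}{2\lambda}\|\hat x^k - x^k\|^2 + \frac{\alpha_k}{N\lambda}\langle r^k,\, \hat x^k - x^k\rangle + \frac{\alpha_k^2}{2N\lambda}\|r^k\|^2 .
\]

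Next I would control the inner product. Since $r^k \in \partial f(x^k)$ and $f$ is $\rho$-weakly convex, \eqref{eq:wcvx inequality} at $y = \hat x^k$ gives $\langle r^k, \hat x^k - x^k\rangle \le f(\hat x^k) - f(x^k) + \frac{\rho}{2}\|\hat x^k - x^k\|^2$, while optimality of $\hat x^k$ in the proximal subproblem (comparing with $x^k$) gives $f(\hat x^k) - f(x^k) \le -\frac{1}{2\lambda}\|\hat x^k - x^k\|^2$; combining, $\langle r^k, \hat x^k - x^k\rangle \le -\frac{1-\lambda\rho}{2\lambda}\|\hat x^k - x^k\|^2$. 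Substituting this, using $f(\hat x^k) + \frac{1}{2\lambda}\|\hat x^k - x^k\|^2 = f_\lambda(x^k)$ and $\|\hat x^k - x^k\|^2 = \lambda^2\|\nabla f_\lambda(x^k)\|^2$ from \Cref{prop:wcvxf_me}(d), and subtracting $f^*$, I obtain the master recursion
\[
\mathbb{E}_{i(k)}[f_\lambda(x^{k+1}) - f^*] \le f_\lambda(x^k) - f^* - \frac{(1-\lambda\rho)\alpha_k}{2N}\|\nabla f_\lambda(x^k)\|^2 + \frac{\alpha_k^2}{2N\lambda}\|r^k\|^2 .
\]

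For part (a) this essentially finishes: $L_1 = 0$ in \Cref{assump1} forces $\|r^k\| \le L_2$, so the last term is at most $\frac{L_2^2\alpha_k^2}{2N\lambda}$ and the claim follows. Part (b) is the delicate case, and I expect the main obstacle to be that $\|r^k\| \le L_1\|x^k\| + L_2$ now carries the a priori uncontrolled norm $\|x^k\|$. My plan is to tame it with the error bound: by \Cref{cor:me_ms}, $\nabla f_\lambda$ is globally metrically subregular with parameter $\lambda\kappa_2$, and by \Cref{cor:mecritical} the critical set of $f_\lambda$ equals $\overline{\setX}$, which is bounded by $\sqrt{\sB_2}$; hence $\|x^k\| \le \dist(x^k, \overline{\setX}) + \sqrt{\sB_2} \le \lambda\kappa_2\|\nabla f_\lambda(x^k)\| + \sqrt{\sB_2}$. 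Squaring and applying $(a+b)^2 \le 2a^2 + 2b^2$ twice gives $\|r^k\|^2 \le 4L_1^2\lambda^2\kappa_2^2\|\nabla f_\lambda(x^k)\|^2 + 4L_1^2\sB_2 + 2L_2^2$, so $\frac{\alpha_k^2}{2N\lambda}\|r^k\|^2 \le \frac{2L_1^2\lambda\kappa_2^2\alpha_k^2}{N}\|\nabla f_\lambda(x^k)\|^2 + \frac{C_2\alpha_k^2}{N}$ with $C_2 = \frac{2L_1^2\sB_2 + L_2^2}{\lambda}$.

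To close, I would invoke the step size restriction: $\alpha_k \le \frac{1-\lambda\rho}{8L_1^2\kappa_2^2\lambda}$ is exactly what guarantees $\frac{2L_1^2\lambda\kappa_2^2\alpha_k^2}{N} \le \frac{(1-\lambda\rho)\alpha_k}{4N}$, so the extra $\|\nabla f_\lambda(x^k)\|^2$ term from the noise is absorbed into half of the negative term of the master recursion, leaving coefficient $-\frac{(1-\lambda\rho)\alpha_k}{4N}$ as claimed. In short, under the linearly bounded subgradients assumption the noise term is no longer uniformly bounded, and the argument closes only because global metric subregularity trades $\|x^k\|^2$ for a multiple of the same gradient norm $\|\nabla f_\lambda(x^k)\|^2$ the descent step already produces, after which a small enough step size yields net expected decrease.
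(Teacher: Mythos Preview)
Your proposal is correct and follows essentially the same route as the paper's proof: both arrive at the master recursion
\[
\mathbb{E}_{i(k)}[f_\lambda(x^{k+1})-f^*]\le f_\lambda(x^k)-f^*-\frac{(1-\lambda\rho)\alpha_k}{2N}\|\nabla f_\lambda(x^k)\|^2+\frac{\alpha_k^2}{2N\lambda}(L_1\|x^k\|+L_2)^2,
\]
and then for (b) bound $\|x^k\|^2\le 2\dist^2(x^k,\overline{\setX})+2\sB_2$, apply the metric subregularity of $\nabla f_\lambda$ from \Cref{cor:me_ms}, and absorb the resulting $\|\nabla f_\lambda(x^k)\|^2$ term using $\alpha_k\le\frac{1-\lambda\rho}{8L_1^2\kappa_2^2\lambda}$. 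The only cosmetic difference is that the paper bounds the cross term via $f(x^k)-f(\hat x^k)-\tfrac{\rho}{2}\|\hat x^k-x^k\|^2$ (keeping and then discarding the nonnegative $f(x^k)-f_\lambda(x^k)$ term), whereas you combine weak convexity with the prox optimality $f(\hat x^k)+\tfrac{1}{2\lambda}\|\hat x^k-x^k\|^2\le f(x^k)$ to bound $\langle r^k,\hat x^k-x^k\rangle$ directly; these are the same computation.
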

\begin{proof}
By the $\rho$-weak convexity of $f$ and the statement (b) of~\Cref{lemma:inequality_iter} (set $x=\hat{x}^k=prox_{\lambda,f}(x^k)$), we have
\be\label{eq:bound0_wcvx}
\mathbb{E}_{i(k)}\left[\|\hat{x}^k-x^{k+1}\|^2\right]
\leq\|\hat{x}^k-x^{k}\|^2+\frac{\alpha_k^2(L_1\|x^k\|+L_2)^2}{N}-\frac{2\alpha_k}{N}\left[f(x^k)-f(\hat{x}^k)-\frac{\rho}{2}\|\hat{x}^k-x^k\|^2\right].
\ee
The definitions of Moreau envelope~\eqref{eq:ME} and proximal mapping~\eqref{eq:PM} imply
\be\label{eq:bound1_wcvx}
\mathbb{E}_{i(k)}\left[f_{\lambda}(x^{k+1})\right]\leq f(\hat{x}^k)+\frac{1}{2\lambda}\mathbb{E}_{i(k)}\left[\|\hat{x}^k-x^{k+1}\|^2\right].
\ee
Combining \eqref{eq:bound0_wcvx} and \eqref{eq:bound1_wcvx} yields
\[
\begin{aligned}
\mathbb{E}_{i(k)}\left[f_{\lambda}(x^{k+1})\right]
	\leq&f(\hat{x}^k)+\frac{1}{2\lambda}\|\hat{x}^k-x^{k}\|^2+\frac{\alpha_k^2(L_1\|x^k\|+L_2)^2}{2N\lambda}-\frac{\alpha_k}{\lambda N}\left[f(x^k)-f(\hat{x}^k)-\frac{\rho}{2}\|\hat{x}^k-x^k\|^2\right]\\
	=&f_{\lambda}(x^k)+\frac{\alpha_k^2(L_1\|x^k\|+L_2)^2}{2N\lambda}-\frac{\alpha_k}{\lambda N}\left[f(x^k)-f(\hat{x}^k)-\frac{\rho}{2}\|\hat{x}^k-x^k\|^2\right]\\
	=&f_{\lambda}(x^k)+\frac{\alpha_k^2(L_1\|x^k\|+L_2)^2}{2N\lambda}-\frac{\alpha_k}{\lambda N}[f(x^k)-f_{\lambda}(x^k)] -\frac{\alpha_k}{2\lambda N}(\frac{1}{\lambda}-\rho)\|\hat{x}^k-x^k\|^2.
\end{aligned}
\]
This, together with parts (b) and (d) of \Cref{prop:wcvxf_me},  gives
\be\label{eq:bound2_wcvx}
\begin{aligned}
	\mathbb{E}_{i(k)}[f_{\lambda}(x^{k+1})]\leq& f_{\lambda}(x^k)+\frac{\alpha_k^2(L_1\|x^k\|+L_2)^2}{2N\lambda}-\frac{(1-\lambda\rho)\alpha_k}{2N}\|\nabla f_{\lambda}(x^k)\|^2.
\end{aligned}	
\ee
Part (a) follows the above inequality by setting $L_1=0$.

We now turn to establish part (b). Let $\bar{x}^k\in \proj(x^k,\operatorname{cl}\overline{\setX})$, where $\operatorname{cl} \overline\setX$ denotes the closure of $\overline \setX$ as used in the proof of \Cref{prop:eb}.  It then follows from \eqref{eq:bound2_wcvx} that
\be\label{eq:bound3_wcvx}
\begin{aligned}
\mathbb{E}_{i(k)}[f_{\lambda}(x^{k+1}) - f^*]
	\leq& (f_{\lambda}(x^k)- f^*)+\frac{\alpha_k^2(L_1\|x^k\|+L_2)^2}{2N\lambda}-\frac{(1-\lambda\rho)\alpha_k}{2N}\|\nabla f_{\lambda}(x^k)\|^2\\
	\leq&(f_{\lambda}(x^k)-f^*)+\frac{\alpha_k^2(4L_1^2\|x^k-\bar{x}^k\|^2+4L_1^2\sB_2+2L_2^2)}{2N\lambda}-\frac{(1-\lambda\rho)\alpha_k}{2N}\|\nabla f_{\lambda}(x^k)\|^2\\
	\leq&(f_{\lambda}(x^k)-f^*)+\frac{4L_1^2\alpha_k^2\dist^2(x^k,\overline \setX)}{2N\lambda}+\frac{C_2}{N}\alpha_k^2-\frac{(1-\lambda\rho)\alpha_k}{2N}\|\nabla f_{\lambda}(x^k)\|^2,
\end{aligned}
\ee
where $C_2:=\frac{2L_1^2\sB_2+L_2^2}{\lambda}$. Recall  the assumption that $\partial f$ satisfies the global metric subregularity with $\kappa_1>0$.  It follows from \Cref{cor:me_ms}  that $\nabla f_{\lambda}(x)$  also satisfies the global metric subregularity with some parameter $\kappa_2\lambda>0$.  Note that $\overline{\setX}=\overline{\setX}_\lambda$ (see \Cref{cor:mecritical}). Hence, we have
\be\label{eq:use eb}
\dist(x^k,\overline \setX)\leq\kappa_2\lambda\|\nabla f_{\lambda}(x^k)\|.
\ee
Then, invoking \eqref{eq:use eb} with  $\alpha_k\leq\frac{1-\lambda\rho}{8L_1^2\kappa_2^2\lambda}$ in \eqref{eq:bound3_wcvx} provides the desired result.
\end{proof}

With this lemma, we can provide proof of~\Cref{theo:rate_wcvx}. Taking total expectation to the recursion in \Cref{lemma:inequality_iter_wcvx}, for part (a) we obtain
\[
\frac{(1-\lambda\rho)\alpha_k}{2N}\mathbb{E}_{\mathcal{F}_{k-1}}\big{[}\|\nabla f_{\lambda}(x^k)\|^2\big{]}\leq \ \mathbb{E}_{\mathcal{F}_k}\big{[} (f_{\lambda}(x^k)-f^*)-(f_{\lambda}(x^{k+1})-f^*)\big{]}+\frac{L_2^2}{2N\lambda}\alpha_k^2.
\]
For part (b), we have
\[
\frac{(1-\lambda\rho)\alpha_k}{4N}\mathbb{E}_{\mathcal{F}_{k-1}}\big{[}\|\nabla f_{\lambda}(x^k)\|^2\big{]}
\leq \ \mathbb{E}_{\mathcal{F}_k}\big{[} (f_{\lambda}(x^k)-f^*)-(f_{\lambda}(x^{k+1})-f^*)\big{]}+\frac{C_2}{N}\alpha_k^2.
\]
Summing the above two inequalities over $0,1,\ldots, T$ and invoking step sizes $\alpha_k = \frac{\Delta}{\sqrt{T+1}}$ gives the desired complexity results.
\end{proof}

Note that we also require the set of critical points $\overline{\setX}$ is bounded. This is not a strong condition for many weakly convex optimization problems. For instance,  we show that the set of critical points $\overline \setX$ of the robust phase retrieval problem \eqref{eq:pr} is bounded under a mild condition that  the data matrix $A$ has full column rank (same condition as that in \Cref{cor:pr}). This result is summarized in the following fact and its proof is deferred to \Cref{appen:pr_boundcritical}.
\begin{fact}[bounded $\overline \setX$ of robust phase retrieval]\label{fact:pr_boundcritical}
For the robust phase retrieval problem \eqref{eq:pr},  if the data matrix $A$ has full column rank, then the set of critical points $\overline \setX$  is bounded.
\end{fact}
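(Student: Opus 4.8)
The plan is to show that full column rank of $A$ forces every critical point to lie in a fixed ball. Concretely, I will prove that there is a constant $C>0$, depending only on the data $(A,b)$, such that $\|A^{\top}Ax\|\leq C$ for every $x\in\overline{\setX}$. Since $A$ has full column rank, $Q:=A^{\top}A$ is symmetric positive definite, so $\|A^{\top}Ax\|=\|Qx\|\geq\sigma_{\min}(Q)\|x\|$ with $\sigma_{\min}(Q)>0$, and hence $\|x\|\leq C/\sigma_{\min}(Q)$ for all $x\in\overline{\setX}$, which is exactly the asserted boundedness.

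To establish the bound on $\|A^{\top}Ax\|$, fix $x\in\overline{\setX}$, i.e.\ $0\in\partial f(x)$. Using the explicit form of $\partial f$ recalled in the proof of \Cref{cor:pr}, there is a selection $\xi_i$, $i=1,\dots,n$, with $\xi_i=1$ on $\setI_2$, $\xi_i=-1$ on $\setI_3$, and $\xi_i\in[-1,1]$ on $\setI_1$ (the index sets defined in \eqref{eq:pr_ms_0}), such that $\sum_{i=1}^n a_i(a_i^{\top}x)\xi_i=0$. I would then substitute the identity $\sum_{i\in\setI_2}a_ia_i^{\top}x-\sum_{i\in\setI_3}a_ia_i^{\top}x = A^{\top}Ax-\sum_{i\in\setI_1}a_ia_i^{\top}x-2\sum_{i\in\setI_3}a_ia_i^{\top}x$ (which uses $A^{\top}A=\sum_{i=1}^n a_ia_i^{\top}$) into the stationarity relation and solve for $A^{\top}Ax$, obtaining
\[
A^{\top}Ax=\sum_{i\in\setI_1}a_i(a_i^{\top}x)(1-\xi_i)+2\sum_{i\in\setI_3}a_ia_i^{\top}x .
\]
The point of this rearrangement is that the terms indexed by $\setI_2$ --- the only ones whose magnitude could grow with $\|x\|$ --- have been absorbed into $A^{\top}Ax$, while the remaining terms are controlled purely by the data: on $\setI_1$ one has $|a_i^{\top}x|=|b_i|$ and on $\setI_3$ one has $|a_i^{\top}x|<|b_i|$, so with $|1-\xi_i|\leq 2$ the triangle inequality gives
\[
\|A^{\top}Ax\|\leq 2\sum_{i\in\setI_1}\|a_i\|\,|b_i|+2\sum_{i\in\setI_3}\|a_i\|\,|b_i|\leq 2\sum_{i=1}^n\|a_i\|\,|b_i|=:C .
\]
Equivalently, this is inequality \eqref{eq:pr_ms_6} evaluated at a critical point $x$, where $\dist(0,\partial f(x))=0$. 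Combining with $\|A^{\top}Ax\|\geq\sigma_{\min}(Q)\|x\|$ then finishes the proof.

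I do not anticipate a genuine obstacle here: the only care needed is bookkeeping with the set-valued subdifferential --- fixing a valid selection $\{\xi_i\}$ realizing $0\in\partial f(x)$ and correctly tracking the sign/magnitude constraints on each index block --- together with the key observation that rewriting the stationarity condition in terms of $Qx=A^{\top}Ax$ converts an a priori unbounded expression into one bounded solely by $(A,b)$. Everything else is the triangle inequality and the smallest-eigenvalue lower bound coming from full column rank.
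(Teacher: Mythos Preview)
Your proposal is correct and follows essentially the same argument as the paper's proof: both split the indices into $\setI_1,\setI_2,\setI_3$, use the stationarity relation to rewrite $A^{\top}Ax$ in terms of sums over $\setI_1$ and $\setI_3$ only, bound those sums by $2\sum_i\|a_i\|\,|b_i|$ using $|a_i^{\top}x|\leq|b_i|$ on $\setI_1\cup\setI_3$, and then invoke the smallest-eigenvalue lower bound coming from full column rank. Your extra step of passing to the partition-independent constant $C=2\sum_{i=1}^n\|a_i\|\,|b_i|$ is a small but welcome tidying, since the paper's displayed bound $\sB_2$ still carries the $x$-dependent index sets.
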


In the following theorem,  we give the almost sure  convergence result that holds true for each single run of the algorithm.

\begin{theorem}[almost sure asymptotic convergence]\label{theo:convergence_wcvx}
Suppose that $f$ in \eqref{eq:opt problem} is $\rho$-weakly convex and \Cref{assump1} holds. Let $f_{\lambda}$ be the Moreau envelope of $f$ with $\lambda<{1}/{\rho}$. Consider the step sizes $\{\alpha_k\}_{k\in \N}$ satisfying \eqref{eq:step size cond}. If one of the following conditions hold:
\begin{enumerate}[label=\textup{\textrm{(\alph*)}},topsep=0pt,itemsep=0ex,partopsep=0ex]
\item The parameter $L_1 = 0$ in \Cref{assump1};
\item$\partial f$ satisfies the global metric subregularity property with parameter $\kappa_1>0$ (see \Cref{prop:eb} (a) for definition) and the set of critical points $\overline{\setX}$ is bounded,
\end{enumerate}
\noindent then $\lim_{k\rightarrow\infty}\|\nabla f_{\lambda}(x^k)\|=0$ almost surely and hence, every accumulation point of $\{x^k\}_{k\in \N}$ is a critical point of problem \eqref{eq:opt problem} almost surely.
\end{theorem}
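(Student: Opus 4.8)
The plan is to mimic the structure of the convex-case proof of \Cref{theo:convergence_cvx} but with the Moreau envelope $f_\lambda$ playing the role of the suboptimality gap and $\nabla f_\lambda$ playing the role of the mapping $\Theta$ in \Cref{lemma:lemma4in84}. The two cases (a) and (b) differ only in which recursion from \Cref{lemma:inequality_iter_wcvx} we invoke, so I would handle both in parallel. First, in case (a) we already have the recursion
\[
\mathbb{E}_{i(k)}\left[f_{\lambda}(x^{k+1})-f^*\right]\leq f_{\lambda}(x^k)-f^*+\alpha_k^2\frac{L_2^2}{2N\lambda}-\frac{(1-\lambda\rho)\alpha_k}{2N}\|\nabla f_{\lambda}(x^k)\|^2,
\]
and in case (b) the analogous one with $\frac{C_2}{N}\alpha_k^2$ and $\frac{(1-\lambda\rho)\alpha_k}{4N}\|\nabla f_\lambda(x^k)\|^2$. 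In both, $f_\lambda(x^k)-f^*\geq 0$ by \Cref{cor:me} (a), the ``noise'' term is summable since $\sum_k\alpha_k^2<\infty$ by \eqref{eq:step size cond}, and the last term is nonnegative. Hence \Cref{thm:RS} applies: $\{f_\lambda(x^k)-f^*\}$ converges almost surely to a finite random variable, and $\sum_{k}\alpha_k\|\nabla f_\lambda(x^k)\|^2<\infty$ almost surely.

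Next I would pass to individual outcomes. Fix an outcome $w$ in the probability-one event on which both conclusions of \Cref{thm:RS} hold; I want to apply \Cref{lemma:lemma4in84} with $y^k = x^k(w)$, $\Theta = \nabla f_\lambda$, $\mu_k = \alpha_k$, $\bar\Theta = 0$, and $p = 2$. Condition \eqref{eq:cd2} is $\sum_k\alpha_k=\infty$ from \eqref{eq:step size cond}; condition \eqref{eq:cd3} is exactly $\sum_k\alpha_k\|\nabla f_\lambda(x^k(w))\|^2<\infty$, which we just established. Condition \eqref{eq:cd1}, $\|x^k-x^{k+1}\|\leq M\alpha_k$, follows from \Cref{lemma:inequality_iter} (a), $\|x^k-x^{k+1}\|\leq\alpha_k\|r^k\|$, together with \Cref{assump1} — but this needs $\|x^k(w)\|$ to be bounded, and similarly $\nabla f_\lambda$ must be Lipschitz over $\{x^k(w)\}$, which by \Cref{prop:wcvxf_me} (d) holds globally with constant $1/\lambda$ regardless. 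So the crux is boundedness of the iterates.

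For boundedness, case (a) is immediate: $L_1 = 0$ means $\|r^k\|\leq L_2$, so $\|x^k-x^{k+1}\|\leq L_2\alpha_k$ unconditionally, and moreover $\{f_\lambda(x^k)\}$ being almost surely convergent together with the fact that $f_\lambda$ has bounded sublevel sets (it is $1$-coercive when $f$ is, or one argues directly) gives boundedness — actually even simpler, one can bound $\|x^k\|$ via the convergent sequence $\|x^k - \hat x^k\| = \lambda\|\nabla f_\lambda(x^k)\|\to 0$ along a subsequence and $\hat x^k = \mathrm{prox}_{\lambda,f}(x^k)$. In case (b) the global metric subregularity of $\nabla f_\lambda$ (from \Cref{cor:me_ms}) gives $\dist(x^k,\overline\setX)\leq\kappa_2\lambda\|\nabla f_\lambda(x^k)\|$; since $\|\nabla f_\lambda(x^k)\|\to 0$ along some subsequence (forced by $\sum\alpha_k\|\nabla f_\lambda(x^k)\|^2<\infty$ and $\sum\alpha_k=\infty$) and $\overline\setX$ is bounded by hypothesis, the iterates are bounded along that subsequence; to upgrade this to boundedness of the whole sequence I would use that $\{f_\lambda(x^k)-f^*\}$ converges and $f_\lambda$ is coercive on the relevant region, or iterate the step-size bound $\|x^{k+1}\|\leq\|x^k\| + \alpha_k(L_1\|x^k\|+L_2)$ which, combined with $\sum\alpha_k^2<\infty$ and the subsequential boundedness, keeps $\|x^k\|$ bounded between consecutive visits to the good subsequence — this Gronwall-type bookkeeping is where I expect the real work to be. Once boundedness is in hand on a probability-one event, \Cref{lemma:lemma4in84} yields $\lim_{k\to\infty}\|\nabla f_\lambda(x^k)\|=0$ almost surely. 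Finally, for any accumulation point $x^\infty$ of $\{x^k(w)\}$ along a subsequence $x^{k_i}(w)\to x^\infty$, continuity of $\nabla f_\lambda$ (\Cref{prop:wcvxf_me} (d)) gives $\nabla f_\lambda(x^\infty)=0$, so $x^\infty\in\overline\setX_\lambda = \overline\setX$ by \Cref{cor:mecritical}, i.e., $x^\infty$ is a critical point of \eqref{eq:opt problem}. The main obstacle, as noted, is establishing almost-sure boundedness of $\{x^k\}$ in case (b) under the merely linearly bounded subgradient assumption; everything else is an application of the supermartingale convergence theorem and the convergence lemma.
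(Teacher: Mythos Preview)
Your overall architecture matches the paper's proof: apply the supermartingale convergence theorem to the recursions in \Cref{lemma:inequality_iter_wcvx}, extract almost-sure boundedness of the iterates, then invoke \Cref{lemma:lemma4in84} with $\Theta=\nabla f_\lambda$ and $p=2$. Cases (a) and (b) are handled just as you describe, and the conclusion about accumulation points via \Cref{cor:mecritical} is the same.

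Where you diverge from the paper is in the boundedness argument for case~(b), which you flag as ``the main obstacle'' and propose to handle by Gronwall-type bookkeeping between subsequential visits. This is unnecessary: the paper gets full-sequence boundedness of $\{x^k\}$ in one line. From \Cref{thm:RS} you already know $\{f_\lambda(x^k)-f^*\}$ converges almost surely and hence is bounded. But
\[
f_\lambda(x^k)-f^* \;=\; f(\hat x^k)-f^* + \tfrac{1}{2\lambda}\|x^k-\hat x^k\|^2 \;\geq\; \tfrac{\lambda}{2}\|\nabla f_\lambda(x^k)\|^2,
\]
since $f(\hat x^k)\geq f^*$ and $\|x^k-\hat x^k\|=\lambda\|\nabla f_\lambda(x^k)\|$ by \Cref{prop:wcvxf_me}(d). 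Thus $\{\|\nabla f_\lambda(x^k)\|\}$ is bounded along the \emph{entire} sequence, not just a subsequence. Then global metric subregularity of $\nabla f_\lambda$ (via \Cref{cor:me_ms}) gives $\dist(x^k,\overline\setX)\leq \kappa_2\lambda\|\nabla f_\lambda(x^k)\|$ bounded, and since $\overline\setX$ is bounded by hypothesis, $\{x^k\}$ is bounded almost surely. No coercivity of $f_\lambda$ and no step-size iteration are needed.

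One minor point you omitted: the recursion in \Cref{lemma:inequality_iter_wcvx}(b) requires $\alpha_k\leq\frac{1-\lambda\rho}{8L_1^2\kappa_2^2\lambda}$, which is only guaranteed for $k$ large enough since $\alpha_k\to 0$ under \eqref{eq:step size cond}. The paper notes this explicitly; it is harmless for the asymptotic argument but should be mentioned.
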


\begin{proof}
	We repeat the recursion in \Cref{lemma:inequality_iter_wcvx} below for convenience (Note that $\alpha_k\to 0$ due to \eqref{eq:step size cond}. The requirement on $\alpha_k$ in \Cref{lemma:inequality_iter_wcvx} must be satisfied for all large enough $k$):
\be\label{eq:dimRS_wcvx_2_1}
	\mathbb{E}_{i(k)}[f_{\lambda}(x^{k+1})-f^*]\leq f_{\lambda}(x^k)-f^*+\frac{L_2^2\alpha_k^2}{2N}-\frac{(1-\lambda\rho)\alpha_k}{2N}\|\nabla f_{\lambda}(x^k)\|^2.
\ee
for part (a) and
\be\label{eq:dimRS_wcvx_2}
\mathbb{E}_{i(k)}[f_{\lambda}(x^{k+1})-f^*]\leq f_{\lambda}(x^k)-f^*+\alpha_k^2\frac{C_2}{N}-\frac{(1-\lambda\rho)\alpha_k}{4N}\|\nabla f_{\lambda}(x^k)\|^2,
\ee
for part (b). Noted that $f_\lambda(x)-f^*\geq 0$, $\|\nabla f_{\lambda}(x^k)\|^2\geq0$, and $\sum_{k=0}^{\infty}\alpha_k^2<\infty$. Applying \Cref{thm:RS} to~\eqref{eq:dimRS_wcvx_2} (or~\eqref{eq:dimRS_wcvx_2_1}) yields $\lim_{k\rightarrow\infty}\left(f_\lambda(x^k)-f^*\right)$ almost surely exists and is finite. By the definition of $f_\lambda$, we have $\{f(prox(x^k)) + \frac{1}{2\lambda} \|x^k - prox(x^k)\|^2 -f^*\}_{k\in \N}$ is almost surely bounded, which further implies that $\{\|\nabla f_\lambda (x^k)\|\}_{k\in \N}$ is almost surely bounded due to $f(prox(x^k)) \geq f^*$ and $\|x^k - prox(x^k)\| = \lambda \|\nabla f_\lambda (x^k)\|$.

For part (a), $L_1=0$ in \Cref{assump1} yields the boundedness of $r^k$.

For part (b), note that $\partial f$ satisfies the global metric subregularity. We have $\dist(x^k,\overline \setX) \leq\kappa_2\lambda\|\nabla f_{\lambda}(x^k)\|$ for some $\kappa_2>0$ as derived in \eqref{eq:use eb}. Thus, $\{x^k\}_{k\in \N}$ is almost surely bounded since $\overline \setX$ is bounded by our assumption.  Next, combining~\Cref{assump1} and the almost surely boundedness of $\{x^k\}$ gives the boundedness of $r^k$.

By statement (a) of~\Cref{lemma:inequality_iter}, for both cases we have the events $\Omega_4$ such that
\[
\mathbb{P}\left(\left\{w\in\Omega_4:\begin{aligned}\|x^k(w)-x^{k+1}(w)\|\leq C_4(w)\alpha_k, C_4(w)>0\end{aligned}\right\}\right)=1.
\]

Applying \Cref{thm:RS} to~\eqref{eq:dimRS_wcvx_2_1} and~\eqref{eq:dimRS_wcvx_2} also provides $\Omega_5$ such that
\be\label{eq:as_rate_contradiction}
\mathbb{P}\left(\left\{w\in\Omega_5: \sum_{k=0}^{\infty}\alpha_k\|\nabla f_{\lambda}(x^k(w))\|^2<\infty\right\}\right)=1.
\ee
Noted that $\sum_{k=0}^{\infty}\alpha_k=\infty$ and $\nabla f_{\lambda}$ is Lipschitz continuous. By~\Cref{lemma:lemma4in84}, we have
\[
\lim_{k\rightarrow\infty}\|\nabla f_{\lambda}(x^k(w))\|=0, \quad \forall w\in \Omega_4 \cap \Omega_5.
\]
It is easy to see that $\mathbb{P}\left(\Omega_4\cap\Omega_5\right)=1$. Hence, we have
\[
\lim_{k\rightarrow\infty}\|\nabla f_{\lambda}(x^k)\|=0\quad\mbox{almost surely.}
\]
Invoking \Cref{cor:mecritical}, we conclude that every accumulation point of $\{x^k\}_{k\in \N}$ is a critical point of~\eqref{eq:opt problem} almost surely.
\end{proof}

Finally, we can derive the almost sure asymptotic convergence rate in the sense of liminf in the following corollary.
\begin{corollary}[almost sure convergence rate]\label{cor:asymptoticrate_wcvx}
Under the setting of either part (a) or part (b) of \Cref{theo:convergence_wcvx}. Let $\alpha_k=\frac{\Delta}{\sqrt{k+1}\log(k+2)}$ with some $\Delta>0$. We have $\liminf_{k\rightarrow\infty}(k+1)^{\frac{1}{4}}\|\nabla f_{\lambda}(x^k)\|=0$ almost surely.
\end{corollary}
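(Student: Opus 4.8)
The plan is to argue by contradiction and exploit the almost-sure summability $\sum_{k=0}^{\infty}\alpha_k\|\nabla f_{\lambda}(x^k)\|^2<\infty$ that was already established in \eqref{eq:as_rate_contradiction} within the proof of \Cref{theo:convergence_wcvx} (valid under either assumption (a) or (b)). First I would note that the step sizes $\alpha_k=\frac{\Delta}{\sqrt{k+1}\log(k+2)}$ satisfy the conditions \eqref{eq:step size cond}, so that \Cref{theo:convergence_wcvx} applies and, in particular, the probability-one event $\Omega_5$ on which $\sum_{k\geq 0}\alpha_k\|\nabla f_{\lambda}(x^k)\|^2<\infty$ is available.

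Next I would fix an arbitrary outcome $w\in\Omega_5$ and suppose, toward a contradiction, that $\liminf_{k\to\infty}(k+1)^{1/4}\|\nabla f_{\lambda}(x^k(w))\|=c>0$. Then there is an index $K$ with $(k+1)^{1/4}\|\nabla f_{\lambda}(x^k(w))\|\geq c/2$, hence $\|\nabla f_{\lambda}(x^k(w))\|^2\geq \frac{c^2}{4\sqrt{k+1}}$, for all $k\geq K$. Multiplying by $\alpha_k$ and summing would give
\[
\sum_{k\geq K}\alpha_k\|\nabla f_{\lambda}(x^k(w))\|^2\geq \frac{\Delta c^2}{4}\sum_{k\geq K}\frac{1}{(k+1)\log(k+2)}=\infty,
\]
where the divergence of the last series follows from the integral test (the antiderivative of $1/(t\log t)$ being $\log\log t$). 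This contradicts $w\in\Omega_5$. Hence $\liminf_{k\to\infty}(k+1)^{1/4}\|\nabla f_{\lambda}(x^k(w))\|=0$ for every $w\in\Omega_5$, and since $\mathbb{P}(\Omega_5)=1$ the corollary follows.

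I do not anticipate a genuine obstacle here: the statement is a short consequence of the already-proven almost-sure summability combined with the divergence of the Bertrand-type series $\sum 1/((k+1)\log(k+2))$. The only points deserving care are that the logarithmic factor in $\alpha_k$ is precisely what keeps this series divergent --- a polynomially faster-decaying step size would break the argument --- and that one should directly invoke $\Omega_5$ from the proof of \Cref{theo:convergence_wcvx} for the relevant case rather than rebuild it. This is the weakly convex analogue of the classical $\liminf$-type rate argument of Shor mentioned in \Cref{sec:related works}.
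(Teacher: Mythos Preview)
Your proposal is correct and follows essentially the same approach as the paper: both argue by contradiction from the almost-sure summability $\sum_k\alpha_k\|\nabla f_\lambda(x^k)\|^2<\infty$ established in \eqref{eq:as_rate_contradiction}, combined with the divergence of the Bertrand series $\sum_k 1/((k+1)\log(k+2))$. Your pointwise argument on $\Omega_5$ is in fact slightly cleaner than the paper's, which negates the almost-sure statement at the probability level and then has to extract a uniform threshold index.
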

\begin{proof}
  The derivation is based on \Cref{theo:convergence_wcvx}; see \Cref{supp:asymptoticrate_wcvx}.
\end{proof}
\section{Numerical Simulations} \label{sec:num}
In this section, we test RCS on Applications 1-3 listed in \Cref{sec:introduction}. From these experiments, our  observations can be summarized as follows: RCS uses much less workspace memory than the subgradient method (SubGrad) per iteration. Moreover, RCS is observed to converge faster than SubGrad in the first few epochs. Here, one epoch means passing through all the $d$ variables in $x$. For instance, one iteration of SubGrad means one epoch, while one epoch of RCS consists of $N$ iteration if we set $N = d$ in \Cref{alg:RCS}. These observations justify the main spirit of coordinate-type methods: RCS could be very efficient when the dimension of the problem is too high to use SubGrad (out of memory for even one iteration) and when the solution accuracy just needs to be modest (then just run RCS for a few epochs).  Note that these two situations are common in signal processing and machine learning areas.

The experiments on the robust M-estimators and linear SVM problems are conducted by using MATLAB R2020a on a personal computer with Intel Core i5-6200U CPU (2.4GHz) and 8 GB RAM. The experiments on the robust phase retrieval problem are conducted by using Python on a computer cluster with $2\times$ Intel Xeon Cascade Lake 6248 (2.5GHz, 20 cores) CPU and $12\times$ Samsung 16GB DDR4 ECC REG RAM.

\subsection{Robust M-estimators problem}\label{sec:M-esti}
We compare RCS with SubGrad on the robust M-estimators problem \eqref{eq:M-esti} with $\ell_1$-loss and $\ell_1$-penalty, i.e., Application 1 with $\ell(\cdot)=\frac{1}{n}\|\cdot\|_1$ and $\phi_p(\cdot)=p|\cdot|$:
\[
\min\limits_{x\in\R^d} \ f(x) := \frac{1}{n}\|Ax-b\|_1+p\|x\|_1,
\]
where $A\in\R^{n\times d}$ is a matrix,  $b\in\R^n$ is a vector, and $p>0$ is the regularization parameter.

In order to implement RCS, we introduce the block-wise partition of the columns of the data matrix $A=(A_{1},A_{2},\cdots,A_{N})$, where $A_{i} \in \R^{n\times d_{i}}$ is the $i$-th block.
The key factor is to calculate the coordinate subgradient $r^k_{i(k)}$ used in RCS for solving this problem, which is give by:
\[
\left\{
\begin{aligned}
	&r_{i(k)}^k =  \frac{1}{n}A_{i(k)}^{\top}\sign(s^k)+p\sign(x_{i(k)}^k),\\
	&s^{k+1}=s^k+A_{i(k)}(x_{i(k)}^{k+1}-x_{i(k)}^{k}),
\end{aligned}
\right.
\]
where $s^0=Ax^0-b$. $s^k$ is an iteratively updated intermediate quantity that is used to compute $r^k_{i(k)}$.

We generate synthetic data for simulation. The elements of $A\in\R^{n\times d}$ are generated in an i.i.d. manner according to Gaussian $\mathcal{N}(0,1)$ distribution. To construct a sparse true solution $x^*\in\R^d$, given the dimension $d$ and sparsity $s$, we select $s$ entries of $x^*$ uniformly at random  and fill the selected entries with values following i.i.d. $\mathcal{N}(0,1)$ distribution, and set the rest to zero.  To generate
the outliers vector $\delta\in\R^n$, we first randomly select $(p_{\mbox{fail}}\cdot n)$ locations. Then, we fill each of the selected locations with an i.i.d.  Gaussian $\mathcal{N}(0,1000)$ entry, while the remaining locations are set to $0$. Here, $p_{\mbox{fail}}$ is the ratio of outliers. The measurement vector $b$ is obtained by $b=Ax^*+\delta$.

In~\Cref{fig:00}, we show the evolution of $f(x^k)-f^*$ and $\dist(x^k-x^*)$ versus epoch counts with different number of blocks $N$.  We can observe that RCS with larger $N$ converges faster than RCS with smaller $N$ and SubGrad.   In addition, larger $N$ requires less workspace memory than small $N$ and SubGrad per iteration. Moreover, we find that RCS can effectively recover the sparse coefficients.
In \Cref{num:MCPMestimator}, we provide more experiments on the robust M-estimators problem with MCP-loss and $\ell_1$-penalty.

\begin{figure}[t]
\begin{minipage}[t]{0.24\linewidth}
\centering
\includegraphics[width=1.7in]{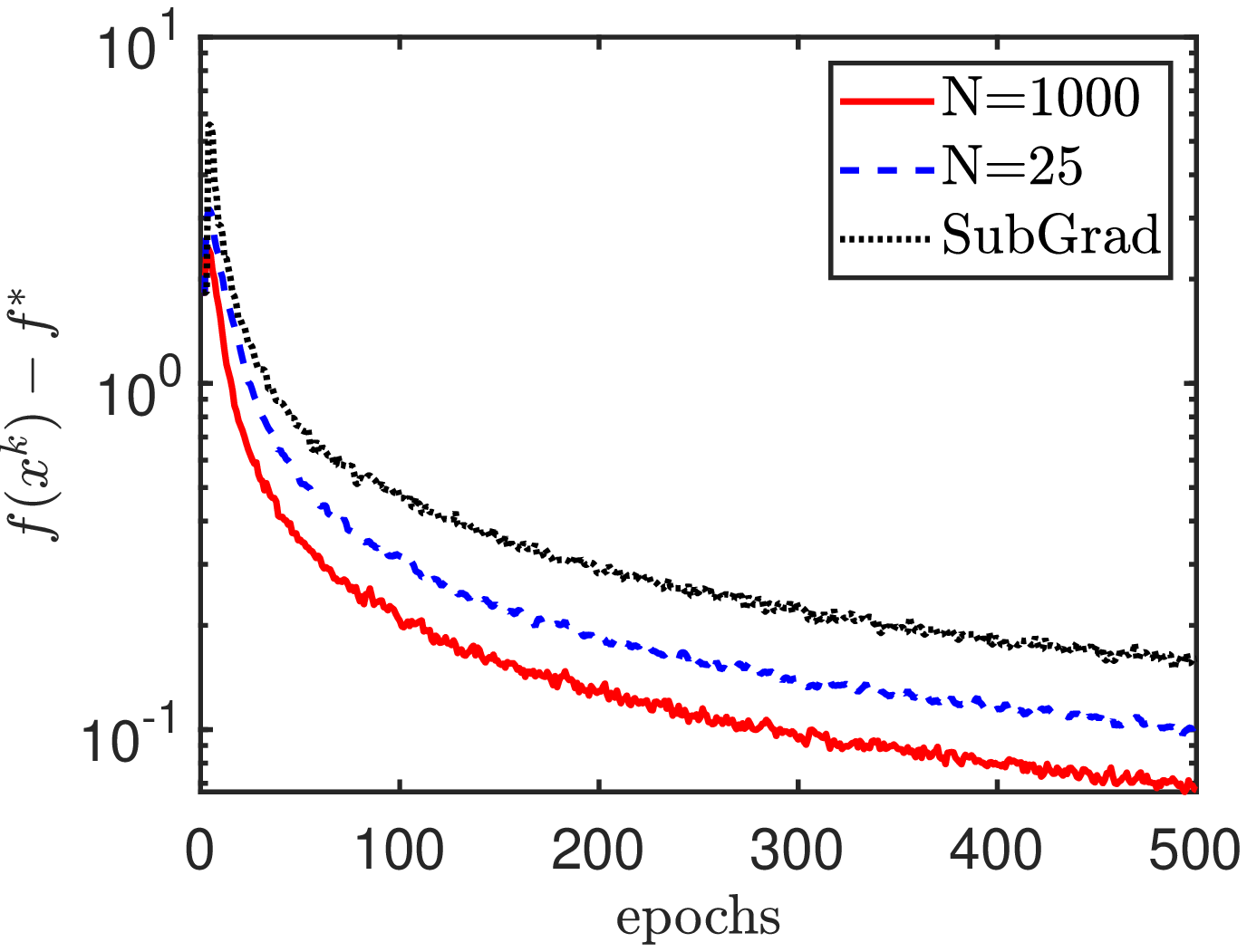}
\end{minipage}
\begin{minipage}[t]{0.24\linewidth}
\centering
\includegraphics[width=1.7in]{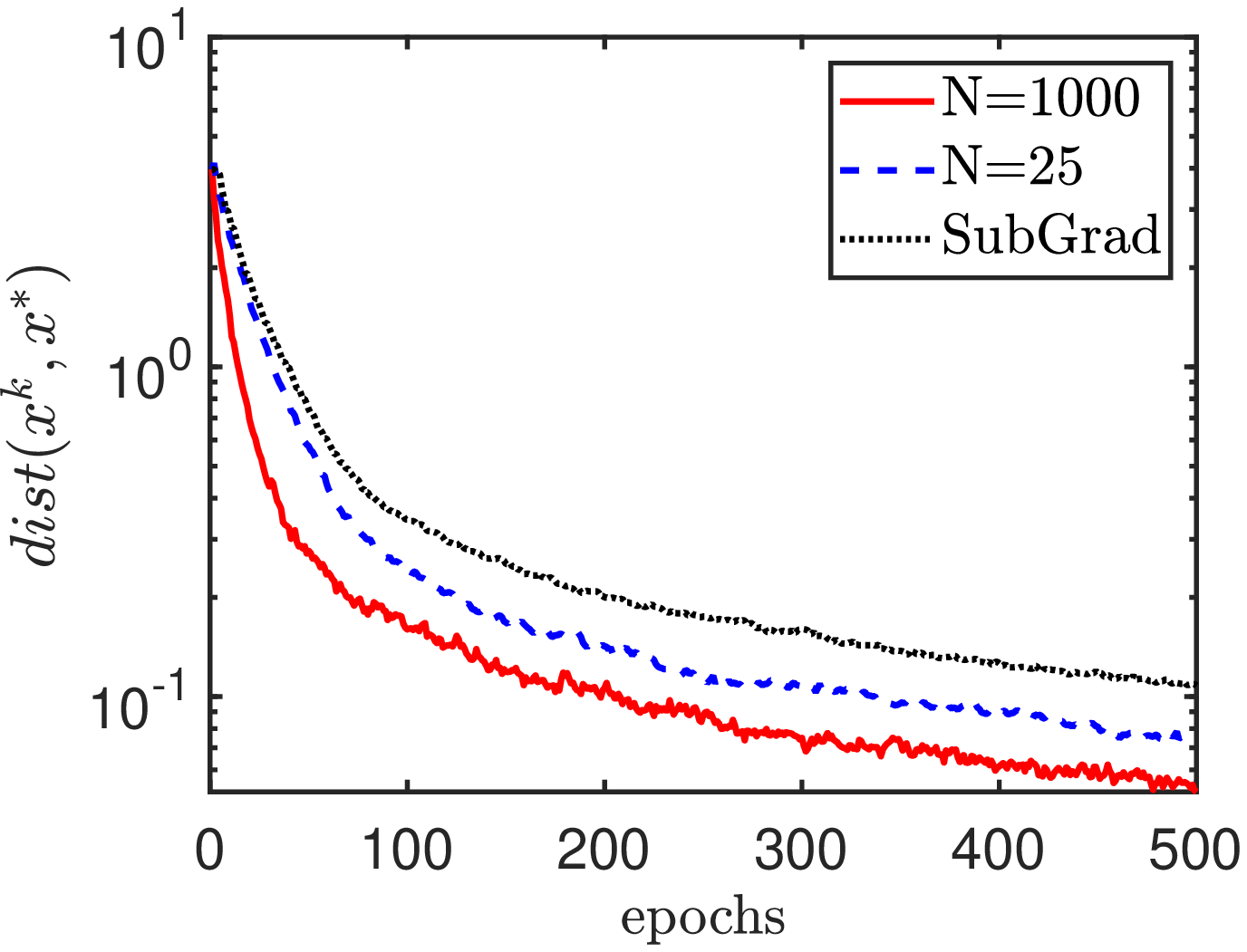}
\end{minipage}
\begin{minipage}[t]{0.24\linewidth}
\centering
\includegraphics[width=1.7in]{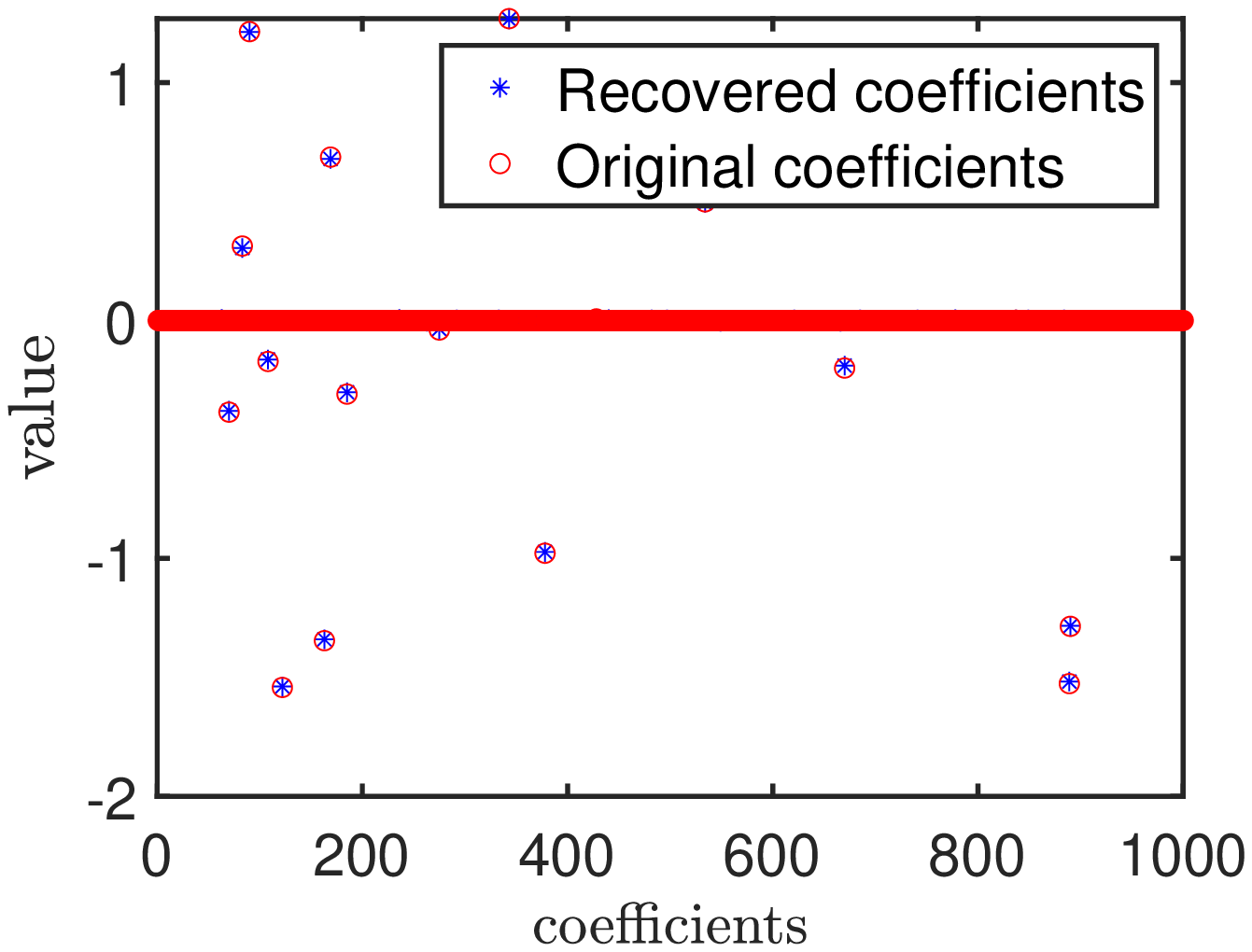}
\end{minipage}
\begin{minipage}[t]{0.24\linewidth}
\centering
\includegraphics[width=1.7in]{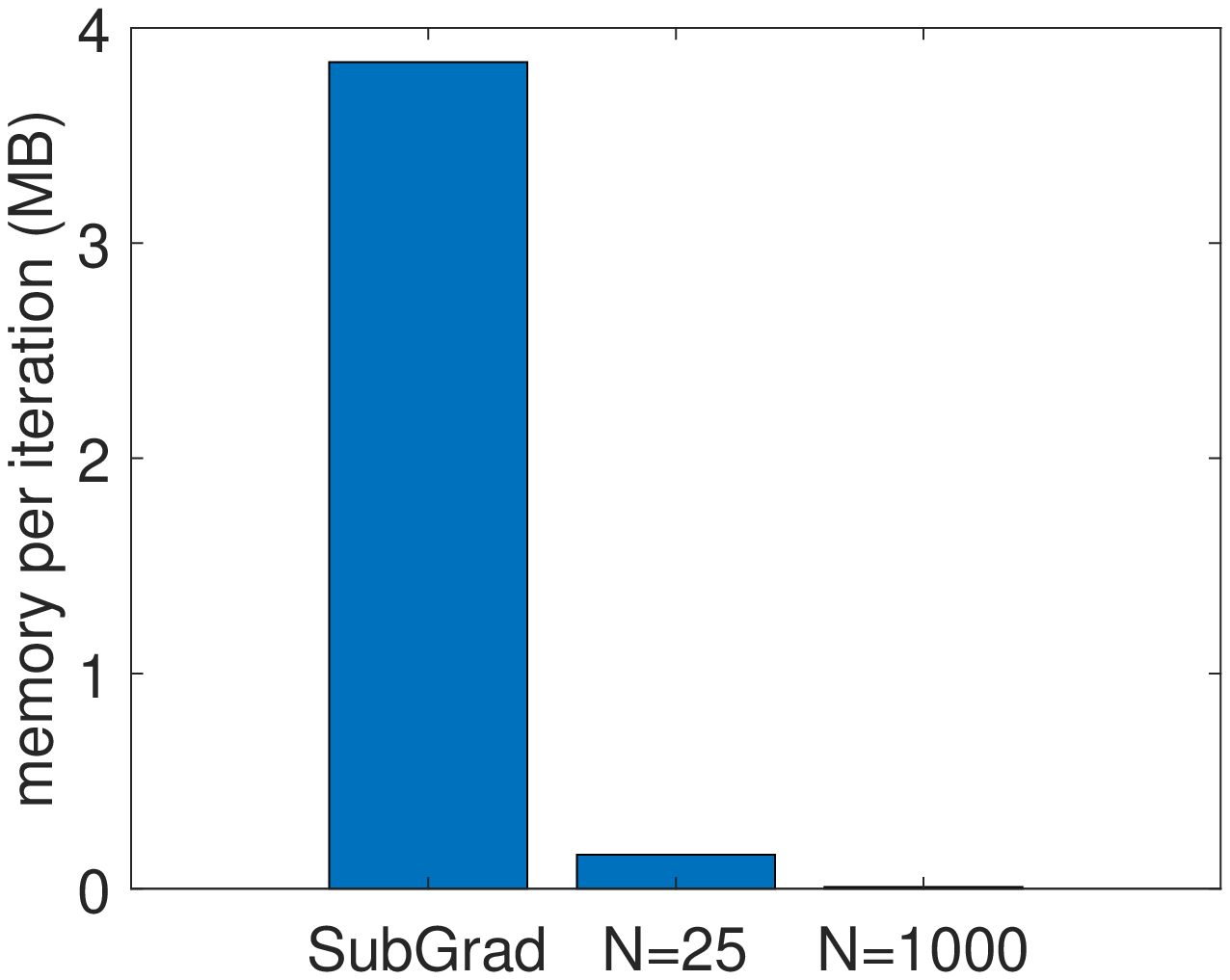}
\end{minipage}
\caption{\small Experiments on the robust M-estimators problem with $\ell_1$-loss and $\ell_1$-penalty.  {\tt Top-left:} $f(x^k)-f^*$ versus epoch counts for different choices of $N$ (i.e., the number of blocks in RCS); {\tt Top-right:} $\dist(x^k,x^*)$ versus epoch counts for different choices of $N$; {\tt Bottom-left:} Coefficients recovery by RCS with $N = d$; {\tt Bottom-right:} Comparison on workspace memory consumption per iteration. Here, $n=500$, $d = 1000$, $s = 20$, $p_{\mbox{fail}} = 0.2$.}\label{fig:00}
\end{figure}

\begin{table}[t]
	\caption{RCS versus SubGrad on linear SVM.}
	\label{table}
	\begin{center}
			\setlength{\tabcolsep}{1.4mm}{
					\begin{tabular}{lccccccccc}
						\toprule
						Data set                                                                &RCS              &                   &SubGrad          &                 \\
						($n/d$)                                                              & $f(x^k)$        &memory (MB)        &$f(x^k)$         &memory (MB)
\\
						\midrule
						\begin{tabular}{l}{\rm colon-cancer}\\($62/2000$)\end{tabular}          & 0.0379          & 0.0024            & 0.0399          & 1.9698          \\
						\begin{tabular}{l}{\rm duke}\\($44/7129$)\end{tabular}                  & 0.0078          & 0.0017            & 0.0719          & 5.0593          \\
						\begin{tabular}{l}{\rm leu}\\($38/7129$)\end{tabular}                   & 0.0062          & 0.0015            & 0.1325          & 4.4065          \\
						\begin{tabular}{l}{\rm gisette-scale.t}\\($1000/5000$)\end{tabular}     & 0.1270          & 0.0382            & 0.3166          & 76.5076         \\
						\begin{tabular}{l}{\rm a1a}\\($1605/119$)\end{tabular}                  & 0.5762          & 0.0613            & 0.5762          & 2.9557          \\
						\begin{tabular}{l}{\rm a2a}\\($2265/119$)\end{tabular}                  & 0.5896          & 0.0865            & 0.5896          & 4.1692          \\
						\begin{tabular}{l}{\rm a3a}\\($3185/122$)\end{tabular}                  & 0.5718          & 0.1216            & 0.5718          & 6.0067          \\
						\begin{tabular}{l}{\rm a4a}\\($4781/122$)\end{tabular}                  & 0.5821          & 0.1824            & 0.5821          & 9.0143          \\
						\bottomrule
					\end{tabular}
}
	\end{center}
\end{table}

\subsection{Linear SVM problem}
We compare RCS (set $N=d$)  with SubGrad on the linear SVM problem \eqref{eq:svm} (i.e., Application 2). We repeat the problem in the following for convenience:
\[
\min_{x\in\R^d}\ f(x) := \frac{1}{n}\sum_{i=1}^n\max\left\{0,1-b_i(a_i^{\top}x)\right\}+\frac{p}{2}\|x\|^2.
\]
where $a_i\in\R^{d}$ is a vector, $b_i\in\R$ is a scalar,  $p>0$ is a positive number.

Let $A=\left(a_1,...,a_n\right)^{\top}\in\R^{n\times d}$ be the data matrix and $b=\left(b_1,...,b_n\right)^{\top}\in\R^n$ be a vector.  We define $\R^{n\times d}\ni\tilde{A}=(b\mathbf{1}_d^{\top})\circ A$, where $\mathbf{1}_d$ is a $d$-dimensional vector of all $1$s and $\circ$ is the Hadamard product.
To implement RCS, we introduce the block-wise partition of the columns of the matrix $\tilde{A}=\left(\tilde{A}_{1},\tilde{A}_{2},\cdots,\tilde{A}_{N}\right)$, where $\tilde A_{i} \in \R^{n\times d_{i}}$ is the $i$-th block.
Then,  it is key to compute the coordinate subgradient $r^k_{i(k)}$, which can be calculated as:
\[
\left\{
\begin{aligned}
	&r_{i(k)}^k = -\frac{1}{n}\tilde{A}_{i(k)}^{\top}\max\{0,\sign(s^k)\}+px_{i(k)}^k,\\
	&s^{k+1}=s^k+\tilde{A}_{i(k)}(x_{i(k)}^{k}-x_{i(k)}^{k+1}),
\end{aligned}
\right.
\]
where $s^0=\mathbf{1}_n-\tilde{A}x^0$.

We use the real datasets from LIBSVM~\cite{chang2011libsvm} to test the algorithms. The termination criterion for all algorithms is $200$ epochs. We display the results in~\Cref{table}, from which we can observe that RCS outperforms SubGrad in terms of both the returned objective value and workspace memory consumption (per iteration).


\subsection{Robust phase retrieval problem}
\begin{figure}[t]
	\begin{minipage}[t]{0.24\linewidth}
		\centering
		\includegraphics[width=1.4in]{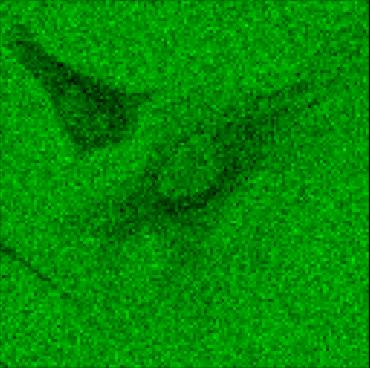}\\
		{ \scriptsize initial image (initial point)}
	\end{minipage}
	\begin{minipage}[t]{0.24\linewidth}
		\centering
		\includegraphics[width=1.4in]{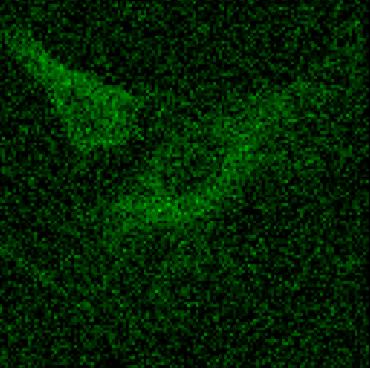}\\
		{\scriptsize SubGrad after $10$ epochs}
	\end{minipage}
	\begin{minipage}[t]{0.24\linewidth}
		\centering
		\includegraphics[width=1.4in]{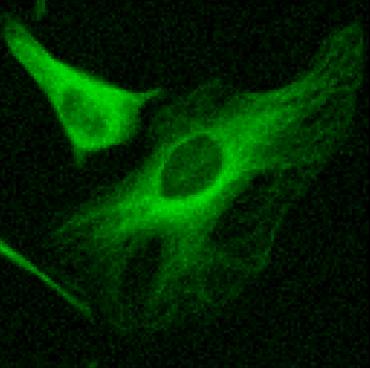}\\
		{\scriptsize RCS after $10$ epochs}
	\end{minipage}
\begin{minipage}[t]{0.24\linewidth}
		\centering
		\includegraphics[width=1.8in]{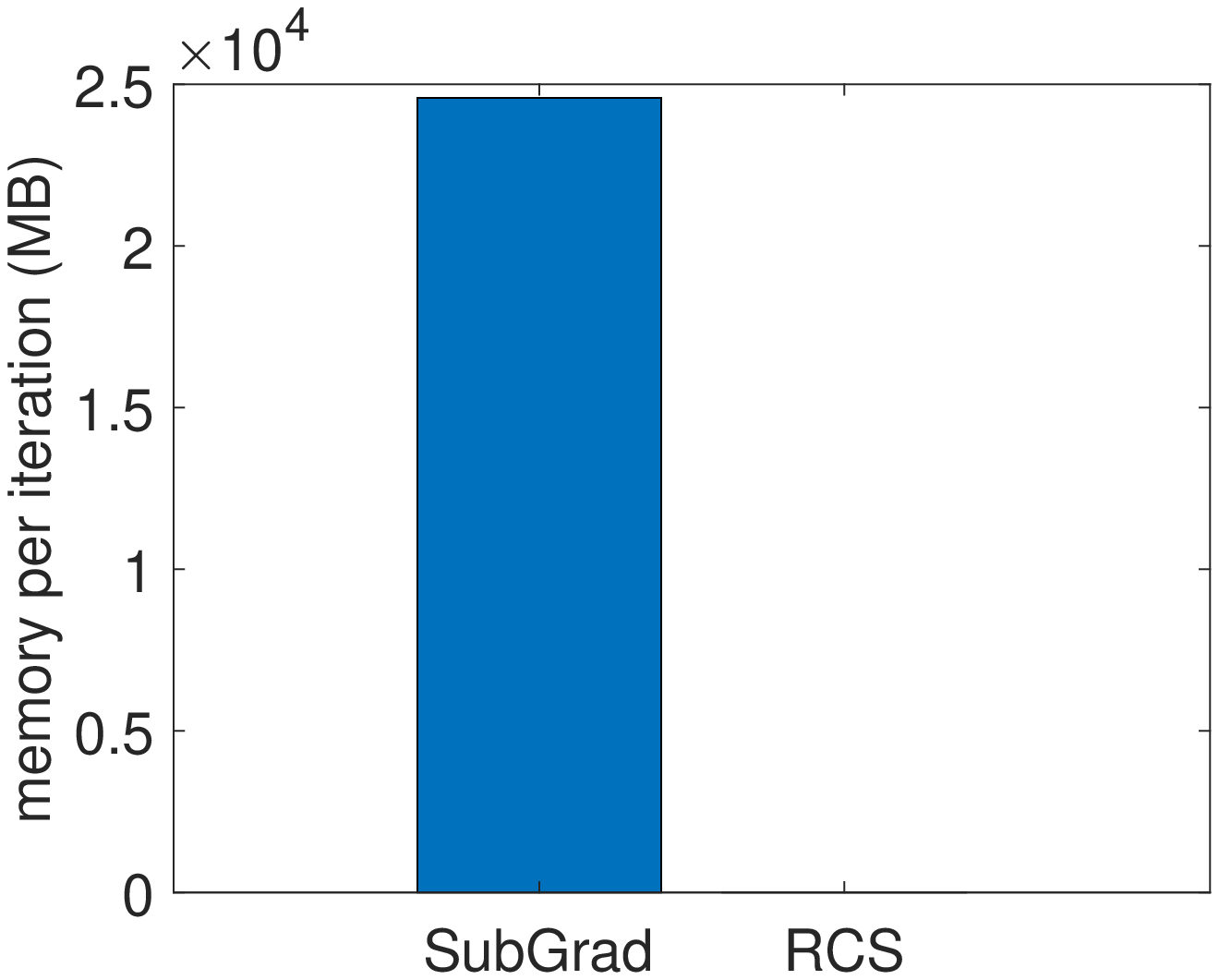}\\
		{\scriptsize  memory per iteration (MB)}
	\end{minipage}
	\caption{\small Experiments on the robust phase retrieval problem with gold nanoparticles correlative images.}\label{fig:01}
\end{figure}

We compare RCS (set $N=d$) with SubGrad on the robust phase retrieval problem \eqref{eq:pr} (i.e., Application 3). We display the problem below for convenience.
\[
\min_{x\in\R^d}\ f(x) := \frac{1}{n}\|(Ax)^{\circ2}-b^{\circ2}\|_1,
\]
where $A=\left(a_1,...,a_n\right)^{\top}\in\R^{n\times d}$ is the data matrix, $b=\left(b_1,...,b_n\right)^{\top}\in\R^n$ is a vector, $\circ$ represents the Hadamard product,  and $\circ2$ is the Hadamard power.

To implement RCS, we introduce the block-wise partition of the columns of the matrix ${A}=\left({A}_{1},{A}_{2},\cdots,{A}_{N}\right)$, where $ A_{i} \in \R^{n\times d_{i}}$ is the $i$-th block.
Then,  the coordinate subgradient $r_{i(k)}^k$ used in RCS can be computed as:
\[
\left\{
\begin{aligned}
	&r_{i(k)}^k = \frac{2}{n}A_{i(k)}^{\top}\left(s^k\circ\sign((s^k)^{\circ2}-b^{\circ2})\right),\\
	&s^{k+1}=s^k+A_{i(k)}(x_{i(k)}^{k+1}-x_{i(k)}^{k}),
\end{aligned}
\right.
\]
where $s^0=Ax^0$.

To test the algorithms, we use the gold nanoparticles correlative images from~\cite{Nature} as the true solution $x^*\in\R^d$. Let $H\in\{-1,1\}^{d\times d}/\sqrt{d}$ be a symmetric normalized Hadamard matrix which satisfies the equation $H^2=I_d$. For some integer $m$, we generate $m$ i.i.d. diagonal sign matrices $S_1,...,S_m\in\diag(\{-1,1\}^d)$ uniformly at random, and define $n=md$ and $A=\left(a_1,...,a_n\right)^{\top}=\left[HS_1,HS_2,...,HS_m\right]^{\top}\in\R^{n\times d}$. Next we generate the outlier vector $\delta=(\delta_1,...,\delta_n)^{\top}\in\R^n$. We randomly select $p_{\mbox{fail}}\cdot n$ locations. Then, we fill each of the selected locations with an i.i.d. $\mathcal{N}(0,1000)$ Gaussian distribution, while the remaining locations are set to $0$. Here, $p_{\mbox{fail}}$ is the ratio of outliers. The measurement vector $(b_1^2,...,b_n^2)^{\top}\in\R^n$ is obtained by $b_i^2=\left\{\begin{array}{lc}(a_i^{\top}x^*)^2&\delta_i=0\\ \delta_i&\delta_i\neq0\end{array}\right.$, for $i=1,...,n$.

In~\Cref{fig:01}, we can see that RCS with just 10 epochs outputs a much clearer retrieved image than the 10 epochs' output of SubGrad. In addition,  RCS requires much less workspace memory than SubGrad per iteration. These properties of RCS are crucial for solving such a huge-scale problem since performing one iteration of SubGrad can be intractable in the scenario where the computational sources are limited. Actually, this is exactly the reason why we do not perform this simulation on the device used for the previous two examples as performing one iteration of SubGrad causes out of memory issue if we use the personal computer.
More epochs are displayed in \Cref{fig:4}.

\section{Conclusion} \label{sec:ccl}
In this paper, we introduced RCS for solving nonsmooth convex and weakly convex composite optimization problems. We considered a general linearly bounded subgradients assumption and conducted a thorough convergence analysis. We also derived a convergence lemma, the relationship between the global metric subregularity properties of a weakly convex function and its Moreau envelope, and the global metric subregularity of the (real-valued) robust phase retrieval problem, which are of independent interests. Moreover, we conducted several experiments to show the superiority of RCS over the subgradient method. Our work reveals a provable and efficient subgradient-type method for a general class of large-scale nonsmooth composite optimization problems.



\appendix
\section{Appendix}
\subsection{Proof of \Cref{lemma:inequality_iter_cvx}}\label{supp:inequality_iter_cvx}

	Taking $x=x^*$ in part (b) of~\Cref{lemma:inequality_iter} and by the convexity of $f$, we can compute
	\begingroup
	\allowdisplaybreaks
	\[
	\begin{aligned}
	\mathbb{E}_{i(k)}\left[\|x^{k+1}-x^*\|^2\right]
		\leq&\|x^{k}-x^*\|^2+\frac{\alpha_k^2}{N}(L_1\|x^k\|+L_2)^2-\frac{2\alpha_k}{N}(f(x^{k})-f^*)\\
		\leq&\|x^{k}-x^*\|^2+\frac{\alpha_k^2}{N}(2L_1^2\|x^k\|^2+2L_2^2)-\frac{2\alpha_k}{N}(f(x^{k})-f^*)\\
		\leq&\|x^{k}-x^*\|^2+\frac{\alpha_k^2}{N}\left(4L_1^2\|x^*-x^k\|^2+4L_1^2\|x^*\|^2+2L_2^2\right)-\frac{2\alpha_k}{N}(f(x^{k})-f^*),
	\end{aligned}
	\]
	\endgroup
	which yields the desired result.

\subsection{Proof of \Cref{cor:rate_qg}}\label{supp:rate_qg}
	By~\Cref{lemma:inequality_iter_cvx_dist}, we have
	\en
		\mathbb{E}_{\mathcal{F}_{k}} [\dist^2(x^{k+1},\setX^*)]-\mathbb{E}_{\mathcal{F}_{k-1}}[\dist^2(x^{k},\setX^*)]
		\leq\frac{C_1}{N}\alpha_k^2-\frac{2\alpha_k}{N}\mathbb{E}_{\mathcal{F}_{k-1}}(f(x^k)-f^*).	
	\een
	Since $f$ satisfy the global quadratic growth condition with $\kappa_3$, we obtain
	\be\label{eq:rate_qg}
		\mathbb{E}_{\mathcal{F}_{k}} [\dist^2(x^{k+1},\setX^*)]-\mathbb{E}_{\mathcal{F}_{k-1}}[\dist^2(x^{k},\setX^*)]
		\leq\frac{C_1}{N}\alpha_k^2-\frac{2\alpha_k}{N\kappa_3}\mathbb{E}_{\mathcal{F}_{k-1}}[\dist^2(x^k,\setX^*)].
	\ee
	Let $\alpha_k=\frac{N\kappa_3}{k+1}$. \eqref{eq:rate_qg} reduces to
	\[
\mathbb{E}_{\mathcal{F}_{k}} [\dist^2(x^{k+1},\setX^*)]-\mathbb{E}_{\mathcal{F}_{k-1}}[\dist^2(x^{k},\setX^*)]\\
		\leq\frac{NC_1\kappa_3^2}{(k+1)^2}-\frac{2}{k+1}\mathbb{E}_{\mathcal{F}_{k-1}}[\dist^2(x^k,\setX^*)].
	\]
	Multiplying both sides of the above inequality by $(k+1)^2$ gives
	\[
(k+1)^2\mathbb{E}_{\mathcal{F}_{k}} [\dist^2(x^{k+1},\setX^*)]
		-\left[(k+1)^2-2k-2\right]\mathbb{E}_{\mathcal{F}_{k-1}}[\dist^2(x^{k},\setX^*)]\leq NC_1\kappa_3^2.
	\]
	Note the fact that $(k+1)^2-2k-2=k^2-1\leq k^2$.  Then, we further have
	\[
(k+1)^2\mathbb{E}_{\mathcal{F}_{k}} [\dist^2(x^{k+1},\setX^*)]-k^2\mathbb{E}_{\mathcal{F}_{k-1}}[\dist^2(x^{k},\setX^*)]\leq NC_1\kappa_3^2.
	\]
	Therefore,
	\[
	\begin{aligned}
		(k+1)^2\mathbb{E}_{\mathcal{F}_{k}} [\dist^2(x^{k+1},\setX^*)]	=&\sum_{j=0}^k\bigg{[}(j+1)^2\mathbb{E}_{\mathcal{F}_{j}} [\dist^2(x^{j+1},\setX^*)]-j^2\mathbb{E}_{\mathcal{F}_{j-1}}[\dist^2(x^{j},\setX^*)]\bigg{]}\\
		\leq& NC_1\kappa_3^2(k+1),
	\end{aligned}
	\]
	which yields
	\[
	\mathbb{E}_{\mathcal{F}_{k}} [\dist^2(x^{k+1},\setX^*)]\leq\frac{NC_1\kappa_3^2}{k+1}.
	\]

\subsection{Proof of~\Cref{fact:pr_boundcritical}}\label{appen:pr_boundcritical}
The robust phase retrieval problem~\eqref{eq:pr} can be rewritten as
\be\label{eq:pr_matrix}
\min_{x\in\R^d}\quad f(x)=\frac{1}{n}\|(Ax)^{\circ2}-b^{\circ2}\|_1=\frac{1}{n}\sum_{i=1}^n|\left(a_i^{\top}x\right)^{2}-b_i^2|,
\ee
where $A=\left(a_1,...,a_n\right)^{\top}\in\R^{n\times d}$, $b=(b_1,...,b_n)^{\top}\in\R^{n}$ and $\circ2$ is the Hadamard power. And the subgradient of robust phase retrieval problem is
\[
	\partial f(x)=\frac{2}{n}A^{\top}\left(Ax\circ\xi\right)=\frac{2}{n}\sum_{i=1}^na_i\left(a_i^{\top}x\cdot\xi_i\right)\quad\mbox{with}\quad\xi=\left(\xi_1,...,\xi_n\right)^{\top},
\]
$\xi_i\in\left\{\begin{aligned}\{-1\}&\qquad(a_i^{\top}x)^2-b_i^2<0\\ [-1,1]&\qquad(a_i^{\top}x)^2-b_i^2=0\\ \{1\}&\qquad(a_i^{\top}x)^2-b_i^2>0\end{aligned}\right.$, and $\circ$ is the Hadamard product.

Intuitively, for any given $x\in\R^d$, we can divided the index of datasets $\{(a_1,b_1),...,(a_n,b_n)\}$ into three subsets:
\be\label{eq:pr_bd_0}
\begin{aligned}
	\setI_1&=\{i\in\{1,...,n\}|(a_i^{\top}x)^2=b_i^2\},\\
	\setI_2&=\{i\in\{1,...,n\}|(a_i^{\top}x)^2>b_i^2\},\\
	\setI_3&=\{i\in\{1,...,n\}|(a_i^{\top}x)^2<b_i^2\}.
\end{aligned}
\ee
We can easily find the fact that $\setI_1\cap\setI_2=\emptyset$, $\setI_2\cap\setI_3=\emptyset$, $\setI_1\cap\setI_3=\emptyset$, and $\setI_1\cup\setI_2\cup\setI_3=\{1,...,n\}$. For given $x\in\R^d$, the subdifferential of robust phase retrieval problem can be rewritten as
\be\label{eq:pr_bd_1}
\partial f(x)=\frac{2}{n}\left[\sum_{i\in\setI_1} a_i\left(a_i^{\top}x\cdot[-1,1]\right)+\sum_{i\in\setI_2} a_ia_i^{\top}x-\sum_{i\in\setI_3} a_ia_i^{\top}x\right].
\ee
If $x=\bar{x}\in\overline{\setX}$,~\eqref{eq:pr_bd_1} yields that
\be\label{eq:pr_bd_2}
\sum_{i\in\setI_2} a_ia_i^{\top}\bar{x}=\sum_{i\in\setI_3} a_ia_i^{\top}\bar{x}-\sum_{i\in\setI_1} a_i\left(a_i^{\top}\bar{x}\cdot\delta_i\right),
\ee
with $\delta_i\in[-1,1]$, $i=1,...,n$. By the full column rank of $A$, we denote the positive definite matrix $Q=A^{\top}A$, and $\sigma_{min}(Q^{\top}Q)$ be the minimum eigenvalue of matrix $Q^{\top}Q$. Then we can derive that
\be\label{eq:pr_bd_3}
\begin{aligned}
\sqrt{\sigma_{\min}(Q^{\top}Q)}\|\bar{x}\|
	\leq&\|A^{\top}A\bar{x}\|\\
	=&\|\sum_{i=1}^n a_ia_i^{\top}\bar{x}\|\\
	=&\|\sum_{i\in\setI_1} a_ia_i^{\top}\bar{x}+\sum_{i\in\setI_2} a_ia_i^{\top}\bar{x}+\sum_{i\in\setI_3} a_ia_i^{\top}\bar{x}\|\\
	=&\|2\sum_{i\in\setI_3} a_ia_i^{\top}\bar{x}+\sum_{i\in\setI_1} a_ia_i^{\top}\bar{x}-\sum_{i\in\setI_1} a_i\left(a_i^{\top}\bar{x}\cdot\delta_i\right)\|\quad\mbox{(by~\eqref{eq:pr_bd_2})}\\
	\leq&2\sum_{i\in\setI_3}\|a_i\|\cdot|a_i^{\top}\bar{x}|+2\sum_{i\in\setI_1}\|a_i\|\cdot|a_i^{\top}\bar{x}|.
\end{aligned}
\ee
By~\eqref{eq:pr_bd_0}, we have $|a_i^{\top}\bar{x}|=|b_i|$, $\forall i\in\setI_1$ and $|a_i^{\top}\bar{x}|<|b_i|$, $\forall i\in\setI_3$. Then~\eqref{eq:pr_bd_3} yields that
\[
\|\bar{x}\|\leq\frac{2\sum_{i\in\setI_3}\|a_i\|\cdot|b_i|+2\sum_{i\in\setI_1}\|a_i\|\cdot|b_i|}{\sqrt{\sigma_{\min}(Q^{\top}Q)}}.
\]
Therefore, under the assumption that $A$ is full column rank, the critical points set of the robust phase retrieval problem~\eqref{eq:pr} is bounded, i.e., $\max_{\bar{x}\in\overline{\setX}}\|\bar{x}\|\leq\sB_2=\frac{2\sum_{i\in\setI_3}\|a_i\|\cdot|b_i|+2\sum_{i\in\setI_1}\|a_i\|\cdot|b_i|}{\sqrt{\sigma_{\min}(Q^{\top}Q)}}$.

\subsection{Proof of \Cref{cor:asymptoticrate_wcvx}}\label{supp:asymptoticrate_wcvx}

Note that the step sizes $\alpha_k = \frac{\Delta}{\sqrt{k+1}\log(k+2)}$. The two conditions in~\eqref{eq:step size cond} are automatically satisfied since
	\[
	\sum_{j=0}^k\alpha_j^2 \leq  \alpha_0^2 + \int_{0}^{k} \frac{\Delta^2}{(t+1)\log^2(t+2)} dt \leq \frac{2\Delta^2}{(\log 2)^2},
	\]
	and
	\[
	\sum_{j=0}^{k}\alpha_j \geq \sum_{j=0}^{k} \frac{\Delta}{\sqrt{k+1}\log(k+2)} = \frac{\Delta \sqrt{k+1}}{\log(k+2)}.
	\]
	We now prove this corollary by contradiction. Let us assume that the statement does not hold. Then, we have
	\[
	\mathbb{P}\left(\left\{ w\in \Omega: \liminf_{k\rightarrow\infty} \ \sqrt{k+1}\|\nabla f_{\lambda}(x^k(w))\|^2\geq \delta \right\}\right) >0,
	\]
	for some $\delta>0$. Then, there exists a sufficiently large $\overline{k}$ such that
	\[
	\mathbb{P}\left(\left\{ w\in \Omega: \sqrt{k+1} \ \|\nabla f_{\lambda}(x^k(w))\|^2 \geq \delta \right\}\right)>0
	\]
	for all $k\geq \overline{k}$. Hence, we have
	\be\label{eq:contradict 3}
	\mathbb{P}\left(\left\{w\in \Omega:  \sum_{k=\overline{k}}^{\infty}\frac{1}{\sqrt{k+1}\log(k+2)}\|\nabla f_{\lambda}(x^k(w))\|^2 \geq\sum_{k=\overline{k}}^{\infty}\frac{\delta}{(k+1)\log(k+2)}\right\}\right)>0.
	\ee
	By the integral comparison test, we have  $\sum_{k=\overline{k}}^{\infty}\frac{1}{(k+1)\log(k+2)}  \geq \int_{\overline{k} + 2}^{\infty} \frac{1}{t\log(t)} dt = \infty$.
	Note that $\alpha_k = \frac{\Delta}{\sqrt{k+1}\log(k+2)}$. Therefore,  \eqref{eq:contradict 3} is a contradiction to \eqref{eq:as_rate_contradiction}.

\subsection{RCS for M-estimators problem with MCP-loss and $\ell_1$-penalty, and more experiments}\label{num:MCPMestimator}
In this subsection, we consider the M-estimators problem \eqref{eq:M-esti} with MCP-loss and $\ell_1$-penalty, i.e.,
\[
\min\limits_{x\in\R^d} \ f(x):=\frac{1}{n}\sum_{i=1}^n\phi_{p_1}(a_i^{\top}x-b_i)+p_2\|x\|_1,
\]
where $a_i\in\R^d$ is a vector,  $b_i\in\R$ is a scalar, $p_2>0$ is the regularization parameter, and $\phi_{p_1}(z)=\left\{\begin{array}{ll}|z|-\frac{z^2}{2p_1}&|z|\leq p_1\\ \frac{1}{2}p_1&|z|> p_1\end{array}\right.$ is the MCP-loss.

Let $A=(a_1,...,a_n)^{\top}\in\R^{n\times d}$ be a matrix and $b=(b_1,...,b_n)^{\top}\in\R^n$ be a vector.  In order to implement RCS, we introduce the block-wise partition of the columns of the data matrix $A=(A_{1},A_{2},\cdots,A_{N})$, where $A_{i} \in \R^{n\times d_{i}}$ is the $i$-th block. For this problem, the coordinate subgradient used in RCS can be calculated as:
\[
\left\{
\begin{aligned}
	&r_{i(k)}^k = \frac{1}{n}A_{i(k)}^{\top}u^k+p_2\sign(x_{i(k)}^k),\\
	&s^{k+1}=s^k+A_{i(k)}(x_{i(k)}^{k+1}-x_{i(k)}^{k}),
\end{aligned}
\right.
\]
where $u^k \in \partial\Phi(s^k)=\left(\partial\phi_{p_1}(s_1^k),...,\partial\phi_{p_1}(s_n^k)\right)^{\top}$ with $\partial\phi_{p_1}(z)=\left\{\begin{array}{ll}\sign(z)-\frac{z}{p_1}&|z|\leq p_1\\ 0&|z|> p_1\end{array}\right.$ and $s^0=Ax^0-b$.

We conduct our simulation based on the synthetic data generated in \Cref{sec:M-esti}. In~\Cref{fig:2} and~\Cref{fig:2-1}, we show the evolution of $f(x^k)-f^*$ and $\dist(x^k-x^*)$ versus epoch counts with different number of blocks $N$.  Similar to the conclusions draw from \Cref{fig:00}, we can observe that RCS with larger $N$ converges faster than RCS with smaller $N$ and SubGrad. In addition, larger $N$ requires less workspace memory than small $N$ and SubGrad per iteration. Finally, RCS can effectively recover the sparse coefficients.
\begin{figure}
	\begin{minipage}[t]{0.24\linewidth}
		\centering
		\includegraphics[width=1.7in]{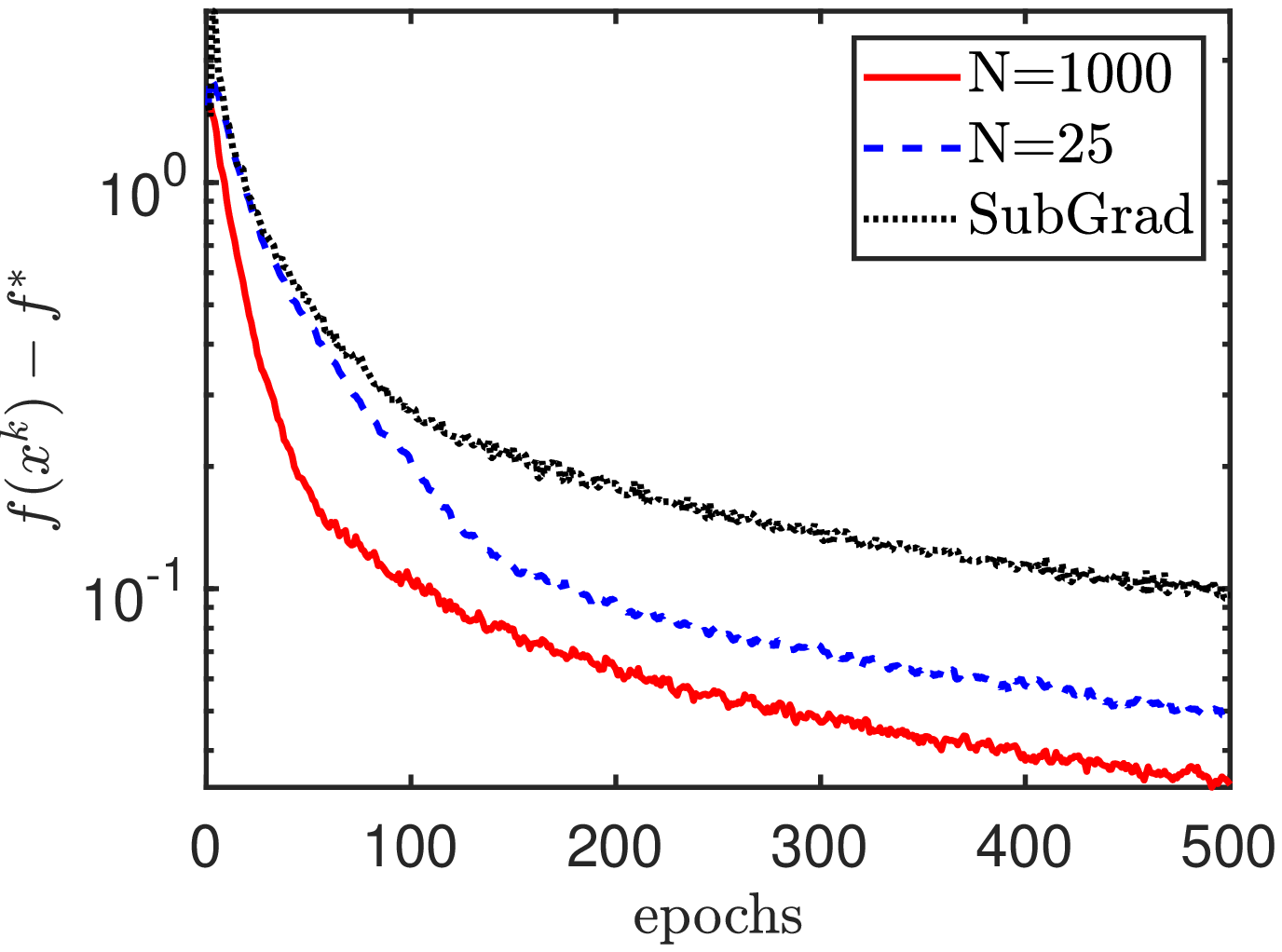}
	\end{minipage}
    \begin{minipage}[t]{0.24\linewidth}
		\centering
		\includegraphics[width=1.7in]{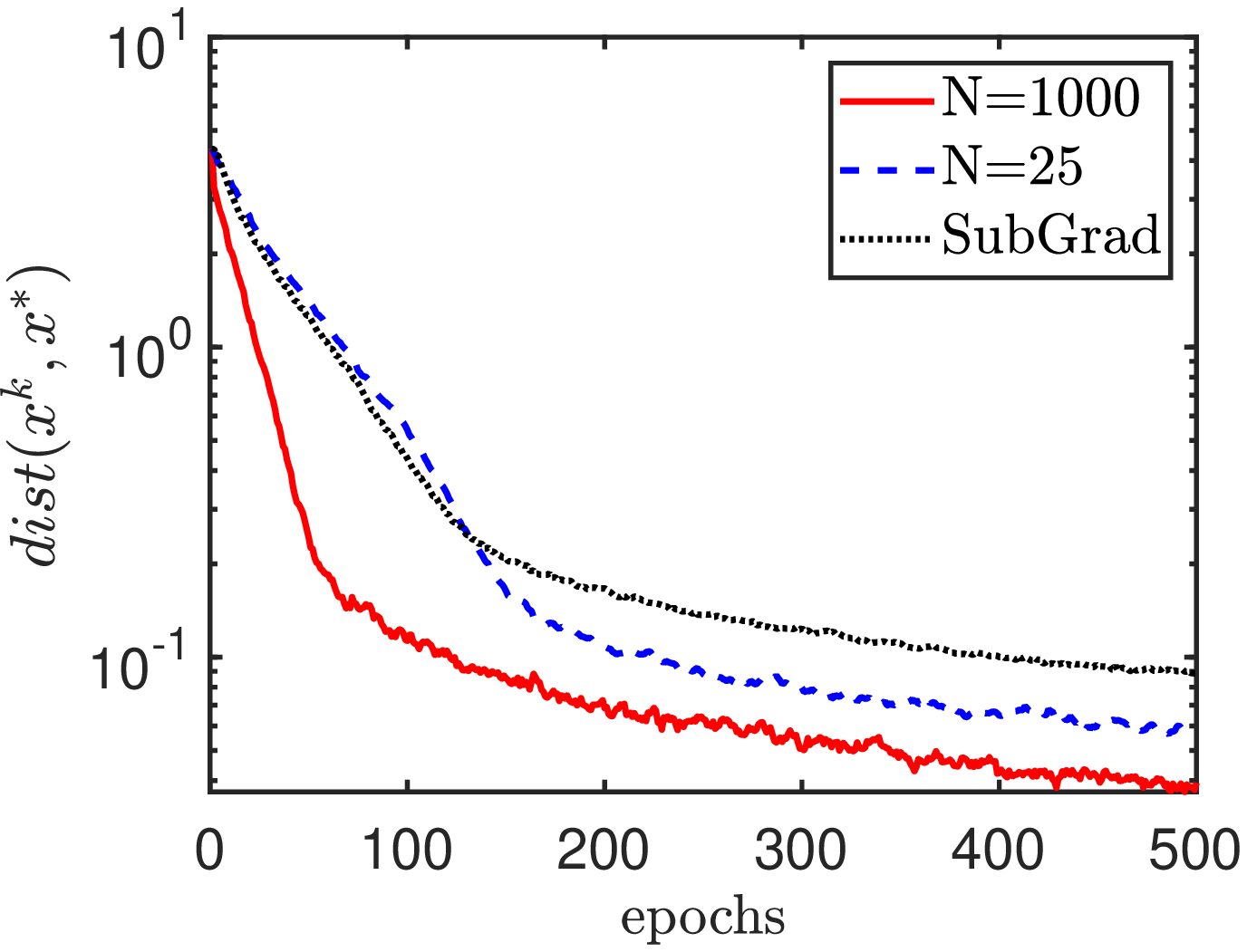}
	\end{minipage}
	\begin{minipage}[t]{0.24\linewidth}
		\centering
		\includegraphics[width=1.7in]{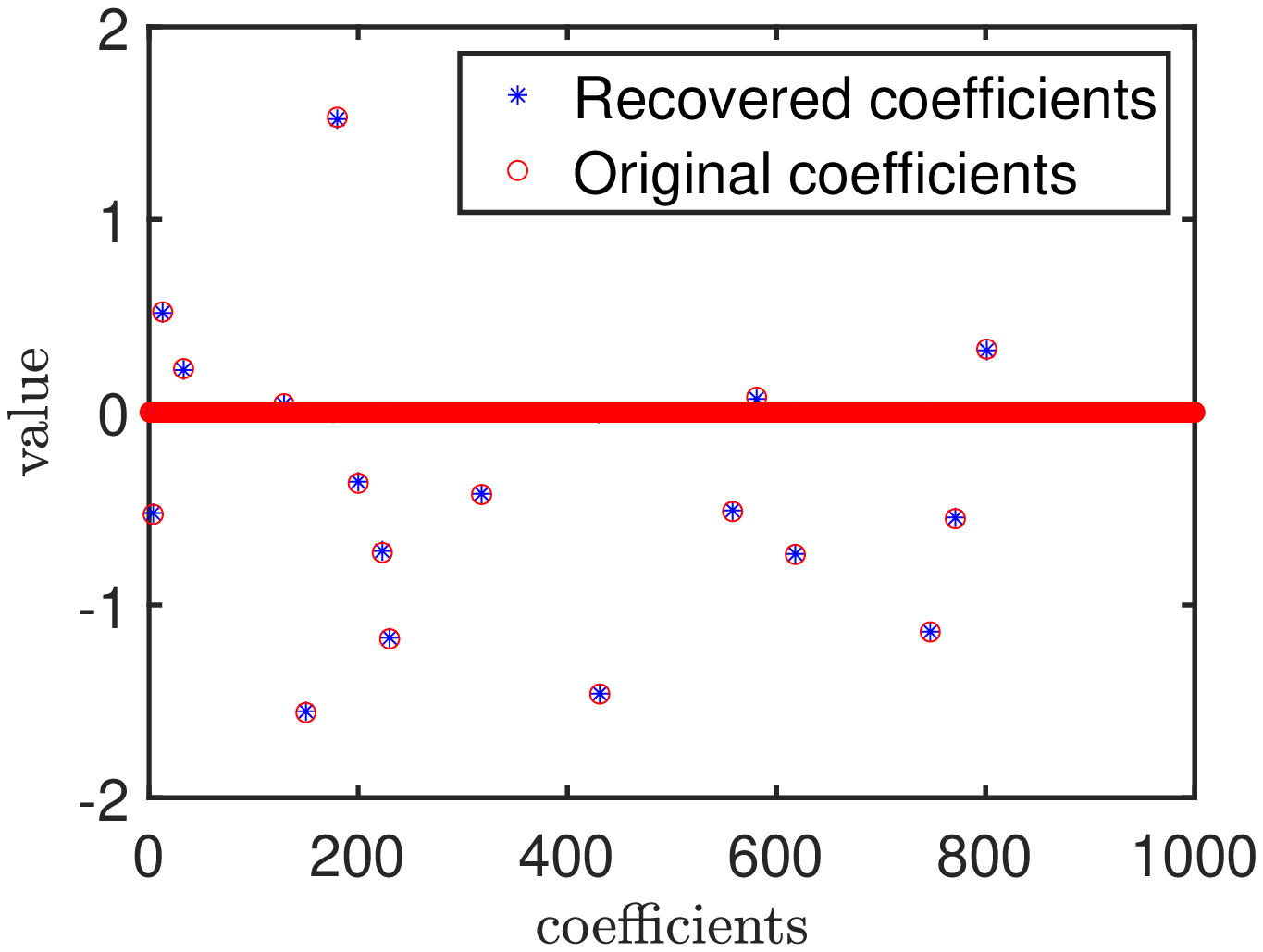}
	\end{minipage}
	\begin{minipage}[t]{0.24\linewidth}
		\centering
		\includegraphics[width=1.7in]{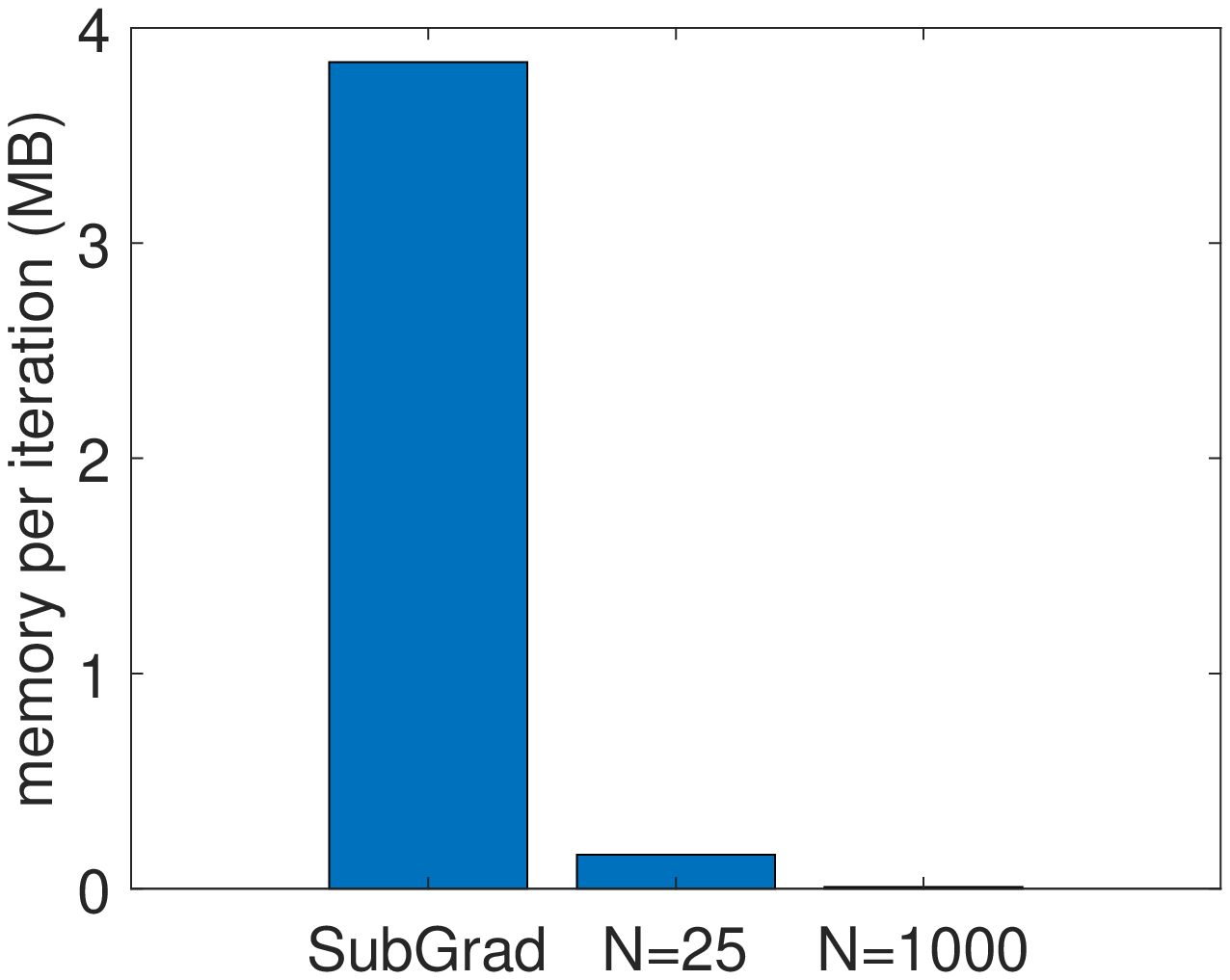}
	\end{minipage}
	\caption{Experiments on Robust M-estimators problem with MCP-loss and $\ell_1$-penalty. {\tt Top Left:} $f(x^k)-f^*$ versus epoch counts for different choices of $N$ (i.e., the number of blocks in RCS); {\tt Top Right:} $\dist(x^k,x^*)$ versus epoch counts for different choices of $N$; {\tt Bottom Left:} Coefficients recovery by RCS with $N=d$; {\tt Bottom Right:} Comparison on workspace memory consumption per iteration.  Here, $n = 500$, $d=1000$, $s = 20$, and $p_{\mbox{fail}} = 0.25$.\label{fig:2}}
\end{figure}
\begin{figure}
	\begin{minipage}[t]{0.24\linewidth}
		\centering
		\includegraphics[width=1.7in]{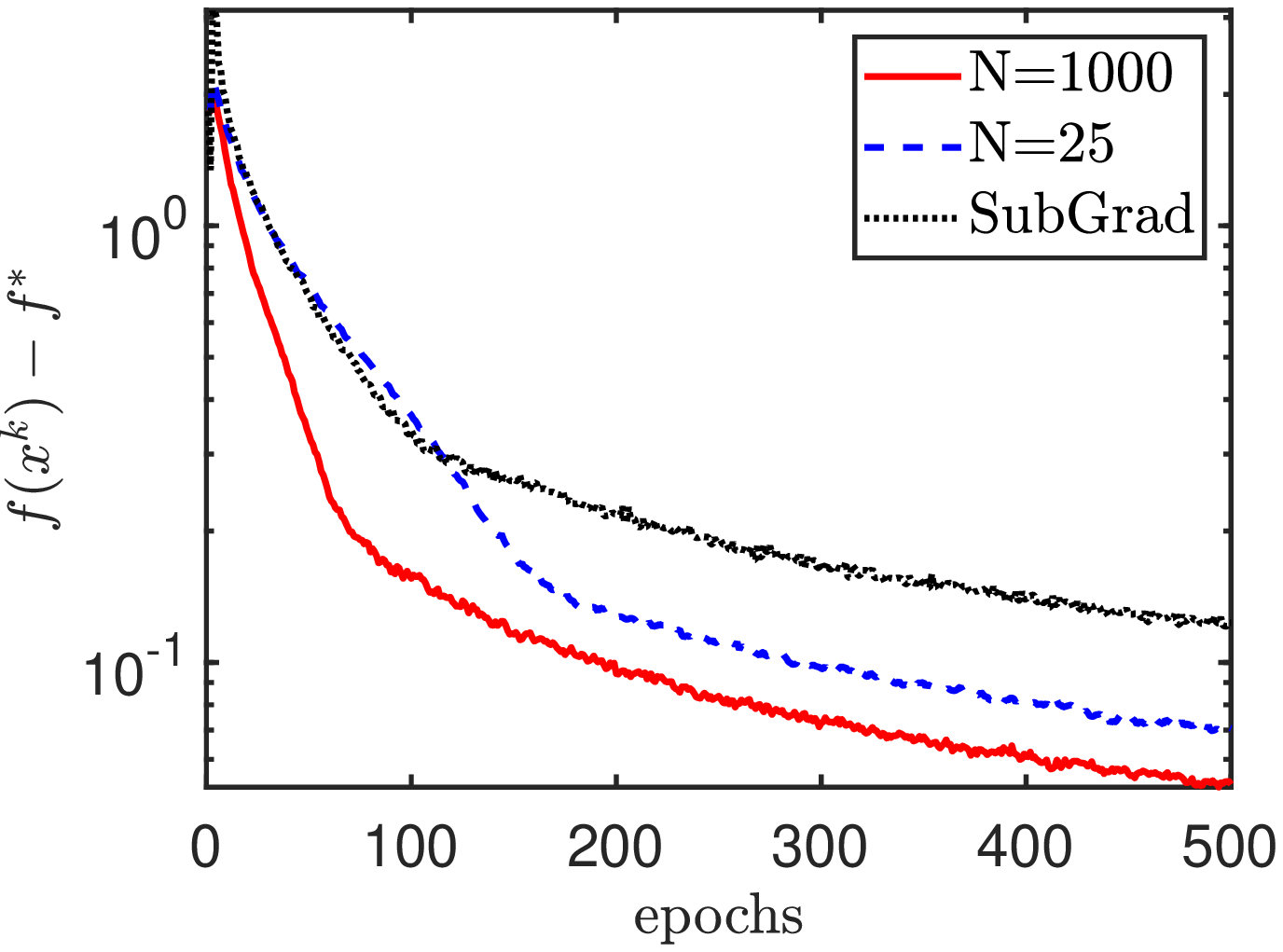}
	\end{minipage}
    \begin{minipage}[t]{0.24\linewidth}
		\centering
		\includegraphics[width=1.7in]{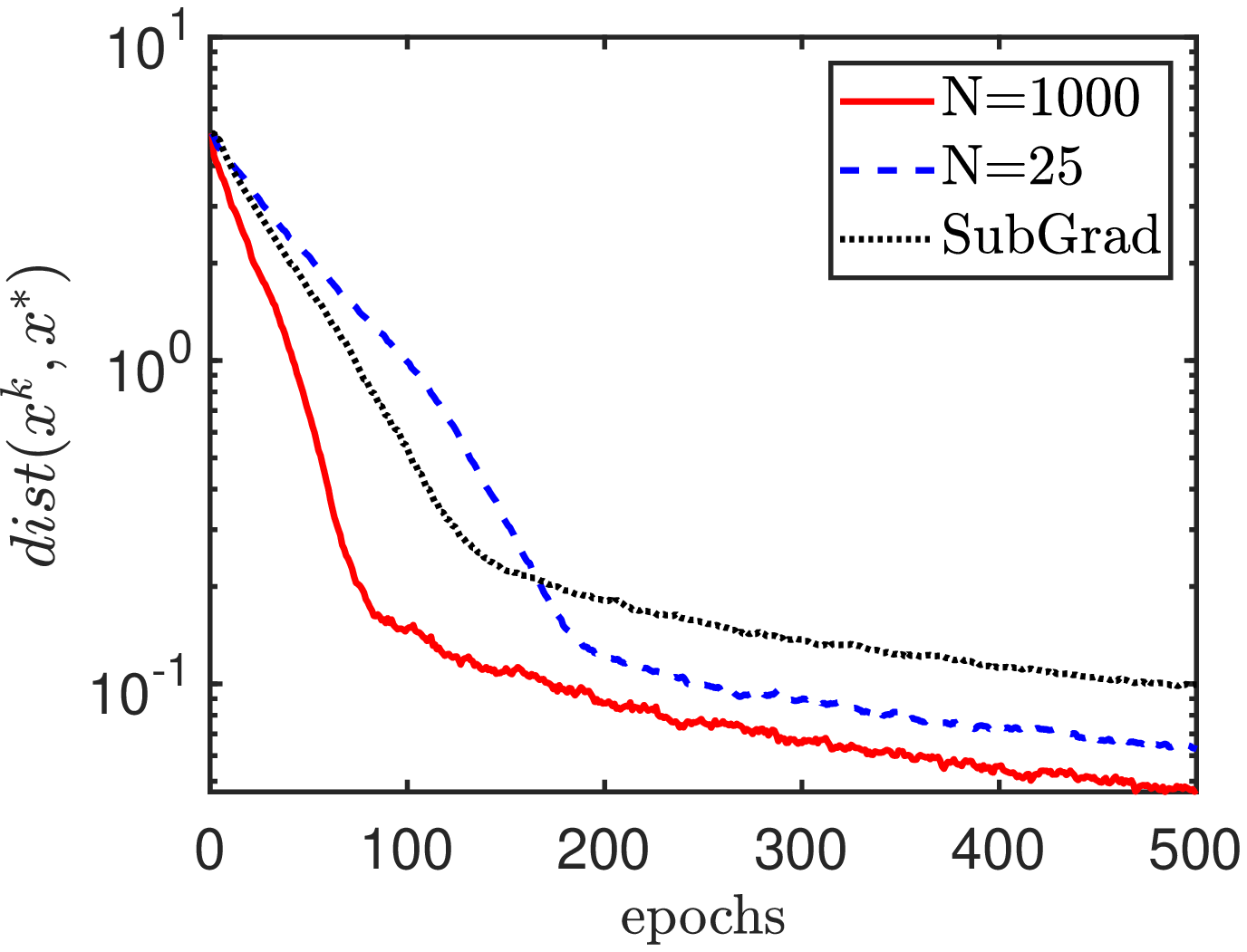}
	\end{minipage}
    \begin{minipage}[t]{0.24\linewidth}
		\centering
		\includegraphics[width=1.7in]{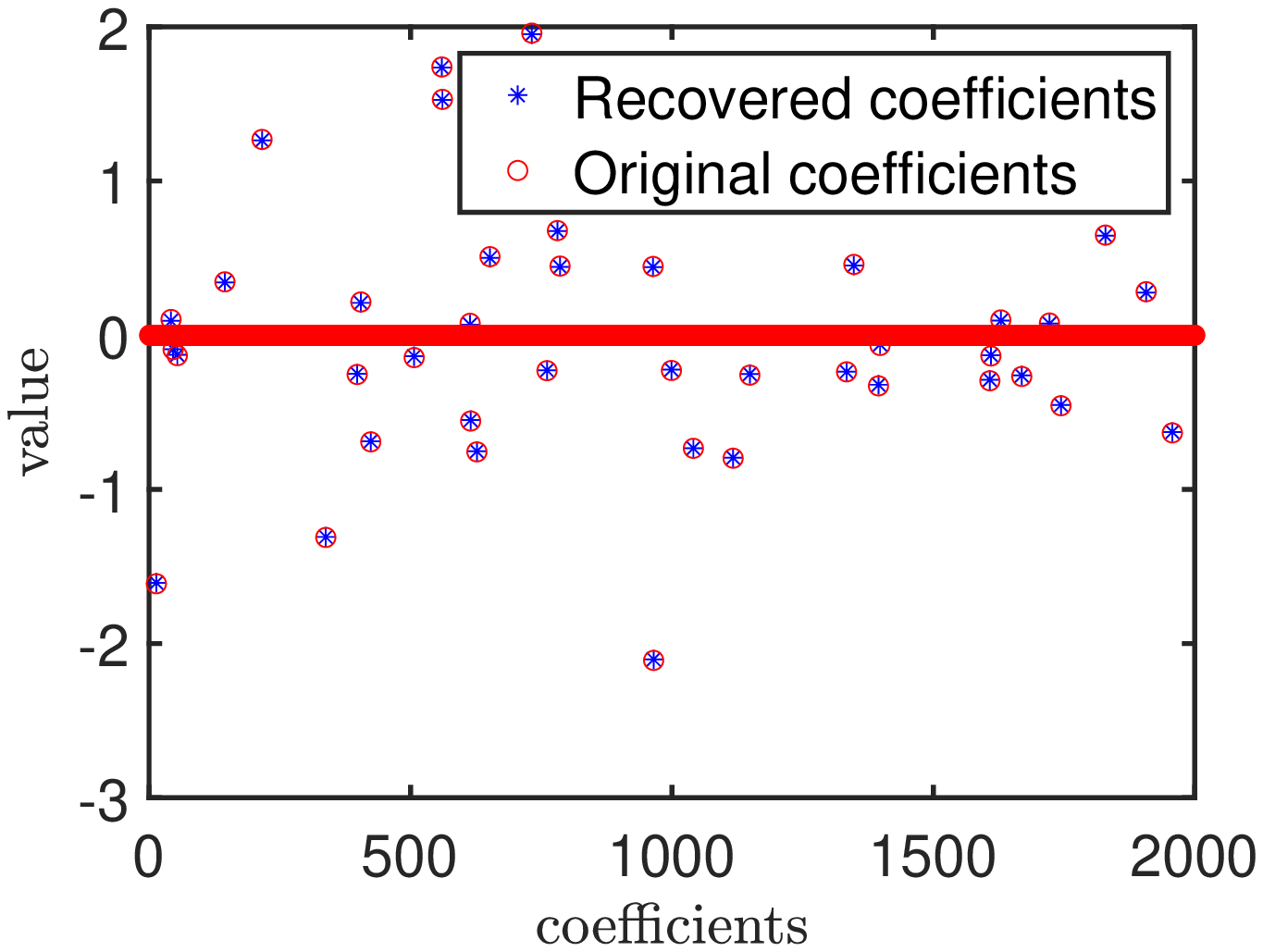}
	\end{minipage}
	\begin{minipage}[t]{0.24\linewidth}
		\centering
		\includegraphics[width=1.7in]{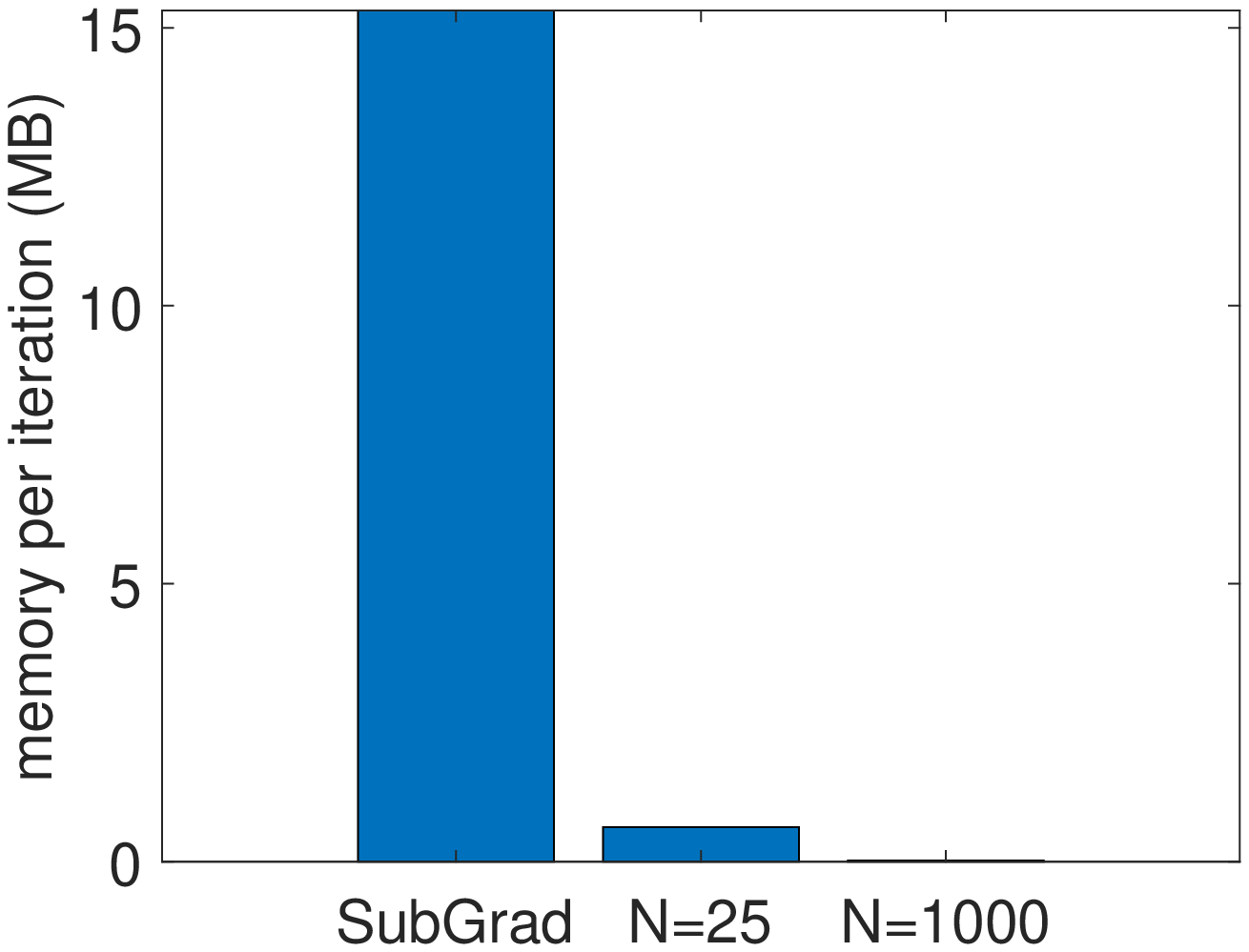}
	\end{minipage}
	\caption{Experiments on Robust M-estimators problem with MCP-loss and $\ell_1$-penalty. {\tt Top Left:} $f(x^k)-f^*$ versus epoch counts for different choices of $N$ (i.e., the number of blocks in RCS); {\tt Top Right:} $\dist(x^k,x^*)$ versus epoch counts for different choices of $N$; {\tt Bottom Left:} Coefficients recovery by RCS with $N=d$; {\tt Bottom Right:} Comparison on workspace memory consumption per iteration. Here, $n = 1000$, $d=2000$, $s = 40$, and $p_{\mbox{fail}} = 0.25$. \label{fig:2-1}}
\end{figure}

\begin{figure}[htbp]
	\centering
	\begin{minipage}[t]{0.24\linewidth}
		\centering
		\includegraphics[width=1.4in]{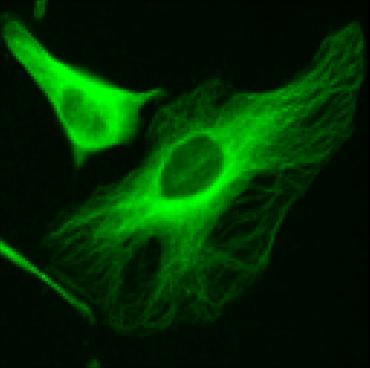}
		{\scriptsize original image}
	\end{minipage}
	\begin{minipage}[t]{0.24\linewidth}
		\centering
		\includegraphics[width=1.4in]{result_v0RCS_random.jpg}
		{\scriptsize initial image (initial point)}
	\end{minipage}\\
	\begin{minipage}[t]{0.24\linewidth}
		\centering
		\includegraphics[width=1.4in]{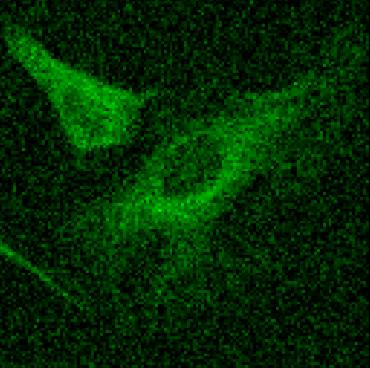}
		{\scriptsize RCS after $5$ epochs}
	\end{minipage}
	\begin{minipage}[t]{0.24\linewidth}
		\centering
		\includegraphics[width=1.4in]{result_v2RCS_random.jpg}
		{\scriptsize RCS after $10$ epochs}
	\end{minipage}
	\begin{minipage}[t]{0.24\linewidth}
		\centering
		\includegraphics[width=1.4in]{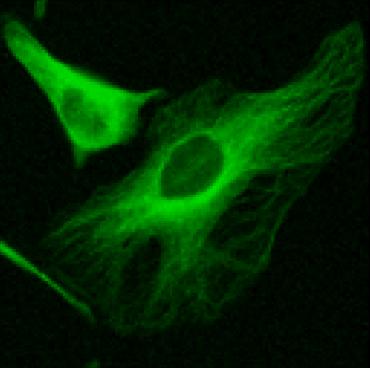}
		{\scriptsize RCS after $15$ epochs}
	\end{minipage}
	\begin{minipage}[t]{0.24\linewidth}
		\centering
		\includegraphics[width=1.4in]{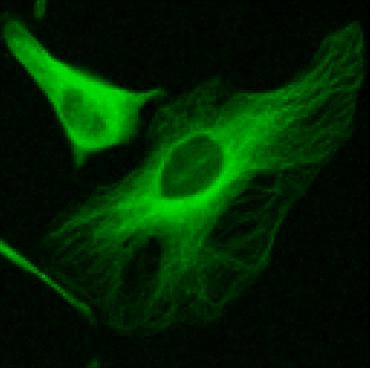}
		{\scriptsize RCS after $20$ epochs}
	\end{minipage}\\
	\begin{minipage}[t]{0.24\linewidth}
		\centering
		\includegraphics[width=1.4in]{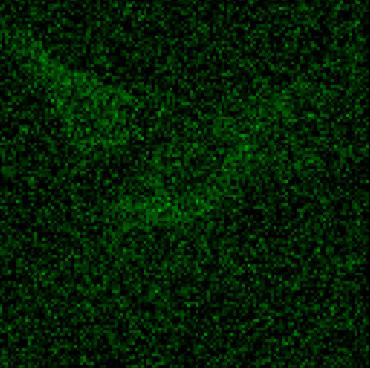}
		{\scriptsize SubGrad after $5$ epochs}
	\end{minipage}
	\begin{minipage}[t]{0.24\linewidth}
		\centering
		\includegraphics[width=1.4in]{result_v2subgradient.jpg}
		{\scriptsize SubGrad after $10$ epochs}
	\end{minipage}
	\begin{minipage}[t]{0.24\linewidth}
		\centering
		\includegraphics[width=1.4in]{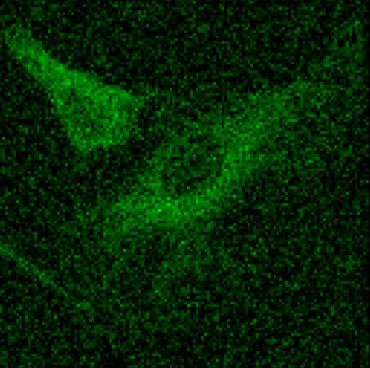}
		{\scriptsize SubGrad after $15$ epochs}
	\end{minipage}
	\begin{minipage}[t]{0.24\linewidth}
		\centering
		\includegraphics[width=1.4in]{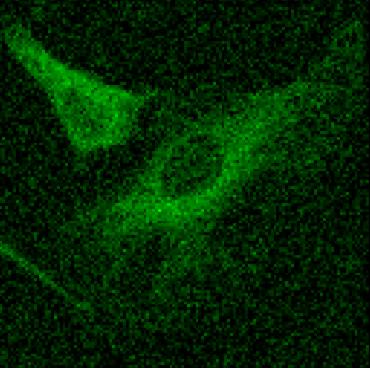}
		{\scriptsize SubGrad after $20$ epochs}
	\end{minipage}
	\caption{ A supplemental to \Cref{fig:01} by showing more epochs.}\label{fig:4}
\end{figure}

\bibliographystyle{IEEEtran}
\bibliography{paper}

\begin{thebibliography}{10}
\providecommand{\url}[1]{#1}
\csname url@samestyle\endcsname
\providecommand{\newblock}{\relax}
\providecommand{\bibinfo}[2]{#2}
\providecommand{\BIBentrySTDinterwordspacing}{\spaceskip=0pt\relax}
\providecommand{\BIBentryALTinterwordstretchfactor}{4}
\providecommand{\BIBentryALTinterwordspacing}{\spaceskip=\fontdimen2\font plus
\BIBentryALTinterwordstretchfactor\fontdimen3\font minus
  \fontdimen4\font\relax}
\providecommand{\BIBforeignlanguage}[2]{{%
\expandafter\ifx\csname l@#1\endcsname\relax
\typeout{** WARNING: IEEEtran.bst: No hyphenation pattern has been}%
\typeout{** loaded for the language `#1'. Using the pattern for}%
\typeout{** the default language instead.}%
\else
\language=\csname l@#1\endcsname
\fi
#2}}
\providecommand{\BIBdecl}{\relax}
\BIBdecl

\bibitem{Wright2015}
S.~J. Wright, ``Coordinate descent algorithms,'' \emph{Mathematical
  Programming}, vol. 151, no.~1, pp. 3--34, 2015.

\bibitem{kerahroodi2017coordinate}
M.~A. Kerahroodi, A.~Aubry, A.~De~Maio, M.~M. Naghsh, and M.~Modarres-Hashemi,
  ``A coordinate-descent framework to design low {PSL}/{ISL} sequences,''
  \emph{IEEE Transactions on Signal Processing}, vol.~65, no.~22, pp.
  5942--5956, 2017.

\bibitem{vandaele2016efficient}
A.~Vandaele, N.~Gillis, Q.~Lei, K.~Zhong, and I.~Dhillon, ``Efficient and
  non-convex coordinate descent for symmetric nonnegative matrix
  factorization,'' \emph{IEEE Transactions on Signal Processing}, vol.~64,
  no.~21, pp. 5571--5584, 2016.

\bibitem{Tseng}
P.~Tseng, ``Convergence of a block coordinate descent method for
  nondifferentiable minimization,'' \emph{Journal of optimization theory and
  applications}, vol. 109, no.~3, pp. 475--494, 2001.

\bibitem{Nesterov}
Y.~Nesterov, ``Efficiency of coordinate descent methods on huge-scale
  optimization problems,'' \emph{SIAM Journal on Optimization}, vol.~22, no.~2,
  pp. 341--362, 2012.

\bibitem{RT}
P.~Richt{\'a}rik and M.~Tak{\'a}{\v{c}}, ``Iteration complexity of randomized
  block-coordinate descent methods for minimizing a composite function,''
  \emph{Mathematical Programming}, vol. 144, no. 1-2, pp. 1--38, 2014.

\bibitem{beck2013convergence}
A.~Beck and L.~Tetruashvili, ``On the convergence of block coordinate descent
  type methods,'' \emph{SIAM journal on Optimization}, vol.~23, no.~4, pp.
  2037--2060, 2013.

\bibitem{LuXiao}
Z.~Lu and L.~Xiao, ``On the complexity analysis of randomized block-coordinate
  descent methods,'' \emph{Mathematical Programming}, vol. 152, no. 1-2, pp.
  615--642, 2015.

\bibitem{ZhuZhao2020}
D.~Zhu and L.~Zhao, ``Linear convergence of randomized primal-dual coordinate
  method for large-scale linear constrained convex programming,'' in
  \emph{International Conference on Machine Learning}.\hskip 1em plus 0.5em
  minus 0.4em\relax PMLR, 2020, pp. 11\,619--11\,628.

\bibitem{LiuWright1}
J.~Liu and S.~J. Wright, ``Asynchronous stochastic coordinate descent:
  Parallelism and convergence properties,'' \emph{SIAM Journal on
  Optimization}, vol.~25, no.~1, pp. 351--376, 2015.

\bibitem{lewis2016proximal}
A.~S. Lewis and S.~J. Wright, ``A proximal method for composite minimization,''
  \emph{Mathematical Programming}, vol. 158, pp. 501--546, 2016.

\bibitem{duchi2018stochastic}
J.~C. Duchi and F.~Ruan, ``Stochastic methods for composite and weakly convex
  optimization problems,'' \emph{SIAM Journal on Optimization}, vol.~28, no.~4,
  pp. 3229--3259, 2018.

\bibitem{davis2019stochastic}
D.~Davis and D.~Drusvyatskiy, ``Stochastic model-based minimization of weakly
  convex functions,'' \emph{SIAM Journal on Optimization}, vol.~29, no.~1, pp.
  207--239, 2019.

\bibitem{DruPaq19}
D.~Drusvyatskiy and C.~Paquette, ``Efficiency of minimizing compositions of
  convex functions and smooth maps,'' \emph{Mathematical Programming}, vol.
  178, no. 1-2, pp. 503--558, 2019.

\bibitem{LohWai15}
P.-L. Loh and M.~J. Wainwright, ``Regularized m-estimators with nonconvexity:
  Statistical and algorithmic theory for local optima,'' \emph{The Journal of
  Machine Learning Research}, vol.~16, no.~1, pp. 559--616, 2015.

\bibitem{MCP}
C.-H. Zhang \emph{et~al.}, ``Nearly unbiased variable selection under minimax
  concave penalty,'' \emph{The Annals of statistics}, vol.~38, no.~2, pp.
  894--942, 2010.

\bibitem{SCAD}
J.~Fan and R.~Li, ``Variable selection via nonconcave penalized likelihood and
  its oracle properties,'' \emph{Journal of the American statistical
  Association}, vol.~96, no. 456, pp. 1348--1360, 2001.

\bibitem{cortes1995}
C.~Cortes and V.~Vapnik, ``Support-vector networks,'' \emph{Machine learning},
  vol.~20, no.~3, pp. 273--297, 1995.

\bibitem{LSVM2}
Y.~Zhang and X.~Lin, ``Stochastic primal-dual coordinate method for regularized
  empirical risk minimization,'' in \emph{International Conference on Machine
  Learning}.\hskip 1em plus 0.5em minus 0.4em\relax PMLR, 2015, pp. 353--361.

\bibitem{fienup1982phase}
J.~R. Fienup, ``Phase retrieval algorithms: a comparison,'' \emph{Applied
  optics}, vol.~21, no.~15, pp. 2758--2769, 1982.

\bibitem{RobustPhase1}
J.~C. Duchi and F.~Ruan, ``Solving (most) of a set of quadratic equalities:
  Composite optimization for robust phase retrieval,'' \emph{Information and
  Inference: A Journal of the IMA}, vol.~8, no.~3, pp. 471--529, 2019.

\bibitem{nesterov2014subgradient}
Y.~Nesterov, ``Subgradient methods for huge-scale optimization problems,''
  \emph{Mathematical Programming}, vol. 146, no. 1-2, pp. 275--297, 2014.

\bibitem{dang2015stochastic}
C.~D. Dang and G.~Lan, ``Stochastic block mirror descent methods for nonsmooth
  and stochastic optimization,'' \emph{SIAM Journal on Optimization}, vol.~25,
  no.~2, pp. 856--881, 2015.

\bibitem{CohenZ}
G.~Cohen and D.~Zhu, ``Decomposition and coordination methods in large scale
  optimization problems: The nondifferentiable case and the use of augmented
  lagrangians,'' \emph{Adv. in Large Scale Systems}, vol.~1, pp. 203--266,
  1984.

\bibitem{culioli1990}
J.-C. Culioli and G.~Cohen, ``Decomposition/coordination algorithms in
  stochastic optimization,'' \emph{SIAM Journal on Control and Optimization},
  vol.~28, no.~6, pp. 1372--1403, 1990.

\bibitem{lu2019}
H.~Lu, ``"{R}elative continuity" for non-lipschitz nonsmooth convex
  optimization using stochastic (or deterministic) mirror descent,''
  \emph{INFORMS Journal on Optimization}, vol.~1, no.~4, pp. 288--303, 2019.

\bibitem{zhou2020}
Y.~Zhou, V.~Sanches~Portella, M.~Schmidt, and N.~Harvey, ``Regret bounds
  without lipschitz continuity: online learning with relative-lipschitz
  losses,'' \emph{Advances in Neural Information Processing Systems}, vol.~33,
  pp. 15\,823--15\,833, 2020.

\bibitem{renegar2016}
J.~Renegar, ``"{E}fficient" subgradient methods for general convex
  optimization,'' \emph{SIAM Journal on Optimization}, vol.~26, no.~4, pp.
  2649--2676, 2016.

\bibitem{grimmer2018}
B.~Grimmer, ``Radial subgradient method,'' \emph{SIAM Journal on Optimization},
  vol.~28, no.~1, pp. 459--469, 2018.

\bibitem{grimmer2019}
B.~{Grimmer}, ``Convergence rates for deterministic and stochastic subgradient
  methods without lipschitz continuity,'' \emph{SIAM Journal on Optimization},
  vol.~29, no.~2, pp. 1350--1365, 2019.

\bibitem{shor2012minimization}
N.~Z. Shor, \emph{Minimization methods for non-differentiable functions}.\hskip
  1em plus 0.5em minus 0.4em\relax Springer Science \& Business Media, 2012,
  vol.~3.

\bibitem{Vial83}
J.-P. Vial, ``Strong and weak convexity of sets and functions,''
  \emph{Mathematics of Operations Research}, vol.~8, no.~2, pp. 231--259, 1983.

\bibitem{Rockafellar}
R.~T. Rockafellar and R.~J.-B. Wets, \emph{Variational analysis}.\hskip 1em
  plus 0.5em minus 0.4em\relax Springer Science \& Business Media, 2009, vol.
  317.

\bibitem{zhudenglizhao2021}
D.~Zhu, S.~Deng, M.~Li, and L.~Zhao, ``Level-set subdifferential error bounds
  and linear convergence of bregman proximal gradient method,'' in
  \emph{Journal of Optimization Theory and Applications}.\hskip 1em plus 0.5em
  minus 0.4em\relax Springer, 2021, pp. 1--30.

\bibitem{RS}
H.~Robbins and D.~Siegmund, ``A convergence theorem for non negative almost
  supermartingales and some applications,'' in \emph{Optimizing methods in
  statistics}.\hskip 1em plus 0.5em minus 0.4em\relax Elsevier, 1971, pp.
  233--257.

\bibitem{li2020nonconvex}
X.~Li, Z.~Zhu, A.~Man-Cho~So, and R.~Vidal, ``Nonconvex robust low-rank matrix
  recovery,'' \emph{SIAM Journal on Optimization}, vol.~30, no.~1, pp.
  660--686, 2020.

\bibitem{dontchev2009}
A.~L. Dontchev and R.~T. Rockafellar, \emph{Implicit functions and solution
  mappings}.\hskip 1em plus 0.5em minus 0.4em\relax Springer, 2009, vol. 543.

\bibitem{Zheng2014}
X.~Y. Zheng and K.~F. Ng, ``Metric subregularity of piecewise linear
  multifunctions and applications to piecewise linear multiobjective
  optimization,'' \emph{SIAM Journal on Optimization}, vol.~24, no.~1, pp.
  154--174, 2014.

\bibitem{necoara2019linear}
I.~Necoara, Y.~Nesterov, and F.~Glineur, ``Linear convergence of first order
  methods for non-strongly convex optimization,'' \emph{Mathematical
  Programming}, vol. 175, no.~1, pp. 69--107, 2019.

\bibitem{chang2011libsvm}
C.-C. Chang and C.-J. Lin, ``Libsvm: A library for support vector machines,''
  \emph{ACM transactions on intelligent systems and technology (TIST)}, vol.~2,
  no.~3, p.~27, 2011.

\bibitem{Nature}
M.~Liu, Q.~Li, L.~Liang, J.~Li, K.~Wang, J.~Li, M.~Lv, N.~Chen, H.~Song,
  J.~Lee, J.~Shi, L.~Wang, R.~Lal, and C.~Fan, ``Real-time visualization of
  clustering and intracellular transport of gold nanoparticles by correlative
  imaging,'' \emph{Nature Communications}, vol.~8, no.~1, pp. 1--10, 2017.

\end{thebibliography}
\end{document}